\numberwithin{equation}{section}
\newtheorem{theorem}{Theorem}[section]
\newtheorem{corollary}[theorem]{Corollary}
\newtheorem{lemma}[theorem]{Lemma}
\newtheorem{proposition}[theorem]{Proposition}
\theoremstyle{assumption}
\theoremstyle{definition}
\newtheorem{definition}[theorem]{Definition}
\theoremstyle{remark}
\newtheorem{remark}[theorem]{Remark}
\numberwithin{equation}{section}
\newcommand{\eps}{\varepsilon}
\newcommand{\norm}[1]{\Vert#1\Vert}
\newcommand{\inner}[1]{\left(#1\right)}
\newcommand{\comi}[1]{\left<#1\right>}
\newcommand{\normm}[1]{{ \vert\kern-0.25ex \vert\kern-0.25ex \vert #1
		\vert\kern-0.25ex \vert\kern-0.25ex \vert}}
 \newbox \abstractbox
\renewenvironment{abstract}{\global\setbox\abstractbox=\vbox\bgroup
 \hsize=\textwidth
  \vskip 1.2cm
  \noindent\unskip \textbf{Abstract.}
 }
 {
 \egroup}
\def\@settitle{%
  \bgroup
  \centering
  \vglue1cm
  \fontsize{12}{15}\fontseries{b}\selectfont
  \@title
  \vskip 20pt plus 6pt minus 8pt
  \egroup
}
\def\@setauthors{%
  \begingroup
  \trivlist
  \centering \bfseries
 \normalsize\@topsep30\p@\relax
  \advance\@topsep by -\baselineskip
  \item\relax
  \andify\authors
 {\rmfamily\authors}%
  \endtrivlist
  \endgroup
}
\def\@setaddresses{\par
  \nobreak \begingroup
\normalsize
  \def\author##1{\nobreak\addvspace\bigskipamount}%
  \def\\{\unskip, \ignorespaces}%
  \interlinepenalty\@M
  \def\address##1##2{\begingroup
    \par\addvspace\bigskipamount\noindent
    \@ifnotempty{##1}{(\ignorespaces##1\unskip) }%
    {\ignorespaces##2}\par\endgroup}%
  \def\curraddr##1##2{\begingroup
    \@ifnotempty{##2}{\nobreak\indent{\itshape Current address}%
      \@ifnotempty{##1}{, \ignorespaces##1\unskip}\/:\space
      ##2\par}\endgroup}%
  \def\email##1##2{\begingroup
    \@ifnotempty{##2}{\nobreak\noindent{\itshape E-mail address}%
      \@ifnotempty{##1}{, \ignorespaces##1\unskip}\/:
       ##2\par}\endgroup}%
   \def\urladdr##1##2{\begingroup
    \@ifnotempty{##2}{\nobreak\indent{\itshape URL}%
      \@ifnotempty{##1}{, \ignorespaces##1\unskip}\/:\space
      \ttfamily##2\par}\endgroup}%
  \addresses
  \endgroup
}
  \renewcommand\section{\@startsection{section}{1}{\z@}%
 {27pt plus 6pt minus 8pt}{14pt plus 6pt minus 8pt}
 {\center\normalfont\large\bfseries}}
 \def\subsection{\@startsection{subsection}{2}%
  \z@{.5\linespacing\@plus.7\linespacing}{.2\linespacing}%
  {\normalfont\bfseries}}
\def\subsubsection{\@startsection{subsubsection}{3}%
  \z@{.5\linespacing\@plus.7\linespacing}{-.5em}%
  {\normalfont\bfseries}}
\begin{document}
	
		\title[Global Sobolev Well-Posedness of the MHD Boundary
Layer Equations]{Global well-posedness of the MHD boundary layer equations in the  Sobolev Space}

	\author[W.-X. Li, Z. Xu and A. Yang]{Wei-Xi Li, Zhan Xu and  Anita  Yang}
	
	\address[W.-X. Li]{School of Mathematics and Statistics, \&  Hubei Key Laboratory of Computational Science,  Wuhan University 
	Wuhan 430072, China} 
	\email{wei-xi.li@whu.edu.cn}

\address[Z. Xu]{School of Mathematics and Statistics, Wuhan University,  
	Wuhan 430072,    China} \email{xuzhan@whu.edu.cn}
	
\address[A. Yang]{Department of Mathematics, The Chinese University of Hong Kong, Shatin, Hong Kong} \email{ayang@math.cuhk.edu.hk}

	\keywords{Global well-posedness, Sobolev spaces, MHD boundary layer equations} 
	\subjclass[2020]{35Q35, 76W05, 76D10}

	\begin{abstract}
 We study the two-dimensional MHD boundary layer equations.  For small perturbation around a tangential background magnetic field, we obtain   the global-in-time existence and uniqueness  of solutions in Sobolev spaces.  The proof relies on the novel  combination of  the well-explored  cancellation mechanism and the idea of linearly-good unknowns,  and   we use the former idea to deal with the top tangential derivatives and the latter one admitting fast decay rate to control lower-order  derivatives.    
\end{abstract}

	\maketitle
	
	 \tableofcontents
	
	\section{Introduction and main result}
Magnetohydrodynamic (MHD) is the study of the interaction between  electromagnetic fields and conducting fluids,  describing  the motion of conducting fluid under the influence of the  magnetic fields.  In this work we consider the   two-dimensional  (2D) case  and assume the fluid moves 
	in the half-space $\Omega$. 
Then  in the incompressible framework,  the governing equations read
\begin{equation}\label{mhdsys}
\left\{
	\begin{aligned}
		&\partial_t \bm  U +(\bm  U \cdot\nabla) \bm U-(\bm B \cdot\nabla) \bm B +\nabla   P -\frac{1}{\mathrm {Re}} \Delta \bm U  =0,\\
		&\partial_t \bm  B  -\nabla\times(\bm U\times\bm B) -\frac{1}{\mathrm{Rm}} \Delta \bm B=0,\\
		&\nabla\cdot \bm U=\nabla\cdot \bm B=0,\\
		&{\bm U}|_{t=0}={\bm U_{in}},\quad {\bm B}|_{t=0}={\bm B_{in}},
	\end{aligned}
\right.
\end{equation}
where ${\bm U}$ and $\bm B$ are the velocity and magnetic fields, respectively, and   Re and Rm stand for the hydrodynamic and magnetic Reynolds numbers, respectively.    Moreover, suppose  
system \eqref{mhdsys} is complemented   with  an uniform background magnetic field and  the no-slip boundary condition
    on the velocity  field,  saying 
 \begin{eqnarray*}
 	\bm U|_{\partial\Omega}= {\boldmath 0}, \quad \bm B |_{\partial\Omega}=\bm e,
 \end{eqnarray*}
for some constant vector $\bm e$. Formally,  as the Reynolds numbers   Re and Rm go to $+\infty$  we expect  system \eqref{mhdsys} will trend to the ideal  MHD system. However, in the case with physical boundaries,  the  mathematically rigorous justification of this process of high Reynolds limit    is  more complicated and interesting, due to the mismatched   boundary conditions between \eqref{mhdsys} and the limiting ideal system which lead  to the appearance of a boundary layer.  In the specific case when Re $\approx$ Rm, following the argument of Prandtl's ansatz,    G\'{e}rard-Varet  and Prestipino \cite{MR3657241} derived the following governing equations for the boundary layer:  
	\begin{equation}\label{eq1}
		\left\{
		\begin{aligned}
			&  \partial_t u+(u\partial_x+v\partial_y)u+  \partial_x p -\partial_y^2u
			=(b_1\partial_x  +b_2\partial_y)b_1,\\
			&  \partial_t b_1+(u\partial_x+v\partial_y)b_1-\partial_y^2b_1
			=(b_1\partial_x +b_2\partial_y)u, \\
			& \partial_xu+\partial_yv=0,\quad \partial_xb_1+\partial_yb_2=0, 
		\end{aligned}
		\right.
	\end{equation}
 which are posed on  the half-space $\mathbb R^2_{+}=\{(x,y)\in \mathbb R^2, y >0\}$, and complemented with the boundary condition 
 \begin{equation}\label{bc}
 	(u,v)|_{y=0}=(0,0)\  \textrm{ and } \ (b_1,b_2)|_{y=0}={\bm e}, \  \  \lim\limits_{y\to +\infty}(u,b_1)=\big (\alpha(t,x), \beta(t,x)\big ),
 \end{equation}
 and the initial condition 
 \begin{eqnarray}\label{ic}
 	(u,b_1)|_{t=0}=(u_{in},b_{in}).
 \end{eqnarray}
Here $(u,v)$ and $(b_1,b_2)$  are the velocity   and  the magnetic fields, respectively, and $(\alpha(t,x),\beta(t,x),p(t,x))$ are given functions of $(t,x),$  standing for the outflow of velocity, magnetic and pressure, which satisfy the Bernoulli's law:
 \begin{equation*}
		\left\{
		\begin{aligned}
&(\partial_t +\alpha\cdot \partial_x)\alpha+\partial_x p-\beta\partial_x\beta=0,\\
&(\partial_t +\alpha\cdot \partial_x)\beta-\beta\partial_x\alpha=0.
		\end{aligned}
		\right.
\end{equation*}
Without loss of generality,  we will consider a uniform outflow $(\alpha,\beta)=(0,1).$  Moreover,  we consider $(b_1,b_2)$ as a small perturbation around the uniform  background magnetic field $\bm e=(1,0)$. Then   
  using the notation that  
\begin{align*}
	(b_1, b_2)=(1+f,  g), 
\end{align*}
we may rewrite the initial-boundary problem  \eqref{eq1}-\eqref{ic}   as
	\begin{equation}
		\left\{
		\begin{aligned}
			&  \partial_t u+(u\partial_x+v\partial_y)u  -\partial_y^2u
			=(1+f) \partial_x f+g\partial_y  f,\\
			&  \partial_t   f+(u\partial_x+v\partial_y)  f-\partial_y^2  f
			=(1+f)\partial_x u+g\partial_yu, \\
			& \partial_xu+\partial_yv=0,\quad \partial_x  f+\partial_yg=0,\\
   			& (u,v,f,g)|_{y=0}=(0,0,0,0),\quad \lim\limits_{y\to +\infty}(u,  f)=(0,0),\\
      			& (u,  f)|_{t=0}=(u_{in},  f_{in}),\\
		\end{aligned}
		\right.
		\label{eq2}
	\end{equation}
where $ f_{in}= b_{in}-1$.
 This paper aims to investigate the global-in-time existence and uniqueness of solutions to the MHD layer system \eqref{eq2} in the Sobolev space.
 
 {\bf Notations.} Before  stating the main  result, we first list some notations to be used throughout the paper.  We will use  $\norm{\cdot}_{L^2}$ and $\inner{\cdot, \cdot}_{L^2}$ to denote the norm and inner product of  $L^2=L^2(\mathbb R_+^2)$   and use the notation   $\norm{\cdot}_{L_y^2}$ and $\inner{\cdot, \cdot}_{L_y^2}$  when the variable $y$ is specified. Similar notation  will be used for $L^\infty$.  Moreover, denote by  $L^p_{x}L^q_y=L^p(\mathbb R; L^q(\mathbb{R}_+))$  the classical Lebesgue space, and  similarly for the Sobolev space $H^p_{x}H^q_y.$ For  any given norm $\norm{\cdot}$ we use the convention that 
 \begin{equation*}
 	\norm{A}:=\Big(\sum_{1\leq j\leq k} \norm{A_j}^2\Big)^{1\over2} 
 \end{equation*}
 for vector-valued functions $A=(A_1,A_2,\cdots, A_k)$.

 We define  the weighted Lebesgue space with a given non-negative function $\omega$  by setting 
 \begin{equation*}
 	L_\omega^2:=\Big\{h: \mathbb{R}_{+}^2\to\mathbb{R}; \  \norm{h}_{L_\omega^2}:=\Big(\int_{\mathbb R_+^2} \omega  h^2dxdy\Big)^{1\over2}<+\infty\Big\}, 
 \end{equation*}
 which is equipped with the norm  $\norm{\cdot}_{L^2_\omega}$ and the inner product $\inner{\cdot, \cdot}_{L^2_\omega}$. 
More generally,  define 
\begin{equation}\label{hmkomega}
H^{m,k}_{\omega}:=\Big\{h: \mathbb{R}_{+}^2\to\mathbb{R}; \  \  \norm{h}_{H^{m,k}_{\omega}}:=\Big(\sum^m_{i=0}\sum^k_{j=0}\norm{\partial_x^{i}\partial_y^{j}h}^2_{L^2_{\omega}}\Big)^{1\over2}<+\infty\Big\}.
\end{equation}
In this paper we will work with  the following time-dependent weight function:
\begin{equation}\label{defmu}
 \mu:=\mu(t,y)=\exp\Big(\frac{y^2}{4\comi t}\Big),
\end{equation}
where here and below
\begin{equation*}
	\comi t:=1+t. 
\end{equation*}
More generally, we define  
\begin{equation}\label{mulambda}
	\mu_{\lambda}:=\mu_{\lambda}(t,y)=\exp\Big(\frac{\lambda y^2}{4\comi t}\Big),\quad 0\leq  \lambda\leq 1. 
\end{equation}
The parameter $\lambda$ in \eqref{mulambda} is allowed to be negative so that we have 
\begin{equation}\label{integr}
\forall\  \lambda<0,  \ \forall\ t\geq 0, \quad 	\norm{\mu_{\lambda} }_{L_y^2}\leq C_\lambda  \comi t^{1\over 4}
\end{equation}
with constant $C_\lambda$   depending only on $\lambda.$

We will solve  system  \eqref{eq2} in the weighted Sobolev space $H^{8,0}_\mu\cap H^{5,1}_{\mu_\lambda}$ with any $0\leq \lambda<1$.  So the initial data $u_{in}, f_{in}$ in \eqref{eq2} satisfy the compatibility condition that
\begin{equation}\label{comcon}
	 (u_{in},f_{in})|_{y=0}=\lim\limits_{y\to +\infty}	 (u_{in},f_{in}) =(0,0).
\end{equation}

	\begin{theorem}\label{th1}
 Assume   the initial data  in \eqref{eq2} satisfy that   $ u_{in},f_{in} \in  H^{8,0}_{\mu_{in}}\cap H^{5,1}_{\mu_{in}}$ with
 \begin{equation*}
 	\mu_{in}:=\mu(0,y)=\exp\Big (\frac{y^2}{4}\Big).
 \end{equation*}
 Moreover suppose that the compatibility condition \eqref{comcon}  is fulfilled and  that 
\begin{align}\label{eq4}
  \forall\ x\in\mathbb R,\quad   \int^{+\infty}_0 u_{in}(x, y)dy=0 \ \mathrm{and} \ \int^{+\infty}_0   f_{in}(x,y)dy=0.
\end{align}
Let $0< \delta\leq \frac{1}{25}$ be an arbitrarily given number. Then for any $0\leq \lambda<1$, there exists a small constant $\eps_0>0$ and  a constant   $C>0,$ both depending on $\lambda$ and $\delta$,  such that  if
\begin{equation}\label{smallassumption}
    \norm{(u_{in},  f_{in})}_{H_{\mu_{in}}^{8,0}}+ \norm{(u_{in},  f_{in})}_{H_{\mu_{in}}^{5,1}}\leq \varepsilon_0,
\end{equation}
then the MHD boundary layer system \eqref{eq2} admits a unique global-in-time solution $ u, f \in L^\infty\big([0,+\infty[\ ; \  H^{8,0}_{\mu} \cap H^{5,1}_{\mu_{\lambda}}\big)$,  satisfying that
\begin{align*} 
  &\sup_{t\geq 0}    \comi t^{\frac{1-\delta}{4}}\norm{(u, f)}_{H^{8,0}_{\mu}}+   \Big ( \int_0^{+\infty}  \comi t^{\frac{1-\delta}{2}}\norm{(\partial_yu, \partial_yf)}_{H^{8,0}_{\mu}}^2 dt\Big)^{1\over2} \leq C\varepsilon_0,
\end{align*}
and that
\begin{multline*}
 \sup_{t\geq 0}  \sum_{0\leq k\leq 1} \comi t^{\frac{5-\delta}{4}+\frac{k}{2}}\norm{(\partial_y^ku, \partial_y^kf)}_{H^{5,0}_{\mu_{\lambda}}} \\
  +\bigg( \sum_{0\leq k\leq 1}  \int_0^{+\infty} \comi t^{\frac{5-\delta}{2}+k}\norm{(\partial_y^{k+1}u, \partial_y^{k+1}f)}_{H^{5,0}_{\mu_{\lambda}}}^2\, dt\bigg)^{1\over2} \leq C\varepsilon_0.   
\end{multline*}
Recall $\mu,\mu_\lambda$ are defined in \eqref{defmu} and \eqref{mulambda}, respectively.  
 \end{theorem}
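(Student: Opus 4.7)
The approach is a continuity/bootstrap argument built on two levels of weighted energy estimates, combined with a local existence result obtained by standard parabolic regularization. Given a local solution on $[0,T]$, I would introduce two energy functionals corresponding to the two norms in the statement: an ``outer'' functional
\begin{equation*}
\mathcal{E}_1(T):=\sup_{0\le t\le T}\comi t^{(1-\delta)/2}\norm{(u,f)}_{H^{8,0}_\mu}^2+\int_0^T\comi t^{(1-\delta)/2}\norm{\partial_y(u,f)}_{H^{8,0}_\mu}^2\,dt,
\end{equation*}
and an ``inner'' functional $\mathcal{E}_2(T)$ tracking the faster-decaying $H^{5,1}_{\mu_\lambda}$ norm with its time weights. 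The task is to prove $\mathcal{E}_1(T)+\mathcal{E}_2(T)\lesssim\eps_0^2$ under \eqref{smallassumption}, which together with local existence yields the global solution by continuation.

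The top-order estimate driving $\mathcal{E}_1$ is where the \emph{cancellation mechanism} enters. Applying $\partial_x^m$ with $m\le 8$ to both equations, the loss-of-derivative term is $\partial_x^m v\cdot\partial_y u+\partial_x^m g\cdot\partial_y f$ coming from vertical advection, since incompressibility gives $\partial_x^m v=-\int_0^y\partial_x^{m+1}u\,dy'$ which costs one extra $x$-derivative. The key structural observation, characteristic of MHD layers with a non-degenerate tangential background field, is that this loss in the $u$-equation is exactly cancelled by the analogous loss in the $f$-equation when one pairs $\partial_x^m u$ with $\partial_x^m f$ against the magnetic coupling $(1+f)\partial_x f$, $(1+f)\partial_x u$ and uses $\partial_x f+\partial_y g=0$. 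I would close the $L^2_\mu$ energy estimate using the identity $\partial_t\mu=-\tfrac{y^2}{4\comi t^2}\mu$, which contributes a nonnegative weighted term absorbing commutators between $\partial_x^m$ and $\mu$, and using the parabolic dissipation $\norm{\partial_y\partial_x^m(u,f)}_{L^2_\mu}^2$ to soak up the lower-order remainders.

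The bound on $\mathcal{E}_2$ requires the \emph{linearly-good unknowns}. Motivated by the zero-integral condition \eqref{eq4}, which is propagated by the flow thanks to the divergence-free structure and no-slip, I would introduce the antiderivatives
\begin{equation*}
U(t,x,y):=-\int_y^{+\infty}u(t,x,y')\,dy',\qquad F(t,x,y):=-\int_y^{+\infty}f(t,x,y')\,dy',
\end{equation*}
which vanish at both ends of the half-line and satisfy transport–diffusion equations whose heat-type decay improves by an extra factor $\comi t^{-1/2}$ in $L^2$ compared with $(u,f)$. Writing down the $\partial_x^j\partial_y^k$ equations for $(U,F)$ with $j\le 5,\ k\le 1$, working in the lighter weight $\mu_\lambda$ with $\lambda<1$ so that the weight-generated term $\tfrac{\lambda y^2}{4\comi t^2}\mu_\lambda$ is strictly dominated by the dissipation coming from $-\partial_y^2$ (Hardy-type inequality), and inserting the time factor $\comi t^{(5-\delta)/4+k/2}$ yields the announced rates after careful treatment of the quadratic nonlinear forcing.

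The principal obstacle is the intertwining of the two estimates: the cancellation-based bound on $\mathcal{E}_1$ requires $L^\infty$ control of $(u,f,\partial_y u,\partial_y f)$, supplied by $\mathcal{E}_2$ via Sobolev embedding on $\mathbb{R}^2_+$, while the fast-decay bound on $\mathcal{E}_2$ requires control of higher $x$-derivatives appearing in cubic remainders, supplied by $\mathcal{E}_1$. Closing the bootstrap reduces to checking that the product of decay rates $\comi t^{-(1-\delta)/4}\cdot\comi t^{-(5-\delta)/4}=\comi t^{-(3-\delta)/2}$ is time-integrable, which fails at $\delta=0$ and holds with margin for any $\delta>0$; the explicit threshold $\delta\le 1/25$ arises from the cumulative losses in all interpolation and Hölder steps that feed the two scales into each other. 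A secondary technical difficulty is the treatment of boundary integrals at $y=0$ after integration by parts, which must be eliminated by evaluating the bulk equations on the wall and using the no-slip together with $\partial_x^m(1+f)\partial_x f|_{y=0}$ being expressible through tangential data alone.
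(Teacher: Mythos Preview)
Your proposal misidentifies both of the two key mechanisms, and neither would close as stated.

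\textbf{The cancellation mechanism.} The bad terms after applying $\partial_x^m$ to the $u$-equation are $-(\partial_x^m v)\partial_y u + (\partial_x^m g)\partial_y f$, and in the $f$-equation $-(\partial_x^m v)\partial_y f + (\partial_x^m g)\partial_y u$. These do \emph{not} cancel upon pairing $\partial_x^m u$ with $\partial_x^m f$ in the energy; the four resulting integrals have no symmetry that kills them. What the paper actually does is introduce the nonlinearly corrected unknowns
\[
u_m=\partial_x^m u+\frac{\partial_y u}{1+f}\,\partial_x^{m-1}g,\qquad
f_m=\partial_x^m f+\frac{\partial_y f}{1+f}\,\partial_x^{m-1}g,
\]
and derive an evolution equation for $\partial_x^{m-1}g$ whose right-hand side contains $(1+f)\partial_x^m v$. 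Multiplying that equation by $\frac{\partial_y u}{1+f}$ produces exactly $(\partial_y u)\partial_x^m v$, which cancels the worst term in the $\partial_x^m u$ equation \emph{pointwise, before any integration}. The same trick with $\frac{\partial_y f}{1+f}$ handles $f_m$. Without this correction the top-order Sobolev estimate does not close.

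\textbf{The linearly-good unknowns.} The antiderivatives $U=-\int_y^\infty u$, $F=-\int_y^\infty f$ satisfy the same transport--diffusion structure as $(u,f)$ with no additional damping, so their $L^2_\mu$ norm decays at the \emph{same} rate $\comi t^{-(1-\delta)/4}$, not faster; recovering $u=\partial_y U$ gains at most $\comi t^{-1/2}$, which falls short of the $\comi t^{-(5-\delta)/4}$ rate in the theorem by a full power $\comi t^{-1/2}$. The paper instead uses the Paicu--Zhang unknowns
\[
\mathcal{U}=u+\frac{y}{2\comi t}\int_0^y u\,d\tilde y,\qquad
\mathcal{F}=f+\frac{y}{2\comi t}\int_0^y f\,d\tilde y,
\]
whose evolution equations carry an \emph{explicit} damping term $\frac{1}{\comi t}\mathcal{U}$ (resp.\ $\frac{1}{\comi t}\mathcal{F}$). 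This damping is what converts the baseline $\comi t^{-(1-\delta)/4}$ decay into $\comi t^{-(5-\delta)/4}$, and it is the whole point of the construction. Your $U,F$ do not produce such a term.

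A minor point: your integrability check is backwards. The quantity $\comi t^{-(3-\delta)/2}$ is integrable for every $\delta<1$, including $\delta=0$. The constraint $\delta>0$ in the paper comes from the splitting $2\norm{\partial_y\varphi}_{L^2_\mu}^2=\delta\norm{\partial_y\varphi}_{L^2_\mu}^2+(2-\delta)\norm{\partial_y\varphi}_{L^2_\mu}^2$ in the basic weighted energy identity: one needs $\delta>0$ to retain a dissipation term after trading the rest for time decay via the Poincar\'e-type inequality $\frac{1}{2\comi t}\norm{\varphi}_{L^2_\mu}^2\le\norm{\partial_y\varphi}_{L^2_\mu}^2$.
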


\begin{remark}
	Without the smallness assumption \eqref{smallassumption}, the MHD boundary layer system \eqref{eq2} is usually ill-posed in the Sobolev space, seeing \cite[Theorem 1.2]{MR4102162}. 
\end{remark}

\begin{remark}
	As to be seen below,  the proof of Theorem \ref{th1} combines the cancellation mechanism in \cite{MR4342301}  and the idea of linearly-good unknowns in \cite{MR4271962}. Precisely,  we will apply the cancellation mechanism for the  treatment of  top tangential derivatives and meanwhile the linearly-good unknowns  idea when dealing with lower-order derivatives. The former technique is used to overcome the loss of derivatives and the advantage of  the latter one is that  the Sobolev norm of the  new good unknowns decays faster than  that of  the velocity and magnetic fields themselves.  
 More details can be found in Subsection \ref{subsec:method} below. 
 \end{remark}

 We briefly review  some related  works on the  Prandtl and MHD boundary layer equations.  
In the absence of magnetic fields,
the MHD layer   system \eqref{eq2}  is  reduced to  the  classical Prandtl equation which was derived by L. Prandtl in 1904. 
The mathematical study on   the Prandtl boundary layer has a  long history, and   the well/ill-posedness property   is  well-explored in various function spaces,  seeing e.g. \cite{MR3327535, MR3795028,  MR3925144,  MR1476316, MR2601044, MR2849481,  MR3461362, MR3284569, MR2975371,  MR3590519, MR3493958,  MR2020656, MR4465902, MR3710703,MR3464051,MR4293727} and the references therein.  To explore the well-posedness theory of the Prandtl and MHD boundary layer equations,  the main difficulty lies in the loss of derivatives which occurs in the nonlocal terms
\begin{equation*}
	v(t,x,y)=-\int_0^y \partial_x u(t,x,\tilde y) d\tilde y\ \textrm{ and }\  g(t,x,y)=-\int_0^y \partial_x f(t,x,\tilde y) d\tilde y.
\end{equation*}
Compared with the Prandtl equations,  the MHD layer system \eqref{eq2} has additional  degeneracy property caused by the normal component $g$ of magnetic fields.    In order to overcome the loss of derivatives,   there are basically two main settings for the well-posedness theory of the Prandtl and  MHD boundary layer equations,  based on  whether or not  the structural assumption is imposed.   One refers to the analytic or more general Gevrey spaces  without any structural assumption on the initial data,  and  another one  refers to function spaces
with finite order of regularity under some  structural conditions  in two space dimensions. 

For the 2D Prandtl equation,  Oleinik's monotonicity condition plays a crucial role when investigating the well-posedness in spaces with finite order of regularity,  seeing  e.g. Oleinik-Samokhin's book \cite{MR1697762} for the local existence and uniqueness of classical solutions  by using   the Crocco transformation.  Using a new idea of
   cancellation mechanism,  two   groups  Alexandre-Wang-Xu-Yang \cite{MR3327535} and  Masmoudi-Wong 
\cite{MR3385340}  independently proved the local Sobolev well-posedness 
  through the energy method.  Hence, the loss of  tangential  derivatives can be overcame by using either Crocco transformation or cancellation mechanism under the monotonicity condition. For  the MHD boundary layer equations, thanks to the stabilizing effect  of  magnetic fields,     the Sobolev well-posedness in  the two-dimensional  case  is also available  even when the velocity field does not satisfy Oleinik's monotonicity assumption;  In fact,      inspired by  the idea of  cancellation   mechanism  initiated by \cite{MR3327535,MR3385340},    Liu-Xie-Yang \cite{MR3882222} established the local Sobolev well-posedness    under the hypothesis that the tangential  component of magnetic  fields does not degenerate. 
  
   Without any structural assumption on initial data, motived by the abstract Cauchy-Kowalewski theorem  it is natural to work with  the analytic functions space to overcome  the loss of one-order derivative.  The local well-posedness theory in the analytic functions setting was proved by Sammartino-Caflisch \cite{MR1617542}. Furthermore, by virtue of the intrinsic cancellation structure of the Prandtl equations, 
   the analyticity regularity may be  relaxed to general  Gevrey class. In fact,  based on a new understanding of the cancellation mechanism,  Dietert and G\'{e}rard-Varet \cite{MR3925144} and   Li-Masmoudi-Yang \cite{MR4465902} proved  in the 2D and 3D cases, respectively,   that the Prandtl equations are indeed local well-posed  in Gevrey 2 space.   
   Note that  the Gevrey index $2$ in \cite{MR3925144,MR4465902}  is optimal  in view of the ill-posedness theory  in the work \cite{MR2601044} of  G\'{e}rard-Varet and Dormy.  Hence   the optimality of  the well/ill-posedness theory  for the 2D and 3D Prandtl equations without any structural  assumption is well justified.  However, for the counterpart of the MHD boundary layer system, the Gevrey well-posedness is more complicated because of  the additional loss of derivatives in   magnetic fields.  For this, so far only Gevrey index up to $\frac32$  rather than $2$ is achieved  for  the local well-posedness for the 2D and 3D MHD boundary systems(cf. \cite{MR4270479}).  Compared with the Prandtl counterparts \cite{MR3925144,MR4465902}, this Gevrey index $\frac 32$ is expected not to be critical  for the well-posedness theory of the MHD boundary layer equations.

The aforementioned   works are mainly about the local existence.   Much less is known for the global well-posedness theory.   Here we  mention  Xin-Zhang's work \cite{MR2020656} on the global existence of weak solutions to the 2D Prandtl equations  under an additional favorable pressure condition.  On the other hand,  without the monotonicity condition, boundary layer separation is well expected (see e.g. \cite{MR4028516,MR4289038} for the justification of the Goldstein singularity)  and there are  lots of works on the instability phenomena.  Interested readers may refer to the works of E-Engquist \cite{MR1476316} about the construction of
blowup solutions for large  data (see  also Collot-Ghoul-Masmoudi \cite{MR4264948} for the formulation of singularity),  Grenier
\cite{MR1761409} on the unstable Euler shear flow that yields instability of the  Prandtl equations, and   \cite{MR2601044, MR2952715, MR3464020, MR2849481} about the instability around
  shear flows.   
     
    However,  for small and analytic data  we may    expect the global  solutions to the Prandtl and MHD boundary layer equations even 
   when the structural assumptions  are violated.  In fact,   
inspired by the  nonlinearly-good unknown   idea  in Masmoudi-Wong' work \cite{MR3385340},  Ignatova-Vicol \cite{MR3461362} introduced  a  new linearly-good  unknown with  decay rate faster  than that of the velocity itself and then take advantage of the better decay rate to construct  the almost global analytic solutions for the 2D Prandtl equations, after the earlier work \cite{MR3464051} of Zhang-Zhang  on the long time behavior.    This idea of linearly-good unknown  is  further developed by Paicu-Zhang \cite{MR4271962},  where the global analytic solution  is established  by  introducing another new linearly-good unknown with  the analytic norm  decaying  fast enough.   We refer to Wang-Wang-Zhang \cite{MR4701733} for the further generalization of   \cite{MR4271962} to the Gevrey setting,  and  \cite{MR4293727,MR4213671} for the  application of the linearly-good unknown idea to the global  well-posedness of the MHD  
 boundary layer equations in the analytic setting.  Note the works mentioned above are limited to the analytic or Gevrey setting and the global existence in Sobolev spaces remains open. The purpose of this work is to study the global well-posedness of the MHD boundary layer system \eqref{eq2} in the Sobolev setting.

The organization of the paper is as follow. In Section \ref{sec:apri} we state the key {\it a priori} estimate for proving Theorem \ref{th1}  and     
discuss the idea in the proof of the {\it a priori} estimate. Sections \ref{SecP}-\ref{SecE2} are devoted to proving the {\it a priori} estimate. In Appendices  \ref{appendix1} and \ref{appumfm}  we present the derivation of the evolution equations  obeyed by the auxiliary functions introduced in this text.  

\section{\emph{A priori} estimate}\label{sec:apri}
The key part for proving Theorem \ref{th1} is 
 to derive     {\it a priori} energy estimate for system \eqref{eq2}.   Then  the global-in-time existence and uniqueness   will follow  from  a standard  regularization procedure. 
  Hence, we only present in detail the proof of  the   {\it a priori} estimate stated in Subsection \ref{subsec:method}  and omit the  regularization procedure  for brevity.
  
\subsection{Cancellation mechanism and    good unknowns}

  Because of the loss of tangential derivatives  in   nonlocal terms, the Prandtl   and  MHD boundary layer equations  are usually ill-posed in the Sobolev setting (cf. \cite{MR2601044,MR4102162} for instance).  To overcome the degeneracy caused by the absence of the horizontal diffusion,     two groups Alexandre-Wang-Xu-Yang \cite{MR3327535} and  Masmoudi-Wong 
\cite{MR3385340} introduced independently the following nonlinearly-good unknown for the Prandtl equations: 
\begin{equation}\label{s}
	\Phi:= \partial_yu-\frac{\partial_y^2u}{\partial_yu}u=(\partial_yu)\partial_y\Big(\frac{u}{\partial_yu}\big),
	 \end{equation}
 provided Oleinik's monotonicity condition is fulfilled (i.e., $\partial_yu\neq 0$). Note  the quantity $\Phi$ above behaves better than $u$ and $\partial_yu$,  and in fact   $\Phi$   satisfies  an evolution equation where the terms involving the loss of tangential derivatives  are   cancelled  out.   This  advantage, with the help of Hardy's inequality, enables  to conclude the local-in-time well-posedness of the 2D Prandtl equation in Sobolev spaces  (cf. \cite{MR3327535, MR3385340} for more details). Inspired  by this cancellation idea,  under the hypothesis that  the tangential component of magnetic field does not degenerate,   Liu-Xie-Yang \cite{MR3882222} proved the local well-posedness for the 2D MHD boundary layer system, basing on   the 
 observation of  a new cancellation mechanism. This justifies the stabilizing effect of magnetic fields.    
 
 The idea in \cite{MR3385340}   was further developed by   Ignatova-Vicol \cite{MR3461362},  where the authors introduced  a new linearly-good unknown of the following form:
\begin{equation}\label{iv}
	\Psi:=\partial_yu-\frac{\partial_y^2\phi(t,y)}{\partial_y\phi(t,y)}  u=(\partial_y\phi)  \partial_y \Big(\frac{u}{\partial_y\phi}\Big), 
\end{equation}  
with $\phi(t,y)$ well chosen  function  such that 
$-\frac{\partial_y^2\phi(t,y)}{\partial_y\phi(t,y)}$ is the heat self-similar variable $\frac{y}{2\comi t}$ (e.g. $\partial_y\phi=\mu$  with $\mu $  defined in \eqref{defmu}).  Note that   Oleinik's monotonicity condition does not hold anymore in \cite{MR3461362},  so that in order to overcome the one-order loss of tangential derivative, the strong  tangential analyticity is required for initial data. 
By a new observation that  the analytic norm of the  good unknown $\Psi$ in \eqref{iv} decays faster than that of  $u$ itself,  Ignatova-Vicol \cite{MR3461362} established the almost global-in-time   solution to the 2D Prandtl equation  in the space of  analytic functions. Motivated by  $\Psi$ in \eqref{iv},    Paicu-Zhang \cite{MR4271962}  worked with another new  linearly-good unknown:
\begin{equation}\label{gu}
	G=u+\frac{y}{2\comi t}\varphi \  \textrm{ with }\  \partial_y\varphi= u. 
\end{equation}
 Compared with   the quantity $\Psi$  in \eqref{iv},  the analytical norm of $\partial_yG$ decays almost like $\comi t^{-{7\over4}}$,  faster than the decay rate   of   the analytical norm of $\Psi $ which is almost like $\comi t^{-{5\over4}}$. 
Basing on this  new  decay rate,    Paicu-Zhang \cite{MR4271962}  established the global-in-time existence  of solutions to the 2D Prandtl equation with small analytic data. 
 
 \subsection{Methodology and statement of \emph{a priori} estimate}\label{subsec:method}
This work is strongly inspired by the idea of cancellation mechanism and good unknowns   recalled  in the previous part.  On one side, the cancellation mechanism works well for the Sobolev well-posedness,  but only the local-in-time existence  may be expected. On the other side, one may get the global-in-time  well-posedness using   the idea of linearly-good unknowns, but  ask strong analytic or Gevrey regularity for initial data. Rather than the sole use of the cancellation mechanism or the linearly-good unknown idea in the aforementioned works,  we will combine the two techniques in this work.
 
  To investigate the Sobolev well-posedness of    system \eqref{eq2}, 
the main difficulty arises from
 the loss of   tangential derivatives in the terms $v$ and $g$.   To overcome this degeneracy, we adopt the cancellation mechanism observed in \cite{MR4342301}, which is inspired by the earlier work \cite{MR3882222}.  Precisely, define 
 \begin{equation}\label{eq2.1}
    u_m:=
    \left\{
    \begin{aligned}
    &	 \partial_x^mu+\frac{\partial_yu}{1+f}\partial_x^{m-1}g, \quad \textrm{if } \ m\geq 1,\\
    & u,\quad \textrm{if } \ m=0,
    \end{aligned}
    \right.
\end{equation}
and 
 \begin{equation}\label{eqfm}
   f_m:=
    \left\{
    \begin{aligned}
    &	 \partial_x^m f +\frac{\partial_y f }{1+f}\partial_x^{m-1}g, \quad \textrm{if } \ m\geq 1,\\
    & f,\quad \textrm{if } \ m=0.
    \end{aligned}
    \right.
\end{equation}
Similar to the nonlinearly-good unknown $\Phi$ in \eqref{s},  in the evolution equations  of  $u_m$ and $f_m$  above,  the terms involving the loss of derivatives are cancelled out, that is (see Appendix \ref{appumfm} for more detail),
\begin{equation}\label{equmfm}
	\left\{
    \begin{aligned}
&\inner{\partial_t+u\partial_x+v\partial_y-\partial_y^2}u_m-\big((1+f)\partial_x+g\partial_y\big)f_m =l.o.t.,\\
&\inner{\partial_t+u\partial_x+v\partial_y-\partial_y^2}f_m-\big((1+f)\partial_x+g\partial_y\big)u_m =l.o.t.,
\end{aligned}
\right.
\end{equation}
 where $l.o.t.$    refers to lower-order terms.   This enables to perform energy estimates in the Sobolev setting for the above system.  On the other hand, the decay rate of the Sobolev norms of $u_m$ and $f_m$ is not fast enough to ensure the global-in-time existence.  For this we introduce  the good unknown in \eqref{gu} and define two  linearly-good unknowns   $\mathcal{U}$ and $\mathcal{F}$ by setting   
\begin{equation}\label{eq3+}
    \mathcal{U}(t,x,y):=u(t,x,y)+\frac{y}{2\comi t}\int^{y}_0 u(t,x, \tilde y)d\tilde y,\quad     \mathcal{F}:= f +\frac{y}{2\comi t}\int^{y}_0 f d\tilde y. 
\end{equation} 
Applying $\partial_y$ to the equations above  yields that   \begin{align*}
 \int_0^y ud\tilde y =2\comi t \big(\partial_y\mathcal{U}-\partial_yu\big)-yu,\quad    \int_0^y fd\tilde y =2\comi t \big(\partial_y\mathcal{F}-\partial_yf\big)-yf. 
\end{align*}
Then if  the terms on the right-hand side trend to $0$ as $y$ going to $+\infty$, we have 
 \begin{equation}\label{zeromean}
 \int^{+\infty}_0 udy=0  \  \textrm{ and } \   \int^{+\infty}_0 fdy=0.
 \end{equation}
Thus we may  rewrite $\mathcal{U}$ and $\mathcal F$ in \eqref{eq3+} as 
 \begin{align}\label{eq3}
    \mathcal{U}=u-\frac{y}{2\comi t}\int^{+\infty}_yud\tilde y,\quad     \mathcal{F}= f -\frac{y}{2\comi t}\int^{+\infty}_y f d\tilde y. 
\end{align} 
Different from system \eqref{equmfm},    there is no cancellation mechanism in the evolution equations solved by  the top tangential derivatives  of  $\mathcal U$ and $\mathcal F$,  so that the loss of tangential derivatives may occur.  However,  we can perform Sobolev estimates for the lower order derivatives of  $\mathcal U$ and $\mathcal F.$  When  dealing with the quadratic nonlinear terms,    we will  control top tangential derivatives of $u$ and $ f$ in terms of $u_m$ and $f_m$ (see Lemma \ref{lemu8} below),  and   the lower order  derivatives of $u $ and $ f$ by the $H^{5,1}_\mu$-norms of  $\mathcal U$ and  $ \mathcal F$ (see Lemma \ref{lemu0} below).   Roughly speaking, $u_m$ and $f_m$ behave  as the heat equation so that the decay rate in the $L^2_\mu$ space is almost like $-\frac{1}{4}$ order with respect to time (see Lemma \ref{techlema} below). On the other hand,  $\mathcal U$ and $\mathcal F$ behave  as a heat equation with an additional damping term $\frac{1}{\comi t}$, which yields that  the decay rate of the $L^2_\mu$ of $\mathcal U$ and $\mathcal F$ is almost like $-\frac{5}{4}$ order  (see Remark \ref{rmkest} below).  This motives us to define the energy   as follows. 

\begin{definition}\label{defed}
	Let $\delta$ be an   arbitrarily fixed number, 
satisfying that
\begin{equation*}
	 0<\delta\leq  \frac{1}{25}.
\end{equation*}
 We   
define  the energy $\mathcal E_\delta=\mathcal E_\delta(t) $  by
\begin{equation}\label{edelta}
\mathcal E_\delta:=  \sum_{m\leq 8}\comi t^{\frac{1-\delta}{2}}\norm{(u_m,f_m)}^2_{L^2_{\mu}}+ \sum_{0\leq k\leq 1}\Big(\frac{\delta}{2} \Big)^{k} \comi t^{\frac{5-\delta}{2}+k} \norm{(\partial_y^k\mathcal{U},\partial_y^k\mathcal{F})}^2_{H^{5,0}_{\mu}}.
\end{equation} 
Accordingly, define  $\mathcal D_\delta=\mathcal D_\delta(t)$ by 
\begin{equation}\label{ddelta}
\begin{aligned}
	\mathcal D_\delta:= &\sum_{m\leq 8}\delta\comi t^{\frac{1-\delta}{2}}\norm{(\partial_yu_m, \partial_y f_m)}^2_{L^2_{\mu}} \\
 &\qquad\qquad+\sum_{0\leq k\leq 1}\Big(\frac{\delta}{2} \Big)^{k+1} \comi t^{\frac{5-\delta}{2}+k} \norm{(\partial_y^{k+1}\mathcal{U},\partial_y^{k+1}\mathcal{F})}^2_{H^{5,0}_{\mu}}.
\end{aligned}
\end{equation}
Recall $u_m,f_m$ and $\mathcal U,\mathcal F$ are defined in \eqref{eq2.1}, \eqref{eqfm} and
\eqref{eq3+}, and the weighted Sobolev spaces are defined  in \eqref{hmkomega}.
\end{definition}

\begin{theorem}[A priori estimate]\label{thm:priori}
Let the initial data $u_{in}$ and $f_{in}$  in \eqref{eq2} satisfy   assumption \eqref{eq4} and the compatibility condition \eqref{comcon}.  Then there exists a small constant $\varepsilon>0$    such that  if 
 \begin{equation}
\mathcal E_\delta(0)  \leq \varepsilon^2
\label{eq2.3}
\end{equation}
 and  if  $  u, f\in L^\infty([0,+\infty[;\  H^{8,0}_\mu\cap H^{5,1}_{\mu_\lambda}) $,  $0\leq \lambda<1,$     is any global solution to    system \eqref{eq2}  satisfying that 
 \begin{equation*} 
 \forall\ t>0,\quad \mathcal E_\delta(t)+\frac{1}{2}\int^{t}_{0}\mathcal D_\delta(s)ds\leq 2\varepsilon^2,
\end{equation*}
then  
\begin{equation}
 \forall\ t>0,\quad \mathcal E_\delta(t)+\frac{1}{2}\int^{t}_{0}\mathcal D_\delta(s)ds\leq \varepsilon^2.
\label{eq2.4}
\end{equation}
\end{theorem}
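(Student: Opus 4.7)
The plan is to establish a differential inequality of the form $\frac{d}{dt}\mathcal E_\delta + \mathcal D_\delta \lesssim \sqrt{\mathcal E_\delta}\,\mathcal D_\delta + (\text{lower order})$ and then integrate in time, using the bootstrap hypothesis $\mathcal E_\delta+\frac12\int_0^t\mathcal D_\delta\,ds \le 2\varepsilon^2$ to absorb the cubic remainder once $\varepsilon$ is taken small enough. The energy $\mathcal E_\delta$ is split into two pieces that we treat by genuinely different mechanisms: the top-order piece $\sum_{m\le 8}\langle t\rangle^{(1-\delta)/2}\|(u_m,f_m)\|_{L^2_\mu}^2$, handled through the cancellation structure \eqref{equmfm}, and the lower-order piece $\sum_{k\le1}(\delta/2)^k\langle t\rangle^{(5-\delta)/2+k}\|(\partial_y^k\mathcal U,\partial_y^k\mathcal F)\|_{H^{5,0}_\mu}^2$, handled through the faster decay of the linearly-good unknowns $\mathcal U,\mathcal F$.

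First I would work with the cancellation unknowns. Testing \eqref{equmfm} against $\mu u_m$ and $\mu f_m$ respectively and summing in $m\le 8$, the crucial cross-terms generated by $(1+f)\partial_x$ and $g\partial_y$ cancel between the two equations (this is exactly the point of introducing $u_m,f_m$). The weight identity $\partial_t\mu+\partial_y^2\mu = (\partial_y\mu)^2/\mu$ produces, together with the diffusion term, the positive contribution $\|(\partial_yu_m,\partial_yf_m)\|_{L^2_\mu}^2$. Combining with $\frac{d}{dt}\langle t\rangle^{(1-\delta)/2}$, which adds $-(1-\delta)/2\cdot\langle t\rangle^{-1}\|(u_m,f_m)\|_{L^2_\mu}^2$ times the weight, and using a weighted Hardy inequality $\|\mu h/\langle t\rangle^{1/2}\|_{L^2}\lesssim \|\partial_y(\mu h)\|_{L^2}$ to dominate the latter, we obtain $\delta\langle t\rangle^{(1-\delta)/2}\|(\partial_yu_m,\partial_yf_m)\|_{L^2_\mu}^2$ on the dissipation side, i.e.\ exactly the first piece of $\mathcal D_\delta$. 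The lower-order terms in \eqref{equmfm} and the commutator of $\partial_x^m$ with the convection are handled by splitting the derivatives: top tangential derivatives of $u,f$ are rewritten in terms of $u_m,f_m$ (cf.\ the forthcoming Lemma \ref{lemu8}), while intermediate derivatives are controlled by $\|(\mathcal U,\mathcal F)\|_{H^{5,1}_\mu}$ via Lemma \ref{lemu0}, and the normal integrals defining $v,g$ are handled by Hardy inequalities in $y$.

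Next I would handle the good-unknown part. From \eqref{eq2} one derives, using $\partial_y\mathcal U = \partial_y u - \frac{1}{2\langle t\rangle}\int_y^{+\infty}u\,d\tilde y$ (legitimate thanks to \eqref{eq4} and the fact that this property is preserved in time), an evolution equation of the form $(\partial_t+u\partial_x+v\partial_y-\partial_y^2)\mathcal U + \frac{1}{\langle t\rangle}\mathcal U = (1+f)\partial_x\mathcal F+g\partial_y\mathcal F + R$ with a remainder $R$ made of lower-order, integrable-in-$y$ contributions, and similarly for $\mathcal F$. The new damping term $\mathcal U/\langle t\rangle$ is the reason why $\|\mathcal U\|_{L^2_\mu}$ decays like $\langle t\rangle^{-5/4}$ rather than $\langle t\rangle^{-1/4}$. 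Testing $\partial_x^{\le 5}\partial_y^k$ of this system against $\mu\partial_x^{\le 5}\partial_y^k\mathcal U$ and $\mu\partial_x^{\le 5}\partial_y^k\mathcal F$ for $k=0,1$, the $(1+f)\partial_x,g\partial_y$ cross-terms again cancel between the two equations up to tame commutators, the diffusion and the weight give the dissipation $\|(\partial_y^{k+1}\mathcal U,\partial_y^{k+1}\mathcal F)\|_{H^{5,0}_\mu}^2$, and the damping contributes $\langle t\rangle^{-1}\|(\partial_y^k\mathcal U,\partial_y^k\mathcal F)\|_{H^{5,0}_\mu}^2$, which combines with $\frac{d}{dt}\langle t\rangle^{(5-\delta)/2+k}$ so that only a $\delta$-fraction of the dissipation is consumed, leaving $(\delta/2)^{k+1}\langle t\rangle^{(5-\delta)/2+k}\|\partial_y^{k+1}(\mathcal U,\mathcal F)\|_{H^{5,0}_\mu}^2$ available; this is the second piece of $\mathcal D_\delta$. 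The factor $\delta/2$ in \eqref{edelta}, \eqref{ddelta} is precisely tuned so that the $k=1$ estimate can borrow from the $k=0$ dissipation.

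The main obstacle is the nonlinear coupling, in particular the terms where $v=-\int_0^y\partial_x u$ or $g=-\int_0^y\partial_x f$ carry one normal integral but eight tangential derivatives: the only way to close them is to express $\partial_x^8u,\partial_x^8f$ through $u_8,f_8$ up to terms $\frac{\partial_yu}{1+f}\partial_x^7g$ and $\frac{\partial_yf}{1+f}\partial_x^7g$, then estimate $\partial_x^7g$ via $\int_0^y\partial_x^8f$ and, after a Hardy inequality, in $\|f_8\|_{L^2_\mu}$, while $\partial_yu/(1+f)$ and $\partial_yf/(1+f)$ (with $\|f\|_{L^\infty}\le C\varepsilon\ll 1$ by Sobolev embedding from $\mathcal E_\delta$) are bounded in $L^\infty$ using the $\mathcal U,\mathcal F$ part of the energy with its faster decay. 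Balancing the time weights, the top-order contribution generates terms of size $\sqrt{\mathcal E_\delta}\,\mathcal D_\delta$ with a harmless prefactor, while the lower-order contribution uses the gain $\langle t\rangle^{-1}$ coming from the faster decay of $\mathcal U,\mathcal F$ to compensate the mismatch between the weights $\langle t\rangle^{(1-\delta)/2}$ and $\langle t\rangle^{(5-\delta)/2}$. Integrating in time and using $\mathcal E_\delta(0)\le\varepsilon^2$ together with the bootstrap bound $\mathcal E_\delta\le 2\varepsilon^2$, the nonlinear term is bounded by $C\varepsilon\int_0^t\mathcal D_\delta\,ds$, which for $\varepsilon$ small enough can be absorbed into $\frac12\int_0^t\mathcal D_\delta\,ds$, yielding \eqref{eq2.4}.
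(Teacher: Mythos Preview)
Your proposal is correct and mirrors the paper's proof: the energy is split into the $(u_m,f_m)$ block, closed via the cancellation system \eqref{equmfm} (Proposition~\ref{propo2}), and the $(\mathcal U,\mathcal F)$ block, closed via the extra $\langle t\rangle^{-1}$ damping (Propositions~\ref{propo3}--\ref{propo4}), with the $\delta/2$-weighted $k=1$ level borrowing from the $k=0$ dissipation exactly as you describe; summing yields $\frac{d}{dt}\mathcal E_\delta+\mathcal D_\delta\le C\varepsilon\delta^{-2}\mathcal D_\delta$ and the bootstrap closes for $\varepsilon$ small. Two small corrections: the paper writes the $(\mathcal U,\mathcal F)$ equations (Appendix~\ref{appendix1}) with only the \emph{linear} coupling $\partial_x\mathcal F,\partial_x\mathcal U$ and places all quadratic terms---in $u,f$, not in $\mathcal U,\mathcal F$---in the source, and your weight identity should read $\partial_t\mu+\partial_y^2\mu=\mu/(2\langle t\rangle)$ rather than $(\partial_y\mu)^2/\mu$ (cf.\ Lemma~\ref{techlema}).
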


The proof of Theorem  \ref{thm:priori} is postponed to Sections \ref{SecP}-\ref{SecE2}. By bootstrap principle,    if   $ u, f\in L^\infty([0,+\infty[;\  H^{8,0}_\mu\cap H^{5,1}_{\mu_\lambda})  $ solve the MHD boundary layer system \eqref{eq2},  then  assertion \eqref{eq2.4}  holds true,   provided  the smallness assumption  \eqref{eq2.3} is fulfilled.  

\begin{remark}\label{rmk:lowup} As shown by Lemma \ref{lemu0} in the next section, the weighted Sobolev norm of lower order derivatives of $(u,f)$ can be controlled by that of $(\mathcal{U}, \mathcal F)$, that is, for any $0\leq \lambda<1$ it holds that 
\begin{align*}
  \sum_{0\leq k\leq 1} \comi t^{\frac{5-\delta}{2}+k}\norm{ (\partial_y^ku,  \partial_y^kf)}_{H^{5,0}_{\mu_\lambda }}^2\leq C \sum_{0\leq k\leq 1} \comi t^{\frac{5-\delta}{2}+k}\norm{ (\partial_y^k\mathcal U,  \partial_y^k\mathcal F)}_{H^{5,0}_{\mu }}^2  \leq C \mathcal E_\delta,
 \end{align*}
 and similarly,
 \begin{align*}
  \sum_{0\leq k\leq 1} \comi t^{\frac{5-\delta}{2}+k}\norm{ (\partial_y^{k+1}u,  \partial_y^{k+1}f)}_{H^{5,0}_{\mu_\lambda }}^2\leq   C \mathcal D_\delta, 
 \end{align*}
 where the constant $C$ depends on $\delta $ and $\lambda.$  
 Moreover,  if    $\mathcal E_{\delta}$ is small enough, then it follows from Lemma \ref{lemu8} below that
  \begin{equation*}
  \comi t^{\frac{1-\delta}{2}}\norm{ (u,  f)}_{H^{8,0}_{\mu}}^2  \leq C \mathcal E_\delta    \ \textrm{ and } \ 
    \comi t^{\frac{1-\delta}{2}} \norm{ (\partial_yu,  \partial_yf)}_{H^{8,0}_{\mu}}^2 \leq C \mathcal D_\delta. 
 \end{equation*}
 So the $H^{8,0}_\mu\cap H^{5,1}_{\mu_\lambda}$-norm of $(u,f)$ can be controlled by the energy $\mathcal{E}_\delta.$  
 On the other hand, if $\norm{f_{in}}_{L^\infty}\leq \frac12,$ then we can  verify directly that, by Lemma \ref{lemv} below,  
 \begin{align*}
   \mathcal E_\delta (0)\leq C  \norm{(u_{in}, f_{in})}_{H^{8,0}_{\mu_{in}}}^2\Big(1+ \norm{(u_{in}, f_{in})}^2_{H^{5,1}_{\mu_{in}}}\Big)+C \norm{(u_{in}, f_{in})}^2_{H^{5,1}_{\mu_{in}}}.
 \end{align*}
 So assumption \eqref{smallassumption}  yields the smallness condition \eqref{eq2.3}.
\end{remark}

\subsection{Inequalities with time-dependent weights}  We list in this part some weighted inequalities to be used frequently in the proof of Theorem \ref{thm:priori}.

\begin{lemma}[cf.  Lemma 2.5 in \cite{MR4701733}]\label{lema}
Let $\mu_{\lambda}$ be defined in \eqref{mulambda}, and let $h$ be a function belonging to $H^1$ in $y$ variable, which decays to zero sufficiently fast as $y\to +\infty$. Then for $0\leq \lambda \leq 1$ we have
\begin{equation}\label{ADD1}
    \frac{\lambda^{1\over2}}{  (2 \comi t)^{1\over2}}\norm{h}_{L_{\mu_{\lambda}}^2}\leq \norm{\partial_y h}_{L^2_{\mu_{\lambda}}},   
\end{equation}
and
\begin{align}\label{eq2.6}
   \frac{\lambda^{1\over2}}{  2 \comi t^{1\over2}}\norm{h}_{L_{\mu_{\lambda}}^2}+  \frac{\lambda }{4}\Big\|\frac{y}{\comi t}h\Big\|_{L^2_{\mu_{\lambda}}}\leq  2  \norm{\partial_y h}_{L^2_{\mu_{\lambda}}}.
\end{align}
\end{lemma}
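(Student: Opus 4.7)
The plan is to derive both inequalities from a single weighted identity obtained by integration by parts in $y$. The key observation is that $\partial_y\mu_\lambda=\frac{\lambda y}{2\comi t}\mu_\lambda$, so inserting the factor $1=\partial_y y$ in front of $h^2\mu_\lambda$ and integrating by parts on $\mathbb R_+$ (treating $x$ as a passive parameter and integrating in $x\in\mathbb R$ afterwards) should produce, once the boundary term is discarded,
\begin{equation*}
\norm{h}_{L^2_{\mu_\lambda}}^2 + \frac{\lambda}{2\comi t}\norm{y h}_{L^2_{\mu_\lambda}}^2 \;=\; -2\int_{\mathbb R^2_+} y\,h\,(\partial_y h)\,\mu_\lambda\,dx\,dy.
\end{equation*}
This identity packages the two nonnegative quantities appearing on the desired left-hand sides; the task then reduces to extracting each of them by different estimates of the right-hand side.

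For \eqref{ADD1}, I would apply a weighted Young inequality tuned to exactly absorb the $\norm{yh}_{L^2_{\mu_\lambda}}^2$ term, namely
\begin{equation*}
2|y\,h\,\partial_y h|\,\mu_\lambda\leq \frac{\lambda}{2\comi t}\,y^2h^2\mu_\lambda+\frac{2\comi t}{\lambda}\,(\partial_y h)^2\mu_\lambda.
\end{equation*}
After cancellation this leaves $\norm{h}_{L^2_{\mu_\lambda}}^2\leq \frac{2\comi t}{\lambda}\norm{\partial_y h}_{L^2_{\mu_\lambda}}^2$, which is \eqref{ADD1}. For \eqref{eq2.6} I would take a different route from the same identity: drop the nonnegative $\norm{h}_{L^2_{\mu_\lambda}}^2$ term on the left, then bound the right-hand side by plain Cauchy--Schwarz and cancel one factor of $\norm{yh}_{L^2_{\mu_\lambda}}$. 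This gives $\frac{\lambda}{4}\bigl\|\frac{y}{\comi t}h\bigr\|_{L^2_{\mu_\lambda}}\leq \norm{\partial_y h}_{L^2_{\mu_\lambda}}$. Adding this to \eqref{ADD1}, and observing that because $2>\sqrt 2$ the prefactor $\frac{\lambda^{1/2}}{2\comi t^{1/2}}$ demanded by \eqref{eq2.6} is strictly weaker than the prefactor $\frac{\lambda^{1/2}}{\sqrt{2\comi t}}$ already furnished by \eqref{ADD1}, yields \eqref{eq2.6}.

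The only genuinely delicate step I foresee is justifying the integration by parts, that is, showing the boundary term $y\,h^2\mu_\lambda\big|_{y=0}^{y=+\infty}$ vanishes. The factor $y$ kills the contribution at $y=0$ automatically, while the stated sufficient decay of $h$ as $y\to+\infty$ is precisely what is needed at infinity (together with the exponential growth of $\mu_\lambda$ being counterweighed by that decay). Hence this point is routine rather than a real obstacle, and the whole lemma reduces to the single identity above together with two elementary algebraic extractions.
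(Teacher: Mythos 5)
Your argument is correct and is essentially the paper's own proof: the same integration-by-parts identity $\norm{h}^2_{L^2_{\mu_\lambda}}+\frac{\lambda}{2\comi t}\norm{yh}^2_{L^2_{\mu_\lambda}}=-2\int_{\mathbb R^2_+}yh(\partial_yh)\mu_\lambda\,dx\,dy$ and the same weighted Young inequality yield \eqref{ADD1}, and the boundary-term discussion matches the paper's (implicit) use of the decay hypothesis. For \eqref{eq2.6} the paper instead splits the cross term with the quarter weight $\frac{\lambda}{4\comi t}$, keeps both terms and multiplies through by $\frac{\lambda}{4\comi t}$, whereas you drop $\norm{h}^2_{L^2_{\mu_\lambda}}$, use Cauchy--Schwarz and cancel a factor of $\norm{yh}_{L^2_{\mu_\lambda}}$ before adding \eqref{ADD1}; this is only a cosmetic variation (note also that the case $\lambda=0$ is trivial, as needed since your Young step divides by $\lambda$) and gives the stated constants.
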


\begin{proof}
	For the sake of completeness we re-present the proof given in    \cite[Lemma 2.5]{MR4701733}.  It suffices to consider the case of $0<\lambda\leq 1.$
	 Using integration by parts yields
\begin{multline}\label{ADD2}
\int^{+\infty}_0h^2 e^{\frac{\lambda y^2}{4\comi t}}dy=-\int^{+\infty}_0y\partial_y\inner{h^2 e^{\frac{\lambda y^2}{4\comi t}}}dy\\
=-2\int^{+\infty}_0yh (\partial_yh) e^{\frac{\lambda y^2}{4\comi t}}dy-\frac{\lambda}{2\comi t}\int^{+\infty}_0y^2h^2 e^{\frac{\lambda y^2}{4\comi t}}dy.
\end{multline}
This, with the fact that
\begin{equation*}
	-2\int^{+\infty}_0yh (\partial_yh) e^{\frac{\lambda y^2}{4\comi t}}dy\leq \frac{\lambda}{2\comi t}\int^{+\infty}_0y^2h^2 e^{\frac{\lambda y^2}{4\comi t}}dy+\frac{2\comi t}{\lambda}\int^{+\infty}_0(\partial_yh)^2 e^{\frac{\lambda y^2}{4\comi t}}dy,
\end{equation*}
yields 
\begin{align*}
 \int^{+\infty}_0h^2 e^{\frac{\lambda y^2}{4\comi t}}dy \leq  \frac{2\comi t}{\lambda}\int^{+\infty}_0(\partial_yh)^2 e^{\frac{\lambda y^2}{4\comi t}}dy.   
\end{align*}
Multiplying  the above estimate by $\frac{\lambda }{2\comi t}$, we obtain   assertion \eqref{ADD1} in Lemma  \ref{lema}.

On the other hand, combining \eqref{ADD2}  with the fact that
\begin{equation*}
	-2\int^{+\infty}_0yh (\partial_yh) e^{\frac{\lambda y^2}{4\comi t}}dy\leq \frac{\lambda}{4\comi t}\int^{+\infty}_0y^2h^2 e^{\frac{\lambda y^2}{4\comi t}}dy+\frac{4\comi t}{\lambda}\int^{+\infty}_0(\partial_yh)^2 e^{\frac{\lambda y^2}{4\comi t}}dy,
\end{equation*}
yields 
\begin{align*}
 \int^{+\infty}_0h^2 e^{\frac{\lambda y^2}{4\comi t}}dy+\frac{\lambda}{4\comi t}\int^{+\infty}_0y^2h^2 e^{\frac{\lambda y^2}{4\comi t}}dy  \leq  \frac{4\comi t}{\lambda}\int^{+\infty}_0(\partial_yh)^2 e^{\frac{\lambda y^2}{4\comi t}}dy.   
\end{align*}
Multiplying  the above estimate by $\frac{\lambda }{4\comi t}$, we obtain the second assertion \eqref{eq2.6}. The proof  of Lemma  \ref{lema} is thus completed.   
\end{proof}

\begin{lemma}\label{lemv}
Let $\mu_{\lambda}$ be defined in  \eqref{mulambda}    and let $h$ be a function belonging to $H^1$ in $y$ variable, which decays to zero sufficiently fast as $y\to +\infty$.  Then \begin{align*}
  \forall \ 0\leq \lambda<1,\quad   \norm{\mu_{\frac{\lambda}{2}}h}_{L_y^{\infty}}\leq C_{\lambda} \comi t^{\frac{1}{4}}\norm{\mu_{\frac{\lambda+1}{4}}\partial_y h}_{L^2_y},
\end{align*}
where the constant  $C_\lambda$  depends only on $\lambda.$
\end{lemma}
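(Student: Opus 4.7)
The plan is to prove a pointwise bound on $F(y) := (\mu_{\lambda/2}(y)h(y))^2 = \mu_\lambda(y)h(y)^2$ by integrating the derivative $F'$ from $y$ to infinity, discarding a favorably-signed term, and then closing up via Cauchy--Schwarz together with the weighted Poincaré-type inequality in Lemma \ref{lema}. Since $h$ (and hence $F$) decays sufficiently fast as $\tilde y\to+\infty$, the fundamental theorem of calculus yields
\begin{equation*}
F(y)=-\int_y^{+\infty}\partial_{\tilde y}\bigl(\mu_\lambda h^2\bigr)d\tilde y=-\int_y^{+\infty}\frac{\lambda\tilde y}{2\comi t}\mu_\lambda h^2\,d\tilde y-2\int_y^{+\infty}\mu_\lambda h\,\partial_y h\,d\tilde y.
\end{equation*}
Because $\lambda\geq 0$, the first integral is non-negative and may be dropped, leaving $F(y)\leq 2\int_0^{+\infty}\mu_\lambda|h||\partial_y h|\,dy$.

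Next, I would split the weight asymmetrically as $\mu_\lambda=\mu_{(3\lambda-1)/4}\cdot\mu_{(\lambda+1)/4}$ and apply Cauchy--Schwarz to obtain
\begin{equation*}
F(y)\leq 2\bigl\|\mu_{(3\lambda-1)/4}h\bigr\|_{L^2_y}\bigl\|\mu_{(\lambda+1)/4}\partial_y h\bigr\|_{L^2_y}.
\end{equation*}
Since $(3\lambda-1)/4\leq(\lambda+1)/4$ for $\lambda\leq 1$, pointwise monotonicity of $\mu_\cdot$ gives $\|\mu_{(3\lambda-1)/4}h\|_{L^2_y}\leq\|\mu_{(\lambda+1)/4}h\|_{L^2_y}=\|h\|_{L^2_{\mu_{(\lambda+1)/2}}}$. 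The key point is that $(\lambda+1)/2\in[1/2,1)$ for $\lambda\in[0,1)$, so Lemma \ref{lema} applied with parameter $(\lambda+1)/2$ is available and produces
\begin{equation*}
\|h\|_{L^2_{\mu_{(\lambda+1)/2}}}\leq\bigl(4\comi t/(\lambda+1)\bigr)^{1/2}\|\partial_y h\|_{L^2_{\mu_{(\lambda+1)/2}}}=\bigl(4\comi t/(\lambda+1)\bigr)^{1/2}\bigl\|\mu_{(\lambda+1)/4}\partial_y h\bigr\|_{L^2_y}.
\end{equation*}

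Combining the previous two displays gives $F(y)\leq C_\lambda\comi t^{1/2}\|\mu_{(\lambda+1)/4}\partial_y h\|_{L^2_y}^2$ uniformly in $y\geq 0$, and taking the square root and supremum over $y$ yields the desired estimate. The argument is routine and does not present any real obstacle; the only mild point to watch is that the $\lambda=0$ endpoint must be handled through a parameter $(\lambda+1)/2\geq 1/2$ in Lemma \ref{lema} (rather than parameter $\lambda$ itself, which would make the constant blow up), and this is precisely what the asymmetric split $\mu_\lambda=\mu_{(3\lambda-1)/4}\mu_{(\lambda+1)/4}$ accomplishes—it places the strictly larger weight $\mu_{(\lambda+1)/4}$ on $\partial_y h$ while leaving enough room to control $h$ uniformly for $\lambda\in[0,1)$.
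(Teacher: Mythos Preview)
Your proof is correct, but it follows a different route from the paper's. The paper's argument is shorter and more direct: it writes $h(y)=-\int_y^{+\infty}\partial_y h\,d\tilde y$, pulls the increasing weight $\mu_{\lambda/2}(y)$ inside the integral as $\mu_{\lambda/2}(\tilde y)$, splits $\mu_{\lambda/2}=\mu_{(\lambda+1)/4}\cdot\mu_{(\lambda-1)/4}$, and then applies Cauchy--Schwarz together with the Gaussian integrability bound \eqref{integr}, $\|\mu_{(\lambda-1)/4}\|_{L^2_y}\leq C_\lambda\comi t^{1/4}$, to produce the $\comi t^{1/4}$ factor directly. By contrast, you square first, integrate $\partial_y(\mu_\lambda h^2)$, drop the favorably-signed weight-derivative term, and then obtain the $\comi t^{1/4}$ factor from the weighted Poincar\'e inequality of Lemma~\ref{lema} applied at parameter $(\lambda+1)/2$. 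Your approach trades the explicit Gaussian integral \eqref{integr} for Lemma~\ref{lema}, at the cost of one extra step (the monotonicity bound $\mu_{(3\lambda-1)/4}\leq\mu_{(\lambda+1)/4}$); the paper's approach is slightly leaner but both are entirely elementary and yield constants of the same form.
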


\begin{proof} It  just  follows from straightforward verification. In fact,  observe $\mu_\lambda$ is an increasing function of $y$. Then for any $0\leq \lambda <1$  it holds that 
    \begin{align*}
    \norm{\mu_{\frac{\lambda}{2}}h}_{L_y^{\infty}}&=\Big| \mu_{\frac{\lambda}{2}}\int^{+\infty}_y\partial_yhd\tilde y\Big| \leq  \Big| \int^{+\infty}_y\mu_{\frac{\lambda+1}{4}}(\tilde y)\mu_{\frac{\lambda-1}{4}}(\tilde y)\partial_yhd\tilde y\Big|\\     &\leq  \norm{\mu_{\frac{\lambda-1}{4}}}_{L^2_y}\norm{\mu_{\frac{\lambda+1}{4}}\partial_yh}_{L^2_y}\leq C_{\lambda}\comi t^{\frac{1}{4}}\norm{\mu_{\frac{\lambda+1}{4}}\partial_yh}_{L^2_y},
    \end{align*}
  the last inequality using \eqref{integr}.  This completes the proof of  Lemma \ref{lemv}.
\end{proof}

\section{Preliminary  lemmas}\label{SecP}

Before proving the {\it a priori } estimate, we first list     some lemmas  to be used frequently in the proof.   We begin with the treatment of $ \partial_x^mu$ and $ \partial_x^m f $. Although these terms are not involved in the  definition of $\mathcal E_\delta$ and $\mathcal D_\delta$ when $m\geq 1$, as to be seen in Lemmas  \ref{lemu0} and \ref{lemu8}  below,  they can be controlled by the auxiliary functions $u_m$ and $f_m$,   and furthermore by  $\partial_x^m\mathcal U$ and  $\partial_x^m\mathcal F$  if $m\leq 5$.   

  To simplify the notations, throughout the proof  we will use $C_\lambda$   to denote some generic constants  depending  only on the number  $\lambda$ in \eqref{mulambda}. 
 Moreover we use $C$ to denote some  generic constants which may vary from line to line and depend only on the Sobolev embedding constants and $\lambda$, but are independent of $\delta$ and $\varepsilon$.  

\begin{lemma}\label{lemu0}Let $\mu_\lambda$ be defined in \eqref{mulambda}. 
      For any $0\leq \lambda <1$ it holds that 
     \begin{align}\label{eq2.14}
         \norm{\partial_y^k u}_{H^{5,0}_{\mu_{\lambda}}}\leq C_{\lambda}\norm{\partial_y^k \mathcal{U}}_{H^{5,0}_{\mu}},\quad \norm{\partial_y^k  f}_{H^{5,0}_{\mu_{\lambda}}}\leq C_{\lambda}\norm{\partial_y^k \mathcal{F}}_{H^{5,0}_{\mu}},\quad k=0,1,2.
     \end{align} 
\end{lemma}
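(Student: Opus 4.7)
The plan is to invert the relation between $u$ and $\mathcal U$ and then apply weighted $L^2$ estimates. Setting $\varphi(t,x,y) := \int_0^y u(t,x,\tilde y)\,d\tilde y$, definition \eqref{eq3+} becomes $\mathcal U = u + \frac{y}{2\comi t}\varphi$ with $\partial_y\varphi = u$. A direct computation using $\partial_y\mu = \frac{y}{2\comi t}\mu$ yields $\mu\mathcal U = \partial_y(\mu\varphi)$, so integrating from $0$ (where $\varphi$ vanishes) gives the representation
\begin{equation*}
 \mu(y)\varphi(t,x,y) = \int_0^y \mu(\tilde y)\,\mathcal U(t,x,\tilde y)\,d\tilde y,\qquad u = \mathcal U - \frac{y}{2\comi t}\varphi.
\end{equation*}
Since this identity commutes with $\partial_x$, it is enough to show $\|\partial_y^k u\|_{L^2_{\mu_\lambda}} \leq C_\lambda \|\partial_y^k \mathcal U\|_{L^2_\mu}$ for $k=0,1,2$. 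The workhorse is the elementary bound $\int_0^y \mu(\tilde y)\,d\tilde y \leq \frac{4\comi t}{y}\mu(y)$ for $y>0$, proved by the substitution $s = y - \tilde y$ together with $s(2y-s) \geq sy$ on $[0,y]$.

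For $k=0$, applying Cauchy--Schwarz in $\tilde y$ at fixed $(t,x)$ and the workhorse bound yields
\begin{equation*}
\varphi(y)^2 \leq \mu^{-2}(y)\Big(\int_0^y \mu\,d\tilde y\Big)\int_0^{+\infty} \mu\mathcal U^2 \, d\tilde y \leq \frac{4\comi t}{y}\mu^{-1}(y)\int_0^{+\infty}\mu\mathcal U^2\,d\tilde y.
\end{equation*}
Multiplying by $(y/(2\comi t))^2\mu_\lambda(y)$, integrating in $y$ using the Gaussian identity $\int_0^{+\infty} y\,e^{(\lambda-1)y^2/(4\comi t)}\,dy = \frac{2\comi t}{1-\lambda}$, and then integrating in $x$ gives $\|\frac{y}{2\comi t}\varphi\|_{L^2_{\mu_\lambda}} \leq C_\lambda \|\mathcal U\|_{L^2_\mu}$ with a $t$-independent constant. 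Combining with the trivial $\|\mathcal U\|_{L^2_{\mu_\lambda}} \leq \|\mathcal U\|_{L^2_\mu}$ (since $\lambda \leq 1$) settles $k=0$, and the $\partial_x^j$ derivatives are treated identically.

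For $k=1,2$, I would differentiate $u = \mathcal U - \frac{y}{2\comi t}\varphi$ and repeatedly use $\partial_y\varphi = u = \mathcal U - \frac{y}{2\comi t}\varphi$ to eliminate $u$ on the right-hand side, obtaining for instance
\begin{equation*}
\partial_y u = \partial_y\mathcal U - \frac{y}{2\comi t}\mathcal U + \frac{1}{2\comi t}\Big(\frac{y^2}{2\comi t} - 1\Big)\varphi,
\end{equation*}
and a similar expression for $\partial_y^2 u$ that also involves $\partial_y^2\mathcal U$. Each polynomial-in-$y$ factor times $\mathcal U$ or $\partial_y\mathcal U$ is controlled by Lemma \ref{lema} applied at an auxiliary level $\lambda' \in (\lambda,1)$ that absorbs the gap $\mu_\lambda \leq \mu_{\lambda'}$. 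The $\varphi$ terms of the form $\frac{y^j}{\comi t^{j+1}}\varphi$ are treated by the same Cauchy--Schwarz argument as in $k=0$, producing $\|\frac{y^j}{\comi t^{j+1}}\varphi\|_{L^2_{\mu_\lambda}} \leq C_{\lambda,j}\,\comi t^{-(j+1)/2}\|\mathcal U\|_{L^2_\mu}$; the negative powers of $\comi t$ are then compensated by iterating Lemma \ref{lema} with $\lambda=1$ to obtain $\|\mathcal U\|_{L^2_\mu} \leq C\,\comi t^{k/2}\|\partial_y^k \mathcal U\|_{L^2_\mu}$.

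The delicate point is the careful bookkeeping of powers of $\comi t$: the workhorse bound $\int_0^y \mu \leq \frac{4\comi t}{y}\mu$ injects positive powers, the weight gap $1-\lambda > 0$ produces negative ones through Gaussian integration, and Lemma \ref{lema} trades derivatives for half-powers. These must cancel to yield a constant $C_\lambda$ independent of $t$, which happens precisely because $\lambda < 1$; this hypothesis appears explicitly through a $(1-\lambda)^{-1}$-type dependence inherited from the Gaussian moments.
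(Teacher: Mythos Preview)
Your approach is essentially the paper's: the identity $\mu\varphi=\int_0^y\mu\,\mathcal U\,d\tilde y$ is exactly the paper's solution of the ODE \eqref{eq2.15} (rewritten via \eqref{zeromean}), and both proofs proceed by Cauchy--Schwarz plus Gaussian moment bounds. The paper, however, uses the Dawson-function estimate $\int_0^y\mu(\tilde y)\,d\tilde y\leq 4\comi t^{1/2}\mu(y)$, whereas you use $\int_0^y\mu(\tilde y)\,d\tilde y\leq \tfrac{4\comi t}{y}\mu(y)$.

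There is one genuine gap. In your $k=1$ expansion the $\varphi$-contribution includes the term $-\tfrac{1}{2\comi t}\varphi$, i.e.\ the case $j=0$ in your general template. Feeding your workhorse bound into ``the same Cauchy--Schwarz argument as in $k=0$'' yields
\[
\Big\|\tfrac{1}{2\comi t}\varphi\Big\|_{L^2_{\mu_\lambda}}^2
\ \leq\ \frac{1}{\comi t}\int_0^{+\infty}\frac{1}{y}\,e^{\frac{(\lambda-1)y^2}{4\comi t}}\,dy\ \cdot\ \|\mathcal U\|_{L^2_\mu}^2,
\]
and the $y$-integral diverges logarithmically at $y=0$. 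So the claimed bound $\|\tfrac{y^j}{\comi t^{j+1}}\varphi\|_{L^2_{\mu_\lambda}}\leq C_{\lambda,j}\comi t^{-(j+1)/2}\|\mathcal U\|_{L^2_\mu}$ does not follow from your argument when $j=0$; it works only for $j\geq 1$, which is precisely why the $k=0$ case (where the sole $\varphi$-term carries a factor $y$) went through.

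The fix is cheap. Either supplement your workhorse with the trivial monotonicity bound $\int_0^y\mu\leq y\,\mu(y)$, giving $\varphi^2\leq \min\big(y,\tfrac{4\comi t}{y}\big)\mu^{-1}\|\mathcal U\|_{L^2_\mu}^2$, which removes the singularity; or use the paper's uniform bound $\int_0^y\mu\leq 4\comi t^{1/2}\mu(y)$; or bypass the issue entirely by applying Lemma~\ref{lema} at an auxiliary level $\lambda'\in(\lambda,1)$ to $\varphi$ (whose $y$-derivative is $u$) and invoking the already-established $k=0$ estimate. The paper effectively takes the last route in its Step~2. With any of these patches your sketch is complete and matches the paper's proof in substance.
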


\begin{proof}
 We proceed to prove \eqref{eq2.14} for $k=0,1$ and $2$. It suffices to derive the estimate on $u$, since the treatment of    $f$ is similar.   

 {\it Step 1). (Estimate on $u$)}. 
By   definition \eqref{eq3} of $\mathcal{U}$   and \eqref{zeromean}, we have
\begin{equation}\label{eq2.15}
	\left\{
\begin{aligned}
&\Big( \partial_y+\frac{y}{2\comi t}\Big)\int^{+\infty}_yud\tilde y=-u+\frac{y}{2\comi t} \int^{+\infty}_yud\tilde y=-\mathcal{U},\\
&\int^{+\infty}_0ud\tilde y=0.
\end{aligned}
\right.
\end{equation}
Solving this ODE system \eqref{eq2.15}, we get
\begin{align*}
\int^{+\infty}_yud\tilde y=-\exp\Big(-\frac{y^2}{4\comi t}\Big)\int^y_0 \exp\Big(\frac{\tilde y^2}{4\comi t}\Big)\mathcal{U}d\tilde y.
\end{align*}
Then we have, for any $0\leq m \leq 5$ and any $0\leq\lambda<1$,
\begin{multline}\label{expu}
\exp\Big(\frac{\lambda y^2}{8 \comi t}\Big)\int^{+\infty}_y\partial_x^mud\tilde y\\
=-\exp\Big(\frac{(\lambda-1) y^2}{8 \comi t}\Big)\int^y_0 \exp\Big(\frac{\tilde y^2-y^2}{8\comi t}\Big) \exp\Big(\frac{\tilde y^2}{8\comi t}\Big)\partial_x^m\mathcal{U}d\tilde y.
\end{multline}
On the other hand, observing 
\begin{align*}
 \forall\ \xi>0,\quad    e^{-\xi^2}\int^{\xi}_0 e^{z^2}dz\leq 2,
\end{align*}
we have
\begin{align*}
   \int^y_0 \exp\Big(\frac{\tilde y^2-y^2}{4\comi t}\Big) d\tilde y =2 \comi t^{1\over2} \exp\Big(-\frac{y^2}{4\comi t}\Big)\int^{\frac{y}{2\sqrt{\comi t}}}_0e^{z^2}dz\leq 4  \comi t^{1\over2}.
\end{align*}
As a result,  in view of \eqref{expu}, 
\begin{align*}
& \mu_{\lambda}(t, y) \Big| \int^{+\infty}_y\partial_x^mud\tilde y\Big |^2= \Big|\exp\Big(\frac{\lambda y^2}{8 \comi t}\Big)\int^{+\infty}_y\partial_x^mud\tilde y\Big |^2 \\
& \leq  \exp\Big(\frac{(\lambda-1) y^2}{4 \comi t}\Big)\bigg[ \int^y_0 \exp\Big(\frac{\tilde y^2-y^2}{4\comi t}\Big) d\tilde y\bigg]  \norm{\partial_x^m\mathcal{U}}_{L^2_{\mu}}^2\\
&\leq   4  \comi t^{1\over2}  \exp\Big(\frac{(\lambda-1) y^2}{4 \comi t}\Big)   \norm{\partial_x^m\mathcal{U}}_{L^2_{\mu}}^2,
\end{align*}
which yields that, for any $0\leq m\leq 5,$
\begin{multline*}
    \Big\|\frac{y}{2\comi t}\int^{+\infty}_y\partial_x^mud\tilde y\Big\|^2_{L^2_{\mu_{\lambda}}}\\\leq  \comi t^{\frac{1}{2}}\int^{+\infty}_0\frac{y^2}{\comi t^2}\exp\Big(\frac{(\lambda-1) y^2}{4 \comi t}\Big)dy\norm{\partial_x^m\mathcal{U}}^2_{L^2_{\mu}}\leq C_{\lambda}\norm{\partial_x^m\mathcal{U}}^2_{L^2_{\mu}},
\end{multline*}
where in the last inequality  we  used the fact that
\begin{align*}
\forall\  0\leq \lambda<1,\quad  \comi t^{\frac{1}{2}}\int^{+\infty}_0\frac{y^2}{\comi t^2}\exp\Big(\frac{(\lambda-1) y^2}{4 \comi t}\Big)dy=   \int^{+\infty}_{0}y^2e^{\frac{(\lambda-1)}{4}y^2}dy\leq C_{\lambda}.
\end{align*}
Thus, using \eqref{eq3} yields 
\begin{equation}\label{eq2.16}
  \forall\ 0\leq \lambda<1,\ \   \norm{u}_{H^{5,0}_{\mu_{\lambda}}} \leq  \Big\|\frac{y}{2\comi t}\int^{+\infty}_yud\tilde y\Big\|_{H^{5,0}_{\mu_{\lambda}}}+ \norm{\mathcal{U}}_{H^{5,0}_{\mu_{\lambda}}}\leq C_{\lambda} \norm{\mathcal{U}}_{H^{5,0}_{\mu}}.  
\end{equation}
 
  {\it Step 2). (Estimate on $\partial_yu$)}. We apply $\partial_y$ to  the representation of $\mathcal{U}$  in \eqref{eq3}  to get
\begin{equation*} 
    \partial_yu=\partial_y\mathcal{U}+\frac{1}{2\comi t}\int^{+\infty}_y u d\tilde y-\frac{y}{2\comi t}u.
\end{equation*}
This yields
\begin{equation}\label{pyu}
   \norm{\partial_yu}_{H^{5,0}_{\mu_{\lambda}}} \leq  \norm{\partial_y\mathcal{U}}_{H^{5,0}_{\mu_{\lambda}}}+\frac{1}{2\comi t} \Big\|\int^{+\infty}_y u d\tilde y\Big\|_{H^{5,0}_{\mu_{\lambda}}}+\frac{1}{2\comi t}\norm{ y u}_{H^{5,0}_{\mu_{\lambda}}}.
\end{equation}
For the second term on the right-hand side, we use Lemma \ref{lema} and  \eqref{eq2.16} to get
\begin{align*}
\frac{1}{2\comi t} \Big\|\int^{+\infty}_y u d\tilde y\Big\|_{H^{5,0}_{\mu_{\lambda}}} \leq \frac{C_{\lambda}}{\comi t^{1\over2}}\norm{u}_{H^{5,0}_{\mu_{\lambda}}} \leq  \frac{C_{\lambda}}{\comi t^{1\over2}}\norm{\mathcal{U}}^2_{H^{5,0}_{\mu}}\leq C_{\lambda}\norm{\partial_y\mathcal{U}}^2_{H^{5,0}_{\mu}} .
\end{align*}
Moreover,   for any $0\leq \lambda <1$, we use   the fact that 
$\norm{y^2e^{-a y^2}}_{L^\infty}\leq C_a$ for any constant $a>0,$ to compute 
\begin{equation}\label{compu+}
\frac{1}{2\comi t}\norm{ y u}_{H^{5,0}_{\mu_{\lambda}}}\leq \frac{C_{\lambda}}{\comi t^{1\over2 }}\norm{u}_{H^{5,0}_{\mu_{\frac{\lambda+1}{2}}}} \leq   \frac{C_{\lambda}}{\comi t^{1\over2}}\norm{\mathcal{U}}_{H^{5,0}_{\mu}}\leq C_{\lambda}\norm{\partial_y\mathcal{U}}_{H^{5,0}_{\mu}},
\end{equation}
the second inequality using \eqref{eq2.16} and the last inequality following from Lemma \ref{lema}.  Now we 
combine  the above estimates with \eqref{pyu}  to conclude that
\begin{align}\label{eq2.17}
 \forall\ 0\leq \lambda<1,\quad      \norm{\partial_yu}_{H^{5,0}_{\mu_{\lambda}}} \leq  C_{\lambda}\norm{\partial_y\mathcal{U}}_{H^{5,0}_{\mu}}. 
\end{align}

 {\it Step 3). (Estimate on $\partial_y^2u$)}.
  We apply $\partial_y^2$ to the representation of $\mathcal{U}$ in \eqref{eq3} to get
\begin{align*} 
    \partial_y^2u=\partial_y^2\mathcal{U}-\frac{1}{\comi t}u-\frac{y}{2\comi t}\partial_yu.
\end{align*}
Using Lemma \ref{lema} and  \eqref{eq2.16} again  yields that, for any $0\leq \lambda<1,$
 \begin{align*}
 \frac{1}{\comi t}\norm{u}_{H^{5,0}_{\mu_{\lambda}}}\leq  \frac{C_\lambda}{\comi t}\norm{\mathcal{U}}_{H^{5,0}_{\mu}} \leq \frac{C_{\lambda}}{\comi t^{\frac{1}{2}}}\norm{\partial_y\mathcal{U}}_{H^{5,0}_{\mu}}\leq C_{\lambda}\norm{\partial_y^2\mathcal{U}}_{H^{5,0}_{\mu}}.  
\end{align*}
Moreover, repeating the argument in \eqref{compu+} and then using   \eqref{eq2.17} we  have  
\begin{align*}
\frac{1}{2\comi t}\norm{y\partial_yu}_{H^{5,0}_{\mu_{\lambda}}}\leq \frac{C_{\lambda}}{\comi t^{1\over2 }}\norm{\partial_yu}_{H^{5,0}_{\mu_{\frac{\lambda+1}{2}}}}\leq   \frac{C_{\lambda}}{\comi t^{1\over2 }}\norm{\partial_y\mathcal{U}}_{H^{5,0}_{\mu}}\leq C_{\lambda}\norm{\partial_y^2\mathcal{U}}_{H^{5,0}_{\mu}},
\end{align*}
the last inequality following from Lemma \ref{lema}.
As a result, we combine the above estimates to conclude that, for any $0\leq \lambda<1,$
\begin{align*}
    \norm{\partial_y^2u}_{H^{5,0}_{\mu_{\lambda}}} \leq  \norm{\partial_y^2\mathcal{U}}_{H^{5,0}_{\mu}}+ \frac{1}{\comi t}\norm{u}_{H^{5,0}_{\mu_{\lambda}}}+\frac{1}{2\comi t}\norm{y\partial_yu}_{H^{5,0}_{\mu_{\lambda}}}\leq C_{\lambda}\norm{\partial_y^2\mathcal{U}}_{H^{5,0}_{\mu}}. 
\end{align*}
This together with \eqref{eq2.16} and \eqref{eq2.17} yields the desired estimate on $u$ in \eqref{eq2.14}. The proof of Lemma \ref{lemu0} is thus completed. 
\end{proof}

As an immediate consequence of Lemma \ref{lemu0} and definitions \eqref{edelta} and \eqref{ddelta} of $\mathcal E_\delta$ and $\mathcal D_\delta$,  we have the following 

\begin{corollary}
	\label{coro}
	For any $0\leq \lambda<1$ and  any $t\geq 0$ it holds that
	\begin{equation*}
	\sum_{0\leq k\leq 1} \Big(\frac{\delta}{2} \Big)^{\frac{k}{2}} \comi t^{\frac{5-\delta}{4}+\frac{k}{2}} \norm{(\partial_y^ku, \partial_y^kf)}_{H^{5,0}_{\mu_\lambda}}   \leq C_{\lambda} \big(\mathcal E_\delta(t)\big)^{1\over2}, 
	\end{equation*}
	and 
	\begin{equation*}
		  \sum_{0\leq k\leq 1} \Big(\frac{\delta}{2} \Big)^{\frac{k+1}{2}} \comi t^{\frac{5-\delta}{4}+\frac{k}{2}} \norm{(\partial_y^{k+1}u, \partial_y^{k+1}f)}_{H^{5,0}_{\mu_\lambda}} \leq C_{\lambda}\big(\mathcal D_\delta(t)\big)^{1\over2}.  	\end{equation*}
Thus if 
	 \begin{equation*}
\sup_{t\geq 0}\mathcal E_{\delta}(t)  \leq 2\varepsilon^2,
\end{equation*}
then
	\begin{equation}\label{asslow}
\forall\ 0\leq \lambda<1, \ \forall \ t\geq 0, \quad 	 	\sum_{0\leq k\leq 1} \Big(\frac{\delta}{2} \Big)^{\frac{k}{2}} \comi t^{\frac{5-\delta}{4}+\frac{k}{2}} \norm{(\partial_y^ku, \partial_y^kf)}_{H^{5,0}_{\mu_\lambda}}  \leq \varepsilon C_{\lambda}.
	\end{equation}
\end{corollary}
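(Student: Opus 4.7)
The plan is to chain Lemma \ref{lemu0} together with the definitions \eqref{edelta} and \eqref{ddelta} of $\mathcal E_\delta$ and $\mathcal D_\delta$. The corollary is essentially a repackaging of Lemma \ref{lemu0} once one matches up the exponents of $\comi t$ and of $\delta/2$ correctly, so there is no serious obstacle; the entire content of the statement is that the passage from $(\mathcal U,\mathcal F)$ back to $(u,f)$ costs only a constant $C_\lambda$.

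For the first inequality, I would apply Lemma \ref{lemu0} with $k=0$ and $k=1$ to obtain
\begin{equation*}
\norm{\partial_y^k u}_{H^{5,0}_{\mu_\lambda}}+\norm{\partial_y^k f}_{H^{5,0}_{\mu_\lambda}}\leq C_\lambda\Big(\norm{\partial_y^k\mathcal U}_{H^{5,0}_{\mu}}+\norm{\partial_y^k\mathcal F}_{H^{5,0}_{\mu}}\Big),\quad k=0,1,
\end{equation*}
then multiply each $k$-term by $(\delta/2)^{k/2}\comi t^{(5-\delta)/4+k/2}$ and sum. Squaring yields exactly the two summands $(\delta/2)^k\comi t^{(5-\delta)/2+k}\norm{(\partial_y^k\mathcal U,\partial_y^k\mathcal F)}_{H^{5,0}_\mu}^2$ that appear as the second block in the definition \eqref{edelta} of $\mathcal E_\delta$, whence the right-hand side is controlled by $C_\lambda\,\mathcal E_\delta(t)^{1/2}$ (after absorbing the Euclidean constant between $\sum a_j\leq(2\sum a_j^2)^{1/2}$).

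For the second inequality, I would apply Lemma \ref{lemu0} this time with $k=1$ and $k=2$ to control $\partial_y^{k+1}u$ and $\partial_y^{k+1}f$ for $k\in\{0,1\}$, multiply by $(\delta/2)^{(k+1)/2}\comi t^{(5-\delta)/4+k/2}$, and sum. The squared expressions match exactly the second block in the definition \eqref{ddelta} of $\mathcal D_\delta$, giving the bound by $C_\lambda\,\mathcal D_\delta(t)^{1/2}$.

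Finally, the last assertion \eqref{asslow} is immediate: if $\sup_{t\geq 0}\mathcal E_\delta(t)\leq 2\varepsilon^2$, then $\mathcal E_\delta(t)^{1/2}\leq\sqrt{2}\,\varepsilon$, and absorbing $\sqrt 2$ into the generic constant $C_\lambda$ produces the stated bound. The only mild point to verify is that the fact that Lemma \ref{lemu0} is stated for $k=0,1,2$ (rather than only $k=0,1$) is what makes the dissipation estimate work, since we need $k+1$ up to $2$; no regularity beyond what Lemma \ref{lemu0} provides is required.
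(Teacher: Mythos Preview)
Your proof is correct and follows exactly the approach the paper indicates: the corollary is stated as an immediate consequence of Lemma~\ref{lemu0} together with the definitions \eqref{edelta} and \eqref{ddelta}, and your argument spells this out by applying Lemma~\ref{lemu0} for $k=0,1$ (first inequality) and $k=1,2$ (second inequality), matching the weights to the second blocks of $\mathcal E_\delta$ and $\mathcal D_\delta$ respectively. Your observation that the $k=2$ case of Lemma~\ref{lemu0} is precisely what is needed for the dissipation estimate is accurate.
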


The following   lemma  is devoted to controlling $(\partial_x^m u, \partial_x^m f)$  in terms of   $(u_m,  f_m)$ under the smallness assumption.   

 \begin{lemma}\label{lemu8}
There exists a small   constant $\varepsilon>0$  such that if 
 \begin{equation*}
 	 \sup_{t\geq 0}\mathcal E_\delta(t)\leq 2\varepsilon^2,
\end{equation*}
then for each $1\leq m\leq 8$ it holds that
         \begin{equation}\label{ceps}
        \comi t^{\frac{1-\delta}{4}} \norm{(\partial_x^mu, \partial_x^mf)}_{L^2_{\mu}} \leq  C\comi t^{\frac{1-\delta}{4}} \norm{(u_m,  f_m)}_{L^2_{\mu}}  \leq C  \big( \mathcal{E}_\delta(t)\big)^{1\over 2} \leq C \varepsilon,
        \end{equation}
        and that
        \begin{align*}  
         \delta^{1\over 2}\comi t^{\frac{1-\delta}{4}} \norm{(\partial_x^m\partial_yu, \partial_x^m\partial_yf)}_{L^2_{\mu}}   \leq C  \big( \mathcal{D}_\delta(t)\big)^{1\over 2}.
         \end{align*}
 \end{lemma}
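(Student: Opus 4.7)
The approach is to invert definitions \eqref{eq2.1}--\eqref{eqfm} into the exact algebraic identities
\[\partial_x^m u = u_m - \frac{\partial_y u}{1+f}\partial_x^{m-1} g,\qquad \partial_x^m f = f_m - \frac{\partial_y f}{1+f}\partial_x^{m-1} g\qquad (m\geq 1),\]
and to show the residuals $R^u:=\frac{\partial_y u}{1+f}\partial_x^{m-1}g$ and $R^f:=\frac{\partial_y f}{1+f}\partial_x^{m-1}g$ are absorbable in $L^2_\mu$ by a small multiple of $\|(\partial_x^m u,\partial_x^m f)\|_{L^2_\mu}$; a standard absorption then yields the two displayed inequalities once the $\comi t^{(1-\delta)/4}$ time-weights of Definition \ref{defed} are inserted.

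To estimate $\|R^u\|_{L^2_\mu}$, I would use the H\"older split
\[\|R^u\|_{L^2_\mu}\leq \Big\|\tfrac{\partial_y u}{1+f}\Big\|_{L^2_y L^\infty_x}\big\|\mu^{1/2}\partial_x^{m-1}g\big\|_{L^\infty_y L^2_x}.\]
The first factor, via one-dimensional Sobolev in $x$ together with $\|(1+f)^{-1}\|_{L^\infty}\leq 2$ (an $\varepsilon$-smallness consequence of Lemma \ref{lemv} and Corollary \ref{coro}), is bounded by $C\|\partial_y u\|_{H^{5,0}_{\mu_\lambda}}$, which by the $\mathcal E_\delta$-part of Corollary \ref{coro} with $k=1$ is at most $C\varepsilon/(\delta^{1/2}\comi t^{7/4-\delta/4})$. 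For the second factor I use that the zero-mean condition \eqref{zeromean} is preserved in time, so $\partial_x^{m-1}g=-\int_y^\infty \partial_x^m f\,d\tilde y$; combining weighted Cauchy--Schwarz with the Gaussian tail estimate $\int_y^\infty \mu^{-1}(\tilde y)\,d\tilde y\leq C\comi t^{1/2}\mu^{-1}(y)$ gives, pointwise in $x$, $\|\mu^{1/2}\partial_x^{m-1}g(x,\cdot)\|_{L^\infty_y}^2\leq C\comi t^{1/2}\|\partial_x^m f(x,\cdot)\|_{L^2_\mu}^2$ and hence, after integrating in $x$, $\|\mu^{1/2}\partial_x^{m-1}g\|_{L^\infty_y L^2_x}\leq C\comi t^{1/4}\|\partial_x^m f\|_{L^2_\mu}$. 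Multiplying yields $\|R^u\|_{L^2_\mu}\leq (C\varepsilon/\delta^{1/2})\comi t^{-3/2+\delta/4}\|\partial_x^m f\|_{L^2_\mu}\leq (C\varepsilon/\delta^{1/2})\|\partial_x^m f\|_{L^2_\mu}$, with the same bound for $R^f$. Once $\varepsilon$ is small compared to $\delta^{1/2}$, absorbing on the LHS of the coupled system for $\partial_x^m u$ and $\partial_x^m f$ gives $\|(\partial_x^m u,\partial_x^m f)\|_{L^2_\mu}\leq C\|(u_m,f_m)\|_{L^2_\mu}$; multiplying by $\comi t^{(1-\delta)/4}$ and invoking Definition \ref{defed} completes the first assertion \eqref{ceps}.

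For the $\partial_y$ version I would differentiate the identity and expand by Leibniz:
\[\partial_y R^u = \frac{\partial_y^2 u}{1+f}\partial_x^{m-1}g - \frac{\partial_y u\,\partial_y f}{(1+f)^2}\partial_x^{m-1}g - \frac{\partial_y u}{1+f}\partial_x^m f,\]
and bound each term using the same H\"older template. The first term is controlled exactly as above, except the $\|\partial_y^2 u\|_{H^{5,0}_{\mu_\lambda}}$ factor is now supplied by the $\mathcal D_\delta$-part of Corollary \ref{coro}; the third term is the critical one, and I would use a weighted 1D Sobolev inequality in $y$ (with the $\partial_y\mu$ contribution absorbed via Lemma \ref{lema}) to get $\|\mu^{1/2}\partial_x^m f\|_{L^\infty_y L^2_x}\leq C\|\partial_x^m f\|_{L^2_\mu}^{1/2}\|\partial_x^m\partial_y f\|_{L^2_\mu}^{1/2}$, whose first half is already under $\mathcal E_\delta^{1/2}$ control from Part 1 and whose second half is absorbed on the LHS; the middle term is handled by a similar combination of the $\mathcal E_\delta$ and $\mathcal D_\delta$ bounds. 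The result is $\delta^{1/2}\comi t^{(1-\delta)/4}\|\partial_x^m\partial_y u\|_{L^2_\mu}\leq C\mathcal D_\delta^{1/2}$, and symmetrically for $f$. The main obstacle is the precise bookkeeping of time-decay and $\delta$-powers: the positive $\comi t^{1/2}$ generated by Lemma \ref{lema} and the Gaussian tail integral must be strictly dominated by the negative $\comi t^{-7/4+\delta/4}$ supplied by Corollary \ref{coro}, so that every residual inherits a prefactor $C\varepsilon/\delta^{1/2}$ rendering the absorption uniform in $t\geq 0$.
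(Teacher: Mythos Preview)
Your argument for the first assertion is correct and follows the paper's absorption strategy, with only a cosmetic difference in the H\"older split (you place the full weight $\mu$ on $\partial_x^{m-1}g$ and invoke a Gaussian-tail bound, whereas the paper splits the weight as $\mu_{1/4}\cdot\mu_{1/4}$ and appeals to Lemma~\ref{lemv}); either way one obtains the prefactor $C\varepsilon\delta^{-1/2}$ and the coupled absorption closes.

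Your treatment of the third term $\tfrac{\partial_y u}{1+f}\,\partial_x^m f$ in the second assertion, however, does not close. The interpolation $\|\mu^{1/2}\partial_x^m f\|_{L^\infty_y L^2_x}\leq C\|\partial_x^m f\|_{L^2_\mu}^{1/2}\|\partial_x^m\partial_y f\|_{L^2_\mu}^{1/2}$ inserts the very quantity you are estimating, raised to the power $1/2$, on the right. Setting $X:=\delta^{1/2}\comi t^{(1-\delta)/4}\|(\partial_x^m\partial_y u,\partial_x^m\partial_y f)\|_{L^2_\mu}$, your bound reads $X\leq C(\mathcal D_\delta)^{1/2}+C\varepsilon^{3/2}\delta^{-1/4}X^{1/2}$; Young's inequality then leaves an additive remainder $C\varepsilon^{3}\delta^{-1/2}$, which is a fixed constant and is \emph{not} dominated by $(\mathcal D_\delta)^{1/2}$ (the latter may vanish). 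Hence the stated inequality $X\leq C(\mathcal D_\delta)^{1/2}$ fails. The paper circumvents this by swapping which factor receives the $L^\infty_y$: it writes
\[
\Big\|\tfrac{\partial_y u}{1+f}\,\partial_x^m f\Big\|_{L^2_\mu}\leq 2\|\mu_{1/4}\partial_y u\|_{L^\infty}\,\|\partial_x^m f\|_{L^2_{\mu_{1/2}}}\leq C\comi t^{1/4}\|\partial_y^2 u\|_{H^{1,0}_{\mu_{3/4}}}\|\partial_x^m f\|_{L^2_\mu},
\]
using Lemma~\ref{lemv} to trade $\partial_y u$ in $L^\infty_y$ for $\partial_y^2 u$ in $L^2_y$. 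The factor $\|\partial_y^2 u\|_{H^{1,0}_{\mu_{3/4}}}\leq C\|\partial_y^2\mathcal U\|_{H^{5,0}_\mu}$ sits directly inside $\mathcal D_\delta$, while $\comi t^{1/4}\|\partial_x^m f\|_{L^2_\mu}\leq C\varepsilon\comi t^{\delta/4}$ comes from the first assertion; no absorption is needed and the term is bounded by $C\varepsilon\delta^{-1}(\mathcal D_\delta)^{1/2}$.
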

 
 \begin{proof} From Lemma \ref{lemv} and  assertion \eqref{asslow} in Corollary \ref{coro}, it follows that       \begin{align*}
         \norm{f}_{L^{\infty}}\leq C \comi t^{1\over 4} \norm{\partial_y f}_{H^{1,0}_{\mu_{1\over2}}}\leq C\varepsilon\delta^{-\frac{1}{2}}.
     \end{align*}
     Thus we can find a small $\varepsilon>0$ such that $C\varepsilon\delta^{-\frac{1}{2}}\leq {1\over2}$. This yields 
     \begin{equation}\label{ublb}
\forall\  (t,x,y)\in [0,+\infty[ \,  \times \mathbb R_+^2, \quad     \frac12  \leq  1+ f(t,x,y)\leq  \frac{3}{2}.
     \end{equation}
Thus, by   definition \eqref{eq2.1} of  $u_m,$ 
\begin{align*}
  \forall\ 1\leq m\leq 8,\quad      \norm{\partial_x^m u}_{L^2_{\mu}} &\leq       \norm{u_m}_{L^2_{\mu}} + 2\norm{(\partial_x^{m-1}g)\partial_yu}_{L^2_{\mu}}.
\end{align*} 
   On the other hand, we use Lemma \ref{lemv} to compute
   \begin{equation}\label{compu}
   \begin{aligned}
     \norm{(\partial_x^{m-1}g)\partial_yu}_{L^2_{\mu}} & \leq \norm{\mu_{1\over 4}\partial_x^{m-1}g}_{L_x^2 L_y^\infty}  \norm{\mu_{1\over 4} \partial_yu}_{L_x^\infty L_y^2}\\
       &    \leq C       \comi t^{\frac{1}{4}}   \norm{\partial_x^m f}_{L^2_{\mu}}\norm{\partial_yu}_{H^{1,0}_{\mu_{1\over2}}} \leq C  \varepsilon  \delta^{-\frac{1}{2}}    \norm{\partial_x^m f}_{L^2_{\mu}},
       \end{aligned}
   \end{equation} 
   the last inequality  using   \eqref{asslow} again.
     Combining the above estimates yields
   \begin{align*}
        \norm{\partial_x^m u}_{L^2_{\mu}} &\leq      \norm{u_m}_{L^2_{\mu}} +C \varepsilon \delta^{-\frac{1}{2}}    \norm{\partial_x^m f }_{L^2_{\mu}} .
   \end{align*}  
 Similarly, we have
    \begin{align*}
        \norm{\partial_x^m f }_{L^2_{\mu}}\leq    \norm{f_m}_{L^2_{\mu}} +C\varepsilon \delta^{-\frac{1}{2}}    \norm{\partial_x^mu}_{L^2_{\mu}}.
   \end{align*}
 As a result, by choosing $\varepsilon$ such that $\varepsilon \delta^{-\frac{1}{2}} $ is small  sufficiently,   we have
     \begin{align*}
  \forall\ 1\leq m\leq 8,\quad \norm{(\partial_x^mu, \partial_x^m f )}_{L^2_{\mu}}  \leq   C \norm{(u_m, f_m)}_{L^2_{\mu}},   
    \end{align*}
   which with definition \eqref{edelta} of $\mathcal E_\delta$ yields the first assertion in Lemma \ref{lemu8}.  
   
   It remains to prove the second assertion.  By  definition \eqref{eq2.1} of  $u_m$, 
\begin{equation}\label{yum}
	\partial_x^m\partial_yu = \partial_y u_m+\frac{\partial_y u}{1+f}\partial_x^mf  - \frac{\partial_y^2 u}{1+f}  \partial_x^{m-1}g+\frac{(\partial_y  u)\partial_yf}{(1+f)^2}  \partial_x^{m-1}g.
\end{equation}
Following the argument in \eqref{compu}  we can verify that
\begin{align*}
 	\Big\|\frac{\partial_y^2u}{1+f}\partial_x^{m-1}g\Big\|_{L^2_{\mu}} \leq C  \comi t^{\frac{1}{4}}   \norm{\partial_x^m f}_{L^2_{\mu}}\norm{\partial_y^2u}_{H^{1,0}_{\mu_{1\over2}}}\leq C\varepsilon \comi t^{\frac{\delta}{4}}  \norm{\partial_y^2\mathcal U}_{H^{1,0}_{\mu}}, 
\end{align*}
the last inequality using  Lemma  \ref{lemu0} and the fact that 
\begin{equation}\label{t14}
	 \comi t^{\frac{1}{4}}   \norm{\partial_x^m f}_{L^2_{\mu}} \leq    \comi t^{\frac{ \delta}{4}}    \comi t^{\frac{1-\delta}{4}}  \norm{\partial_x^m f}_{L^2_{\mu}}  \leq C \varepsilon   \comi t^{\frac{ \delta}{4}}  
\end{equation}
due to \eqref{ceps}. 
Similarly, 
\begin{align*}
&	\Big\|\frac{\partial_yu}{1+f}\partial_x^m f \Big\|_{L^2_{\mu}} \leq 2 \norm{\mu_{1\over4}\partial_yu}_{L^\infty} \norm{\partial_x^mf}_{L^2_{\mu_{1\over 2}}}\leq C\comi t^{1\over 4}\norm{\partial_y^2 u}_{H^{1,0}_{\mu_{3\over 4}}}\norm{\partial_x^mf}_{L^2_{\mu}}\\
&\qquad \leq  C\comi t^{1\over 4}\norm{\partial_y^2 \mathcal U}_{H^{1,0}_{\mu }}\norm{\partial_x^mf}_{L^2_{\mu}}\leq C\varepsilon \comi t^{\frac{\delta}{4}}  \norm{\partial_y^2\mathcal U}_{H^{1,0}_{\mu}} .\end{align*}
Moreover,    using Lemma \ref{lemv} and estimate \eqref{asslow}    as well as \eqref{t14},
\begin{align*}
      &\Big\|\frac{ (\partial_yu)\partial_yf}{(1+f)^2}\partial_x^{m-1}g \Big\|_{L^2_{\mu}} \leq 4 \norm{ \mu_{1\over 8}\partial_y u}_{L^\infty} \norm{ \mu_{1\over 8}  \partial_yf}_{L_x^\infty L_y^2 }\norm{  \mu_{1\over 4} \partial_x^{m-1}g}_{L_x^2 L^\infty_y }   \\
      &\qquad \leq C\comi t^{1\over4}\norm{ \partial_y^2 \mathcal U}_{H^{1,0}_\mu} \norm{ \partial_y  f}_{H^{1,0}_{\mu_{1\over4}}}\Big( \comi t^{1\over4}\norm{   \partial_x^{m}f}_{L_\mu^2}\Big)\leq C\varepsilon^2\delta^{-\frac12}\norm{ \partial_y^2 \mathcal U}_{H^{1,0}_\mu}.
   \end{align*} 
Combining the above estimates with \eqref{yum} yields that, for any $1\leq m\leq 8,$
\begin{equation*}
	\norm{\partial_x^m \partial_yu }_{L^2_\mu}\leq \norm{  \partial_yu_m }_{L^2_\mu}+C\varepsilon\delta^{-{ 1\over2}} \comi t^{\frac{\delta}{4}}     \norm{\partial_y^2\mathcal U}_{H^{1,0}_{\mu}},
\end{equation*}
and thus, by definition \eqref{ddelta} of $\mathcal D_\delta$ and the fact that $\eps\delta^{-1}$ is small, 
\begin{equation*}
	\delta^{1\over 2}\comi t^{\frac{1-\delta}{4}}\norm{\partial_x^m \partial_yu }_{L^2_\mu}\leq \big(\mathcal D_\delta(t)\big)^{1\over2}+C\eps\delta^{-1} \big(\delta\comi t^{\frac{7-\delta}{4}}\norm{\partial_y^2\mathcal U}_{H^{1,0}_{\mu}}\big)\leq C  \big(\mathcal D_\delta(t)\big)^{1\over2}.
\end{equation*}
Similarly for $\norm{\partial_x^m \partial_yf}_{L^2_\mu}.$ Thus the second  assertion  in Lemma  \ref{lemu8} follows. The proof   is  thus completed. 
 \end{proof}

 \begin{lemma}
 	[Technical lemma]\label{techlema}
 	 Let $\varphi\in H^{0,1}_\mu$ and $\psi\in L_\mu ^2$ satisfy that 	\begin{equation}\label{varpsi}
 		(\partial_t -\partial_y^2) \varphi= \psi,\quad \varphi \partial_y\varphi|_{y=0}=0. 
 	\end{equation}
 	Then we have
 	\begin{equation}\label{as1}
		\frac{d}{dt}\norm{\varphi}^2_{L^2_{\mu}}+\frac{1-\delta}{2\comi t}\norm{\varphi}^2_{L^2_{\mu}}+\delta\norm{ \partial_y\varphi}^2_{L^2_{\mu}}\leq 2 \big( \psi,\  \varphi \big)_{L^2_{\mu}},
	\end{equation}
	and  
\begin{equation}\label{as2}
		\frac{d}{dt}\Big(\comi t^{\frac{1-\delta}{2}}\norm{\varphi}^2_{L^2_{\mu}} \Big)+\delta \comi t^{\frac{1-\delta}{2}}\norm{ \partial_y\varphi}^2_{L^2_{\mu}}\leq 2 \comi t^{\frac{1-\delta}{2}}\big( \psi,\  \varphi \big)_{L^2_{\mu}}.
	\end{equation}
 \end{lemma}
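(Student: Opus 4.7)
\textbf{Proof plan for Lemma \ref{techlema}.} The natural strategy is a weighted energy estimate: multiply the equation \eqref{varpsi} by $2\mu\varphi$ and integrate over $\mathbb{R}_+^2$. I first set up the bookkeeping for the two ``extra'' terms created by the time-dependent weight $\mu=\exp(y^2/(4\comi t))$, namely $\partial_t\mu = -\frac{y^2}{4\comi t^2}\mu$ and $\partial_y\mu=\frac{y}{2\comi t}\mu$. Differentiating $\norm{\varphi}_{L^2_\mu}^2$ in time produces the correction
\[
\frac{d}{dt}\norm{\varphi}^2_{L^2_\mu}= 2(\partial_t\varphi,\varphi)_{L^2_\mu}-\frac{1}{4\comi t^2}\int_{\mathbb{R}_+^2} y^2\mu\,\varphi^2\,dxdy.
\]

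Next I handle the diffusion term. Using $\varphi\partial_y\varphi|_{y=0}=0$ and the fast decay at $y=+\infty$, an integration by parts gives
\[
2(-\partial_y^2\varphi,\varphi)_{L^2_\mu}=2\norm{\partial_y\varphi}^2_{L^2_\mu}+2\int_{\mathbb{R}_+^2}(\partial_y\mu)\,\varphi\,\partial_y\varphi\,dxdy.
\]
The cross term I rewrite as $\frac{1}{4\comi t}\int y\mu\,\partial_y(\varphi^2)\,dxdy$ and integrate by parts once more in $y$; using $\partial_y(y\mu)=\mu+\frac{y^2}{2\comi t}\mu$ this produces exactly $-\frac{1}{2\comi t}\norm{\varphi}^2_{L^2_\mu}-\frac{1}{4\comi t^2}\int y^2\mu\varphi^2\,dxdy$. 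The crucial observation is that the quartic-in-$y$ piece generated here \emph{cancels} the analogous term coming from $\partial_t\mu$ above. Collecting everything, the testing identity becomes
\[
\frac{d}{dt}\norm{\varphi}^2_{L^2_\mu}+2\norm{\partial_y\varphi}^2_{L^2_\mu}-\frac{1}{2\comi t}\norm{\varphi}^2_{L^2_\mu}=2(\psi,\varphi)_{L^2_\mu}.
\]

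To reach the form \eqref{as1} I interpolate between the diffusion and the negative-sign term $-\frac{1}{2\comi t}\norm{\varphi}^2_{L^2_\mu}$. Writing $2\norm{\partial_y\varphi}^2_{L^2_\mu}=\delta\norm{\partial_y\varphi}^2_{L^2_\mu}+(2-\delta)\norm{\partial_y\varphi}^2_{L^2_\mu}$ and invoking Lemma \ref{lema} with $\lambda=1$ (i.e.\ $\frac{1}{2\comi t}\norm{\varphi}^2_{L^2_\mu}\le \norm{\partial_y\varphi}^2_{L^2_\mu}$) on the second piece yields
\[
(2-\delta)\norm{\partial_y\varphi}^2_{L^2_\mu}\ge \frac{2-\delta}{2\comi t}\norm{\varphi}^2_{L^2_\mu}=\frac{1}{2\comi t}\norm{\varphi}^2_{L^2_\mu}+\frac{1-\delta}{2\comi t}\norm{\varphi}^2_{L^2_\mu},
\]
which, substituted back, gives \eqref{as1}. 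This Hardy-type step is the main structural point: the weighted Poincar\'e constant $1/(2\comi t)$ exactly absorbs the boundary-weight term and leaves a positive coercive remainder $\tfrac{1-\delta}{2\comi t}\norm{\varphi}^2_{L^2_\mu}$.

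For \eqref{as2} I simply compute the product rule
\[
\frac{d}{dt}\bigl(\comi t^{(1-\delta)/2}\norm{\varphi}^2_{L^2_\mu}\bigr)=\frac{1-\delta}{2}\comi t^{-(1+\delta)/2}\norm{\varphi}^2_{L^2_\mu}+\comi t^{(1-\delta)/2}\frac{d}{dt}\norm{\varphi}^2_{L^2_\mu},
\]
and insert \eqref{as1}; the first summand on the right cancels precisely the $\frac{1-\delta}{2\comi t}\norm{\varphi}^2_{L^2_\mu}$ contribution, leaving \eqref{as2}. I do not expect a serious obstacle here beyond the weight-cancellation bookkeeping; the only thing to verify carefully is that the decay of $\varphi$ in $y$ (guaranteed by $\varphi\in H^{0,1}_\mu$) justifies both integrations by parts, which is standard.
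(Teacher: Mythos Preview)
Your proof is correct and follows essentially the same route as the paper's: derive the energy identity $\frac{d}{dt}\norm{\varphi}^2_{L^2_\mu}+2\norm{\partial_y\varphi}^2_{L^2_\mu}-\frac{1}{2\comi t}\norm{\varphi}^2_{L^2_\mu}=2(\psi,\varphi)_{L^2_\mu}$ via the same two integrations by parts and cancellation of the $\frac{1}{4\comi t^2}\int y^2\mu\varphi^2$ terms, then split $2=\delta+(2-\delta)$ and apply Lemma~\ref{lema} with $\lambda=1$, and finally multiply by $\comi t^{(1-\delta)/2}$ for \eqref{as2}. One minor slip: the cross term should read $\frac{1}{2\comi t}\int y\mu\,\partial_y(\varphi^2)$ rather than $\frac{1}{4\comi t}$, but your stated output of that step and everything downstream is correct.
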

 
\begin{proof}
It suffices to prove the first assertion \eqref{as1}, since the second one \eqref{as2} will follow if we multiply \eqref{as1} by $\comi t^{\frac{1-\delta}{2}}$ and then use the fact that
\begin{align*}
\frac{d}{dt} \comi t^{\frac{1-\delta}{2}} =\comi t^{\frac{1-\delta}{2}} \frac{1-\delta}{2\comi t}.
\end{align*}
By virtue of the fact that
\begin{align*}
    \partial_t\mu =-\frac{y^2}{4\comi t^2}\mu,\quad \partial_y^2\mu=\frac{1}{2\comi t}\mu +\frac{y^2}{4\comi t^2}\mu,
\end{align*}
we compute
\begin{align*}
   2 \int_{\mathbb{R}^2_{+}}(\partial_t \varphi)\varphi \mu dx dy&= \frac{d}{dt}\norm{\varphi}^2_{L^2_{\mu}}-  \int_{\mathbb{R}^2_{+}} \varphi^2 (\partial_t\mu)  dxdy = \frac{d}{dt}\norm{\varphi}^2_{L^2_{\mu}}+\frac{ \norm{y\varphi}_{L_\mu^2} ^2}{4\comi t^2},
\end{align*}
and
\begin{align*}
  &-2\int_{\mathbb{R}^2_{+}}(\partial_y^2\varphi)\varphi \mu dxdy =2\norm{ \partial_y\varphi}^2_{L^2_{\mu}}+2\int_{\mathbb{R}^2_{+}}(\partial_y \varphi)\varphi \partial_y\mu dxdy \\
&\qquad  =2\norm{ \partial_y\varphi}^2_{L^2_{\mu}} - \int_{\mathbb{R}^2_{+}}\varphi^2\partial_y^2\mu dxdy   =2\norm{ \partial_y\varphi}^2_{L^2_{\mu}}-\frac{1}{2\comi t} \norm{\varphi}_{L_\mu^2} ^2-\frac{\norm{y\varphi}_{L_\mu^2} ^2}{4\comi t^2} .
  \end{align*}
As a result, we take $L_\mu^2$-product with $2\varphi$ in \eqref{varpsi} and then combine the two equations above to get
\begin{equation}\label{dtvarphi}
	 \frac{d}{dt}\norm{\varphi}^2_{L^2_{\mu}}+ 2\norm{ \partial_y\varphi}^2_{L^2_{\mu}}-\frac{1}{2\comi t}\norm{\varphi}^2_{L^2_{\mu}}= 2 \inner{\psi,\  \varphi}_{L^2_{\mu}}. 
\end{equation}
Moreover,  it follows from \eqref{ADD1} in Lemma \ref{lema} that
\begin{align*}
    \frac{1}{2\comi t}\norm{\varphi}^2_{L^2_{\mu}}\leq \norm{ \partial_y\varphi}^2_{L^2_{\mu}},
\end{align*}
and thus
\begin{align*}
2\norm{ \partial_y\varphi}^2_{L^2_{\mu}}= \delta \norm{ \partial_y\varphi}^2_{L^2_{\mu}}+\big (2- \delta  \big )\norm{ \partial_y\varphi}^2_{L^2_{\mu}} \geq   \delta  \norm{ \partial_y\varphi}^2_{L^2_{\mu}}+\frac{2-\delta}{2\comi t}\norm{\varphi}^2_{L^2_{\mu}}.
\end{align*}
Substituting the above estimate into \eqref{dtvarphi} yields the first assertion  in Lemma \ref{techlema}. The proof is thus completed.  
\end{proof}

\begin{remark}\label{rmkest}
	 If $\varphi\in H^{0,1}_\mu$ and $\psi\in L_\mu ^2$ satisfy that
	\begin{equation*} 
 		(\partial_t -\partial_y^2) \varphi+\frac{1}{\comi t}\varphi= \psi,\quad \varphi\partial_y\varphi|_{y=0}=0, 
 	\end{equation*}
 	then it follows from  \eqref{as1} in Lemma \ref{techlema} that
 	\begin{equation*}
 	\frac{d}{dt}\norm{\varphi}^2_{L^2_{\mu}}+\frac{5-\delta}{2\comi t}\norm{\varphi}^2_{L^2_{\mu}}+\delta\norm{ \partial_y\varphi}^2_{L^2_{\mu}}\leq 2 \big( \psi,\  \varphi \big)_{L^2_{\mu}}.
 	\end{equation*}
 Thus, similar to \eqref{as2}, it follows from the above estimate that
 \begin{equation*}
 	\frac{d}{dt}\Big(\comi t^{\frac{5-\delta}{2}}\norm{\varphi}^2_{L^2_{\mu}}\Big)+ \delta \comi t^{\frac{5-\delta}{2}}\norm{ \partial_y\varphi}^2_{L^2_{\mu}}\leq 2\comi t^{\frac{5-\delta}{2}} \big( \psi,\  \varphi \big)_{L^2_{\mu}}.
 \end{equation*}
\end{remark}

\section{Estimate on the auxiliary functions $u_m$ and $f_m$}\label{SecE1}

  This part is devoted to dealing with the terms $(u_m, f_m)$ in definitions \eqref{edelta} and \eqref{ddelta} of $\mathcal E_\delta $ and $\mathcal D_\delta$.    The main result can be stated as follows.  
  
  \begin{proposition}\label{propo2}
  Let $\varepsilon$ be the small constant given in Lemma \ref{lemu8}. Suppose  
   \begin{equation}\label{sassm}
 	 \forall\ t>0,\quad \mathcal E_\delta(t)+\frac{1}{2}\int_0^t\mathcal D_\delta (s) ds \leq 2\varepsilon^2.
\end{equation}
 Then 
  \begin{equation}\label{eq3.9}
      \begin{aligned}
          \frac{d}{dt} \sum_{ m\leq 8}  \comi t^{\frac{1-\delta}{2}}\norm{(u_m,f_m)}^2_{L^2_{\mu}}+\delta \sum_{ m\leq 8} \comi t^{\frac{1-\delta}{2}}\norm{(\partial_yu_m,\partial_yf_m)}^2_{L^2_{\mu}} \leq C\varepsilon\delta^{-2} \mathcal{D}_\delta.
      \end{aligned}
  \end{equation}
\end{proposition}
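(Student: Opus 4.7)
The plan is to apply the technical Lemma \ref{techlema}, and in particular assertion \eqref{as2}, to each auxiliary function $u_m$ and $f_m$ for $0 \leq m \leq 8$. The hypothesis $\varphi \partial_y \varphi|_{y=0} = 0$ is met because $g|_{y=0}=0$ and $\partial_x^m u|_{y=0} = \partial_x^m f|_{y=0} = 0$, so the boundary corrections in \eqref{eq2.1}--\eqref{eqfm} vanish at $y=0$. Writing \eqref{equmfm} in the form $(\partial_t - \partial_y^2) u_m = \psi_m^u$ and $(\partial_t - \partial_y^2) f_m = \psi_m^f$, where $\psi_m^u$ and $\psi_m^f$ gather the transport terms $-(u\partial_x + v\partial_y)u_m$, $-(u\partial_x + v\partial_y)f_m$, the coupling terms $((1+f)\partial_x + g\partial_y)f_m$, $((1+f)\partial_x + g\partial_y)u_m$, and the genuinely lower-order remainders produced in Appendix \ref{appumfm}, assertion \eqref{as2} gives
\begin{equation*}
\frac{d}{dt}\Big(\comi t^{\frac{1-\delta}{2}} \norm{(u_m,f_m)}^2_{L^2_{\mu}}\Big) + \delta \comi t^{\frac{1-\delta}{2}} \norm{(\partial_y u_m, \partial_y f_m)}^2_{L^2_{\mu}} \leq 2 \comi t^{\frac{1-\delta}{2}} \bigl[(\psi_m^u, u_m)_{L^2_\mu} + (\psi_m^f, f_m)_{L^2_\mu}\bigr].
\end{equation*}
After summation over $0 \leq m \leq 8$, the conclusion \eqref{eq3.9} reduces to bounding the right-hand side by $C\eps\delta^{-2}\mathcal D_\delta$.

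Two structural cancellations drive the argument. First, the transport contribution $\bigl((u\partial_x + v\partial_y) u_m, u_m\bigr)_{L^2_\mu}$, after integration by parts and use of $\partial_x u + \partial_y v = 0$, collapses to a weight-derivative expression of shape $-\tfrac12\int v\, u_m^2\, \partial_y\mu$, controllable via Lemma \ref{lemv} (to move $v/\comi t^{1/2}$ into $L^\infty_y$) together with \eqref{eq2.6}. Second, the cross coupling
\begin{equation*}
\bigl(((1+f)\partial_x + g\partial_y) f_m,\; u_m\bigr)_{L^2_\mu} + \bigl(((1+f)\partial_x + g\partial_y) u_m,\; f_m\bigr)_{L^2_\mu}
\end{equation*}
integrates by parts using $\partial_x(1+f) + \partial_y g = 0$, so that the derivatives of the coefficients cancel and only the weight-derivative contribution $-\int g\, u_m f_m\, \partial_y\mu$ survives. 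This residual is disposed of by extracting the smallness of $g$ (equivalently of $\partial_x f$) through Lemmas \ref{lemv} and \ref{lemu0} together with Corollary \ref{coro}, and absorbing the remaining quadratic form into the dissipation via \eqref{eq2.6}.

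Finally, each genuinely lower-order term in $\psi_m^u$ and $\psi_m^f$ is a product of derivatives of $u,f,v,g$ divided by positive powers of $1+f$ (bounded below by $\tfrac12$ thanks to \eqref{ublb}). The rule of thumb is: place the factor carrying the highest tangential index in $L^2_\mu$, the factor carrying a normal derivative in $L^\infty_y$ via Lemma \ref{lemv}, and extract an $\eps\delta^{-1/2}$ factor from one of the $L^\infty$ slots through \eqref{asslow} or \eqref{ceps}; the surviving quadratic form is then dominated by $\mathcal D_\delta$ for the top tangential derivatives of $(u,f)$ (Lemma \ref{lemu8}) or by $\comi t^{-1}\mathcal E_\delta \subset \mathcal D_\delta$ for lower ones (via \eqref{ADD1}). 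The main obstacle I expect is the bookkeeping of the $\delta$-weights: because the dissipation prefactor is only $\delta$ rather than $1$, terms arising from the correction $\frac{\partial_y u}{1+f}\partial_x^{m-1}g$ in \eqref{eq2.1}---whose differentiation produces $\partial_x^{m-1}g = -\int_0^y \partial_x^m f\, d\tilde y$, carrying the \emph{top} tangential derivative of $f$---must be rerouted through the divergence-free structure so that the top derivative either appears as $\partial_x^m f$ in the $L^2_\mu$ slot (controlled by $u_m,f_m$ through Lemma \ref{lemu8}) or as $\partial_y u_m, \partial_y f_m$ in the dissipation slot, and the penalty of this rerouting, combined with the two $\delta^{-1/2}$ factors produced by \eqref{asslow}, is precisely the $\delta^{-2}$ appearing in \eqref{eq3.9}. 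Once each remainder is controlled in this manner, summation over $0 \leq m \leq 8$ closes the estimate.
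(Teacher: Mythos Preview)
Your plan is correct and follows essentially the same route as the paper: apply assertion \eqref{as2} of Lemma \ref{techlema} to the evolution equations for $u_m,f_m$, dispose of the transport and cross-coupling contributions by integration by parts using the two divergence-free conditions (leaving only the weight-derivative residuals $\int v\,u_m^2\,\partial_y\mu$ and $\int g\,u_m f_m\,\partial_y\mu$ handled via Lemma \ref{lemv} and \eqref{eq2.6}), and then estimate the genuinely lower-order source terms. The paper organizes the last step by splitting the remainder into three explicit pieces $S_{m,1},S_{m,2},S_{m,3}$ (given in \eqref{sm1}--\eqref{sm3}) and treating each by product estimates of exactly the type you describe; your ``rule of thumb'' is an accurate summary of that calculation, and your accounting for the $\delta^{-2}$ loss is consistent with how it arises in the paper (one $\delta^{-1/2}$ from \eqref{asslow}, the rest from unpacking the $\delta$-weights in $\mathcal D_\delta$ via Lemma \ref{lemu8} and definition \eqref{ddelta}).
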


\begin{proof}  It suffices to consider the case of $ 1\leq m\leq 8$, since the energy  estimate for $(u_m, f_m)$ with $m=0$ is straightforward. 
For each fixed  $ 1\leq m\leq 8$,  the auxiliary functions  $u_m, f_m$ solve the following equations (see Appendix \ref{appumfm} for the derivation in detail):
\begin{equation}\label{dix3++}
\left\{
    \begin{aligned}
&\inner{\partial_t+u\partial_x+v\partial_y-\partial_y^2}u_m-\big((1+f)\partial_x+g\partial_y\big)f_m =\sum_{1\leq j\leq 3}S_{m,j},\\
&\inner{\partial_t+u\partial_x+v\partial_y-\partial_y^2}f_m-\big((1+f)\partial_x+g\partial_y\big)u_m =\sum_{1\leq j\leq 3}\tilde S_{m,j},
\end{aligned}
\right.
\end{equation}
where  $S_{m,j}$ and $\tilde S_{m,j}$ are given in \eqref{sm1}-\eqref{sm3t}. It suffices to perform estimates for the first evolution equation in \eqref{dix3++}.  To do so we recall  the representations of $S_{m,j}$ given in \eqref{sm1}-\eqref{sm3},  that is,
\begin{multline}\label{nsm1}
	S_{m,1} =\sum^m_{k=1}\binom{m}{k}\left[(\partial_x^k f )\partial_x^{m-k+1} f -(\partial_x^ku)\partial_x^{m-k+1}u\right]\\
 +\sum^{m-1}_{k=1}\binom{m}{k}\left[(\partial_x^kg)\partial_x^{m-k}\partial_y f -(\partial_x^kv)\partial_x^{m-k}\partial_yu\right],
\end{multline}
and 
\begin{multline}\label{nsm2}
	S_{m, 2} = \frac{\partial_yu}{1+f}\sum^{m-1}_{k=1}\binom{m-1}{k}\left[(\partial_x^k f )\partial_x^{m-k}v-(\partial_x^kg)\partial_x^{m-k}u\right]\\
 -\frac{\partial_yu}{1+f}\sum^{m-1}_{k=1}\binom{m-1}{k}\left[(\partial_x^ku)\partial_x^{m-k}g-(\partial_x^kv)\partial_x^{m-k} f \right],
\end{multline}
and
\begin{equation}\label{nsm3}
	\begin{aligned}
		S_{m,3}&= \left[\frac{g\partial_y f }{1+f}+2\partial_y\inner{\frac{\partial_yu}{1+f}}\right]\partial_x^m f -\frac{g\partial_yu}{1+f}\partial_x^mu-2\frac{(\partial_y f )^2\partial_yu}{(1+f)^3} \partial_x^{m-1}g\\
	&\quad+\left[ \frac{(\partial_y f )\partial_x f -(\partial_yu)\partial_xu}{1+f}+\frac{g(\partial_y f )^2-g(\partial_yu)^2+2(\partial_y f )\partial_y^2u}{(1+f) ^2}\right]\partial_x^{m-1}g.
	\end{aligned}
\end{equation} 
Applying Lemma \ref{techlema} to the first equation in \eqref{dix3++}, we have
 \begin{equation}\label{eq3.12}
\begin{aligned}
&\frac{d}{dt}\inner{\comi t^{\frac{1-\delta}{2}}\norm{u_m}^2_{L^2_{\mu}}}+\delta\comi t^{\frac{1-\delta}{2}} \norm{\partial_yu_m}^2_{L^2_{\mu}} \\
&\leq   2\comi t^{\frac{1-\delta}{2}}  \Big(\big[(1+f)\partial_x+g\partial_y\big] f_m,\ u_m\Big)_{L_\mu^2}+  2\comi t^{\frac{1-\delta}{2}}  \norm{u_m}_{L^2_{\mu}} \sum_{1\leq j\leq 3} \norm{S_{m,j}}_{L^2_{\mu}}\\
&\quad- 2\comi t^{\frac{1-\delta}{2}}  \Big((u\partial_x+v\partial_y)u_m,\ u_m\Big)_{L_\mu^2}.
\end{aligned}
\end{equation}
We claim that  
\begin{equation}\label{upsmj}
\begin{aligned}
  \sum_{1\leq j\leq 3}\norm{S_{m,j}}_{L_\mu^2} 
  \leq   C\comi t^{-\frac{1}{2}}\Big(\varepsilon \delta^{-\frac{1}{2}} \norm{(\partial_yu, \partial_yf)}_{H^{8,0}_{\mu}}+   \varepsilon \norm{\partial_y^2\mathcal{U}}_{H^{5,0}_{\mu}} + \delta^{-2} \mathcal D_\delta\Big),
	\end{aligned}
\end{equation} 
with the proof postponed. 
Estimate \eqref{upsmj} with Lemma \ref{lema} yields 
\begin{align*}
	&\comi t^{\frac{1-\delta}{2}}  \norm{u_m}_{L^2_{\mu}} \sum_{1\leq j\leq 3} \norm{S_{m,j}}_{L^2_{\mu}}\\
	&\leq C\comi t^{\frac{1-\delta}{ 2}} \norm{\partial_yu_m}_{L^2_{\mu}}\Big(\varepsilon\delta^{-\frac{1}{2}} \norm{(\partial_yu, \partial_yf)}_{H^{8,0}_{\mu}}+\varepsilon\norm{\partial_y^2\mathcal{U}}_{H^{5,0}_{\mu}}\Big)\\
    &\quad +  C\delta^{-2}\comi t^{-\frac{\delta}{2}}  \norm{u_m}_{L^2_{\mu}} \mathcal D_\delta \\
	& \leq C\varepsilon \delta^{-2}\mathcal D_\delta,
\end{align*}
the last inequality using definitions \eqref{edelta} and \eqref{ddelta} of $\mathcal E_\delta$ and $\mathcal D_\delta$, Lemma \ref{lemu8} and the smallness assumption \eqref{sassm}.  For the last term on the right-hand side of \eqref{eq3.12}, integrating by parts and then using the  divergence-free condition $\partial_xu+\partial_yv=0$, we have
\begin{equation}\label{Divergence}
\begin{aligned}
 &\left|\comi t^{\frac{1-\delta}{2}}  \Big((u\partial_x+v\partial_y)u_m,\ u_m\Big)_{L_\mu^2}\right|=\frac{1}{2}\comi t^{\frac{1-\delta}{2}}\left|\int_{\mathbb{R}^2_{+}}vu_m^2\partial_y\big(e^{\frac{y^2}{4\comi t}}\big)dxdy\right|\\
 &\leq C\comi t^{\frac{1-\delta}{2}}\norm{v}_{L^{\infty}}\norm{u_m}_{L^2_{\mu}}\Big\|\frac{y}{\comi t}u_m\Big\|_{L^2_{\mu}}\leq  C\comi t^{\frac{1-\delta}{2}+\frac14}  \norm{u}_{H^{2,0}_{\mu_{1\over2}}}\norm{u_m}_{L^2_{\mu}}\Big\|\frac{y}{\comi t}u_m\Big\|_{L^2_{\mu}} \\
 &\leq C\varepsilon \comi t^{\frac{1-\delta}{2}+\frac14}\comi t^{-\frac{5-\delta}{4}}\norm{u_m}_{L^2_{\mu}}\Big\|\frac{y}{\comi t}u_m\Big\|_{L^2_{\mu}} \leq C\varepsilon \comi t^{\frac{1-\delta}{2}}\norm{\partial_yu_m}_{L^2_{\mu}}^2\leq C\varepsilon\delta^{-1}\mathcal D_\delta,
\end{aligned}
\end{equation}
where in the second inequality we used Lemma \ref{lemv}   and in the last line we   used \eqref{asslow} and   \eqref{eq2.6}   as well as definition \eqref{ddelta} of $\mathcal D_\delta.$ 
Substituting the above estimates into \eqref{eq3.12} yields  
\begin{align*}
	&\frac{d}{dt}\inner{\comi t^{\frac{1-\delta}{2}}\norm{u_m}^2_{L^2_{\mu}}}+\delta\comi t^{\frac{1-\delta}{2}} \norm{\partial_yu_m}^2_{L^2_{\mu}} \\
 &\qquad\qquad\leq   2 \comi t^{\frac{1-\delta}{2}}  \Big(\big[(1+f)\partial_x+g\partial_y\big] f_m,\ u_m\Big)_{L_\mu^2}+C\varepsilon \delta^{-2}\mathcal D_\delta.
\end{align*} 
Similarly, repeating the argument above for the second equation in \eqref{dix3++} we have
\begin{align*}
&\frac{d}{dt}\inner{\comi t^{\frac{1-\delta}{2}}\norm{f_m}^2_{L^2_{\mu}}}+\delta\comi t^{\frac{1-\delta}{2}} \norm{\partial_yf_m}^2_{L^2_{\mu}} \\
& \qquad\qquad \leq   2 \comi t^{\frac{1-\delta}{2}}  \Big(\big[(1+f)\partial_x + g\partial_y\big]u_m,\ f_m\Big)_{L^2_{\mu}}+C\varepsilon \delta^{-2}\mathcal D_\delta.
\end{align*}
On the other hand, by a similar argument to that in  \eqref{Divergence}, we have
\begin{multline*}
\comi t^{\frac{1-\delta}{2}} \Big(\big[(1+f)\partial_x + g\partial_y\big ] f_m,\ u_m\Big)_{L^2_{\mu}}+\comi t^{\frac{1-\delta}{2}} \Big(\big [(1+f)\partial_x + g\partial_y\big ]u_m,\ f_m\Big)_{L^2_{\mu}}\\
=-\comi t^{\frac{1-\delta}{2}} \int_{\mathbb{R}^2_{+}}gf_mu_m\partial_y\big(e^{\frac{y^2}{4\comi t}}\big)dxdy\leq C\varepsilon\delta^{-1}\mathcal D_\delta.
\end{multline*}
As a result, combining the above estimates yields that, for any $1\leq m\leq 8, $ 
\begin{equation*}
 \frac{d}{dt}\inner{\comi t^{\frac{1-\delta}{2}}\norm{(u_m, f_m)}^2_{L^2_{\mu}}}+\delta\comi t^{\frac{1-\delta}{2}} \norm{(\partial_yu_m, \partial_yf_m)}^2_{L^2_{\mu}}
 \leq   C\varepsilon \delta^{-2}\mathcal D_\delta.   
\end{equation*}
Note the above estimate still holds true for $m=0$ by performing  straightforward energy estimate for system \eqref{eq2}.   Thus
 assertion \eqref{eq3.9} in Proposition \ref{propo2} follows. 
 
 So to complete the proof of Proposition \ref{propo2}, 
it remains to prove assertion \eqref{upsmj}, and we proceed through the following three steps.  In the following discussion, 
let $1\leq m\leq 8$ be fixed. 
 
    {\it Step  1  (Estimate on $S_{m,1}$)}. 
For any $k$ with $1\leq k\leq [\frac{m}{2}]\leq 4,$ we use Sobolev inequality   to compute  
\begin{equation*}
\begin{aligned}
	& \norm{(\partial_x^k f )\partial_x^{m+1-k} f }_{L^2_{\mu}} \leq  \norm{\mu_{\frac{1}{4}}\partial_x^k f }_{L_x^{\infty}L_y^2}\norm{\mu_{\frac{1}{4}}\partial_x^{m+1-k} f }_{L_x^2L_y^{\infty}}   \\
	&  \leq C\comi t^{\frac14}\norm{ f }_{H^{5,0}_{\mu_{\frac{1}{2}}}} \norm{ \partial_y f }_{H^{8,0}_{\mu}}\leq C  \varepsilon   \comi t^{\frac{1}{4}-\frac{5-\delta}{4} }\norm{\partial_y f }_{H^{8,0}_{\mu}}\leq C\varepsilon\comi t^{-\frac{1}{2}}\norm{\partial_y f }_{H^{8,0}_{\mu}}, 
	\end{aligned}
\end{equation*}
  the last line using  Lemma \ref{lemv} as well as \eqref{asslow}.  Similarly, for any  $k$ with $[\frac{m}{2}]+1\leq k\leq m\leq 8,$  we have  $m+1-k\leq 4$ and  thus 
  \begin{align*}
    \norm{(\partial_x^k f )\partial_x^{m+1-k} f }_{L^2_{\mu}}&\leq  \norm{\mu_{\frac{1}{4}}\partial_x^k f }_{L_x^2L_y^{\infty}}\norm{\mu_{\frac{1}{4}}\partial_x^{m+1-k} f }_{L_x^{\infty}L_y^2}\\
    &\leq C\comi t^{\frac{1}{4}}\norm{\partial_y f }_{H^{8,0}_{\mu}}\norm{ f }_{H^{5,0}_{\mu_{\frac12}}}\leq C\varepsilon\comi t^{-\frac{1}{2}}\norm{\partial_y f }_{H^{8,0}_{\mu}}.
\end{align*}
Combining the above estimates yields that
\begin{equation}\label{ff}
\begin{aligned}
	&\sum^m_{k=1}\binom{m}{k} \norm{(\partial_x^k f )\partial_x^{m+1-k} f }_{L^2_{\mu}}   \leq C\varepsilon\comi t^{-\frac{1}{2}}\norm{\partial_y f }_{H^{8,0}_{\mu}}.
	\end{aligned}  
\end{equation}
  In the same way we have that  
\begin{equation}\label{uu}
	\sum^m_{k=1}\binom{m}{k} \norm{(\partial_x^ku)\partial_x^{m+1-k}u}_{L^2_{\mu}}   \leq C\varepsilon\comi t^{-\frac{1}{2}}\norm{\partial_yu}_{H^{8,0}_{\mu}},
\end{equation}
and that,  observing $\partial_y g=-\partial_x f $  and $\norm{ f }_{H^{8,0}_{\mu}}\leq (2\comi t)^{1\over 2} \norm{ \partial_yf }_{H^{8,0}_{\mu}}$ by Lemma \ref{lema},
\begin{equation}\label{kgyf}
\begin{aligned}
	&	\sum^{m-1}_{k=1}\binom{m-1}{k} \norm{(\partial_x^kg)\partial_x^{m-k}\partial_y f }_{L^2_{\mu}}  \leq C	\sum^{[\frac{m-1}{2}]}_{k=1}  \norm{ \mu_{\frac{1}{4}} \partial_x^kg}_{L^\infty}\norm{\mu_{\frac{1}{4}}\partial_x^{m-k}\partial_y f }_{L^2}\\
	&\qquad \qquad\qquad\qquad+C	\sum_{k=[\frac{m-1}{2}]+1}^{m-1}  \norm{ \mu_{\frac{1}{4}} \partial_x^kg}_{L_x^2 L_y ^\infty}\norm{\mu_{\frac{1}{4}}\partial_x^{m-k}\partial_y f }_{L_x^\infty L_y^2}\\
	&\leq C\comi t^{\frac{1}{4}} \norm{ f }_{H^{5,0}_{\mu_{\frac{3}{4}}}}\norm{\partial_y f }_{H^{8,0}_{\mu}}+ C\comi t^{\frac{1}{4}}\norm{ f }_{H^{8,0}_{\mu}}\norm{\partial_y f }_{H^{5,0}_{\mu_{\frac{3}{4}}}}\\
	&\leq C\Big(\comi t^{\frac{1}{4}} \norm{ f }_{H^{5,0}_{\mu_{\frac{3}{4}}}}+  \comi t^{\frac{1}{4}}\comi t^{\frac{1}{2}} \norm{\partial_y f }_{H^{5,0}_{\mu_{\frac{3}{4}}}}\Big)\norm{\partial_y f }_{H^{8,0}_{\mu}}\\
	&\leq  C\varepsilon\delta^{-\frac{1}{2}}\comi t^{-\frac{1}{2}}\norm{\partial_y f }_{H^{8,0}_{\mu}},
\end{aligned}
\end{equation}
the last inequality following from \eqref{asslow}. 
Similarly, 
\begin{equation*}
	\sum^{m-1}_{k=1}\binom{m-1}{k} \norm{(\partial_x^kv)\partial_x^{m-k}\partial_yu}_{L^2_{\mu}} \leq C\varepsilon\delta^{-\frac{1}{2}}\comi t^{-\frac{1}{2}}\norm{\partial_yu}_{H^{8,0}_{\mu}}.
\end{equation*}
Combining the two estimates above and  estimates \eqref{ff}-\eqref{uu}  with  \eqref{nsm1}, we conclude that
\begin{equation}\label{essm1}
	S_{m,1}\leq C\varepsilon\delta^{-\frac{1}{2}}\comi t^{-\frac{1}{2}}\norm{(\partial_yu, \partial_yf)}_{H^{8,0}_{\mu}}.
\end{equation}

  {\it Step 2 (Estimate on $S_{m,2}$)}. In view of \eqref{ublb}, we  follow the argument in \eqref{kgyf}  to compute, using 
  Lemmas \ref{lemv} and \ref{lemu8} and  assertion \eqref{asslow},  
\begin{align*}
    &\sum_{k=1}^{m-1}\binom{m-1}{k} \Big\|\frac{\partial_yu}{1+f}(\partial_x^k f )\partial_x^{m-k}v\Big\|_{L^2_{\mu}}\leq C\sum_{k=1}^{m-1}\norm{\mu_{\frac{1}{4}}\partial_yu}_{L^{\infty}} \norm{ (\partial_x^k f )\partial_x^{m-k}v}_{L^2_{\mu_{1\over2}}}\\
    &\leq   C\comi t^{\frac{1}{4}}\norm{\partial_y^2u}_{H^{1,0}_{\mu_{\frac{3}{4}}}}\Big(\comi t^{\frac{1}{4}}\norm{ f }_{H^{5,0}_{\mu_{\frac{3}{4}}}}\norm{u}_{H^{8,0}_{\mu}}+\comi t^{\frac{1}{4}}\norm{ f }_{H^{8,0}_{\mu}}\norm{u}_{H^{5,0}_{\mu_{\frac{3}{4}}}}\Big)\\
    &\leq C\varepsilon^2\comi t^{\frac{1}{2}}\norm{\partial_y^2\mathcal{U}}_{H^{1,0}_{\mu}}\comi t^{-\frac{5-\delta}{4}} \comi t^{-\frac{1-\delta}{4}}\leq C\varepsilon \comi t^{-\frac{1}{2}} \norm{\partial_y^2\mathcal{U}}_{H^{5,0}_{\mu}}.\end{align*}
    Similarly, 
    \begin{multline*}
    	\sum_{k=1}^{m-1}\binom{m-1}{k}\inner{\Big\|\frac{\partial_yu}{1+f}(\partial_x^kg)\partial_x^{m-k}u\Big\|_{L^2_{\mu}}+\Big\|\frac{\partial_yu}{1+f}(\partial_x^ku)\partial_x^{m-k}g\Big\|_{L^2_{\mu}}}\\+\sum_{k=1}^{m-1}\binom{m-1}{k}\Big\|\frac{\partial_yu}{1+f}(\partial_x^kv)\partial_x^{m-k}f\Big\|_{L^2_{\mu}}\leq C\varepsilon\comi t^{-\frac{1}{2}} \norm{\partial_y^2\mathcal{U}}_{H^{5,0}_{\mu}}.
    \end{multline*}
    Combining the estimates above with \eqref{nsm2}, we conclude that 
\begin{equation}\label{essm2}
    \norm{S_{m,2}}_{L^2_{\mu}}\leq  C\varepsilon \comi t^{-\frac{1}{2}}\norm{\partial_y^2\mathcal{U}}_{H^{5,0}_{\mu}}.
\end{equation}

 {\it Step 3 (Estimate on $S_{m,3}$)}. It just follows from straightforward computation. In fact, using  \eqref{ublb} again yields 
 \begin{align*}
 &\Big\|\Big[\frac{g\partial_y f }{1+f}+2\partial_y\Big(\frac{  \partial_yu}{1+f}\Big)  \Big]\partial_x^m f  
\Big\|_{L^2_{\mu}} \\
 &\leq C\norm{\mu_{\frac{1}{4}}\partial_x^m f }_{L_x^2L_y^{\infty}} \big\|\mu_{\frac{1}{4}}\big(g\partial_y f+\partial_y^2u+(\partial_yu) \partial_y f\big)\big\|_{L_x^{\infty}L_y^2} \\
 &\leq C\comi t^{1\over 4}\norm{ \partial_y  f }_{H^{8,0}_{\mu}} \big\|\mu_{\frac{1}{4}}\big(g\partial_y f+\partial_y^2u+(\partial_yu) \partial_y f\big)\big\|_{L_x^{\infty}L_y^2}.
 \end{align*}
 On the other hand,  using Lemmas \ref{lemu0} and \ref{lemv}  we compute 
 \begin{equation*}
 	\norm{\mu_{\frac{1}{4}} \partial_y^2u}_{L_x^{\infty}L_y^2}
\leq C \norm{ \partial_y^2\mathcal U}_{H^{5,0}_\mu}, 
 \end{equation*}
 and 
 \begin{align*}
 	\norm{\mu_{\frac{1}{4}} (\partial_yu) \partial_y f }_{L_x^{\infty}L_y^2} &\leq  \norm{\mu_{\frac{1}{8}}\partial_yu}_{L^{\infty}}\norm{\mu_{\frac{1}{8}}\partial_y f }_{L_x^{\infty}L_y^2}\leq C\comi t^{1\over 4} \norm{\partial_y^2u}_{H^{5,0}_{\mu_{1\over2}}}\norm{ \partial_y f }_{H^{5,0}_{\mu_{1\over2}}}\\
 	&\leq C\comi t^{1\over 4} \norm{\partial_y^2\mathcal U}_{H^{5,0}_{\mu}}\norm{ \partial_y f }_{H^{5,0}_{\mu_{1\over2}}}\leq C\varepsilon \delta^{-\frac{1}{2}}\norm{ \partial_y^2\mathcal U}_{H^{5,0}_\mu},
 \end{align*}
 the last inequality using \eqref{asslow}. 
 Similarly,  using Lemma  \ref{lema} additionally, 
 \begin{equation*}
 \begin{aligned}
 	&\norm{\mu_{\frac{1}{4}} g\partial_y f  }_{L_x^{\infty}L_y^2}   \leq C\comi t^{1\over 4} \norm{f}_{H^{5,0}_{\mu_{3\over4}}}\norm{ \partial_y f }_{H^{5,0}_{\mu_{1\over2}}}\\
 	&\leq C \comi t^{{1\over 4}+\frac{1}{2}} \norm{f}_{H^{5,0}_{\mu_{3\over4}}}\norm{ \partial_y^2 f }_{H^{5,0}_{\mu_{1\over2}}}   
  \leq C\varepsilon \norm{ \partial_y^2 \mathcal F}_{H^{5,0}_\mu}.
 \end{aligned}
 \end{equation*}
As a result, combining the above estimates, we conclude that
\begin{equation*}
	 \Big\|\Big[\frac{g\partial_y f }{1+f}+2\partial_y\Big(\frac{  \partial_yu}{1+f}\Big)  \Big]\partial_x^m f  
\Big\|_{L^2_{\mu}}\leq C\delta^{-\frac{1}{2}}\comi t^{1\over 4}\norm{ \partial_y  f }_{H^{8,0}_{\mu}} \norm{ (\partial_y^2 \mathcal U,  \partial_y^2 \mathcal F)}_{H^{5,0}_\mu}.
\end{equation*}
Following the above argument with slight modification, we can verify that 
the $L_\mu^2$-norms of the remaining terms in \eqref{nsm3}   are bounded above from above by  
\begin{equation*}
	C\delta^{-\frac{1}{2}}\comi t^{1\over 4}\inner{\norm{\partial_yu}_{H^{8,0}_{\mu}}+\norm{f}_{H^{8,0}_{\mu}}} \norm{ (\partial_y^2 \mathcal U,  \partial_y^2 \mathcal F)}_{H^{5,0}_\mu},
\end{equation*}
and thus
\begin{equation*}
\begin{aligned}
	S_{m,3}&\leq C\delta^{-\frac{1}{2}}\comi t^{1\over 4} \Big(\norm{\partial_yu}_{H^{8,0}_{\mu}}+\norm{ f }_{H^{8,0}_{\mu}} +\norm{ \partial_y  f }_{H^{8,0}_{\mu}} \Big)\norm{ (\partial_y^2 \mathcal U,  \partial_y^2 \mathcal F)}_{H^{5,0}_\mu}\\
	&\leq C\delta^{-\frac{1}{2}}\comi t^{3\over 4}  \norm{ (\partial_y u, \partial_y  f )}_{H^{8,0}_{\mu}}\norm{ (\partial_y^2 \mathcal U,  \partial_y^2 \mathcal F)}_{H^{5,0}_\mu}\leq C\delta^{-2}\comi t^{-{1\over 2}}\mathcal D_\delta(t),
	\end{aligned}
\end{equation*}
the second inequality using Lemma  \ref{lema} and the last inequality following from Lemma  \ref{lemu8} as well as definition \ref{ddelta} of $\mathcal D_\delta$. This with \eqref{essm1} and \eqref{essm2}  yields \eqref{upsmj}.   Thus the proof of Proposition \ref{propo2} is  completed.
\end{proof}

\section{Estimate on the auxiliary functions $\mathcal U$ and $\mathcal F$}\label{SecE2}
In this section, we first derive the estimate on $(\mathcal{U},\mathcal{F})$  defined in \eqref{eq3}, and then  complete the proof of Theorem \ref{thm:priori}.   

\begin{proposition}\label{propo3}
 Let $\varepsilon$ be the small constant given in Lemma \ref{lemu8}. Suppose  that the small assumption \eqref{sassm} holds.
 Then 
 \begin{equation}\label{eq4.1}
      \begin{aligned}
          \frac{d}{dt}\inner{\comi t^{\frac{5-\delta}{2}}\norm{(\mathcal{U},\mathcal{F})}^2_{H^{5,0}_{\mu}}}+\delta\comi t^{\frac{5-\delta}{2}}\norm{(\partial_y\mathcal{U},\partial_y\mathcal{F})}^2_{H^{5,0}_{\mu}}\leq C\varepsilon \delta^{-1} \mathcal{D}_\delta.
      \end{aligned}
  \end{equation}
\end{proposition}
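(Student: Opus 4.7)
The strategy is to cast the evolution of $(\mathcal U,\mathcal F)$ into the framework of Remark~\ref{rmkest}. Setting $N_u := -u\partial_x u - v\partial_y u + (1+f)\partial_x f + g\partial_y f$ and defining $N_f$ analogously, so that $(\partial_t-\partial_y^2)u = N_u$ and $(\partial_t-\partial_y^2)f = N_f$, I would differentiate the representation \eqref{eq3} and invoke the identity \eqref{eq2.15} to derive
\begin{equation*}
	(\partial_t-\partial_y^2)\mathcal U + \frac{1}{\comi t}\mathcal U = N_u - \frac{y}{2\comi t}\int_y^{+\infty} N_u\,d\tilde y =: \Psi_u,
\end{equation*}
and analogously $(\partial_t-\partial_y^2)\mathcal F + \frac{1}{\comi t}\mathcal F = \Psi_f$. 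The damping $\frac{1}{\comi t}\mathcal U$ on the left is precisely what Remark~\ref{rmkest} needs in order to produce the enhanced decay rate $\comi t^{-(5-\delta)/4}$ carried by $\mathcal U,\mathcal F$. Applying $\partial_x^m$ for $0\leq m\leq 5$, invoking Remark~\ref{rmkest} with $\varphi=\partial_x^m\mathcal U$ (resp.\ $\partial_x^m\mathcal F$), and summing in $m$, reduces the proof of \eqref{eq4.1} to showing
\begin{equation*}
	\comi t^{(5-\delta)/2}\sum_{m\leq 5}\Bigl[\inner{\partial_x^m\Psi_u,\partial_x^m\mathcal U}_{L^2_\mu} + \inner{\partial_x^m\Psi_f,\partial_x^m\mathcal F}_{L^2_\mu}\Bigr] \leq C\varepsilon\delta^{-1}\mathcal D_\delta.
\end{equation*}

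\textbf{Controlling the source.} The local part $\partial_x^m N_u$ is expanded via Leibniz' rule into finitely many products of factors of the form $\partial_x^{a}\partial_y^{b}(u,v,f,g)$ with $a\leq m+1\leq 6$ and $b\leq 1$. When the maximum $m+1$ horizontal derivatives fall on one factor, Lemma~\ref{lemu8} trades $\norm{\partial_x^{m+1}u}_{L^2_\mu}$ and $\norm{\partial_x^{m+1}f}_{L^2_\mu}$ for $\norm{u_{m+1}}_{L^2_\mu}$ and $\norm{f_{m+1}}_{L^2_\mu}$, controlled by $(\mathcal E_\delta)^{1/2}\leq \sqrt{2}\varepsilon$ while their $\partial_y$-versions sit in $\mathcal D_\delta$. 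For factors involving $\partial_y u,\partial_y f,\partial_y^2 u,\partial_y^2 f$, Lemma~\ref{lemu0} converts them to $\partial_y\mathcal U,\partial_y\mathcal F,\partial_y^2\mathcal U,\partial_y^2\mathcal F$, all present in $\mathcal D_\delta$. Remaining low-derivative factors are placed in $L^\infty$ or $L^\infty_xL^2_y$ through Lemma~\ref{lemv} together with the sharp decay \eqref{asslow} of Corollary~\ref{coro}. This produces a bound on $\norm{\partial_x^m\Psi_u}_{L^2_\mu}+\norm{\partial_x^m\Psi_f}_{L^2_\mu}$ of the same shape as \eqref{upsmj}. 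A single application of Young's inequality (after using Lemma~\ref{lema} to exchange $L^2_\mu$-norms of $\mathcal U,\mathcal F$ for their $\partial_y$-counterparts when helpful) then absorbs the $\partial_y$-terms into the dissipation on the left-hand side of Remark~\ref{rmkest}, at the price of a factor $\delta^{-1}$.

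\textbf{Main obstacle.} The principal technical issues are twofold. First, the nonlocal tail $\frac{y}{2\comi t}\int_y^{+\infty}\partial_x^m N_u\,d\tilde y$ must be reduced to a local quantity; this I would do by applying Lemma~\ref{lema} in the form \eqref{eq2.6} to the antiderivative of $\partial_x^m N_u$, which together with the integrability \eqref{integr} yields a uniform-constant bound of the nonlocal term by $\norm{\partial_x^m N_u}_{L^2_\mu}$, so the nonlocal contribution reduces to the same product estimates as the local one. Second, as in Proposition~\ref{propo2}, the bookkeeping of time-weights across the many Leibniz products is delicate: one must verify case by case that the enhanced decay $\comi t^{-(5-\delta)/4}$ on $\mathcal U,\mathcal F$ is enough to balance the slower $\comi t^{-(1-\delta)/4}$ on $u_{m+1},f_{m+1}$ and yield a bound compatible with $\mathcal D_\delta$. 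This balance is exactly the conceptual payoff of working with the linearly-good unknowns $\mathcal U,\mathcal F$ at the lower-derivative level.
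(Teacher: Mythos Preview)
Your overall architecture is exactly right and matches the paper: derive the damped evolution $(\partial_t-\partial_y^2)\mathcal U+\frac{1}{\comi t}\mathcal U=\partial_x\mathcal F+R_{m,1}+R_{m,2}$, apply Remark~\ref{rmkest}, reduce the nonlocal tail $R_{m,2}$ to $R_{m,1}$ via \eqref{eq2.6}, and kill the linear cross-terms $(\partial_x^{m+1}\mathcal F,\partial_x^m\mathcal U)+(\partial_x^{m+1}\mathcal U,\partial_x^m\mathcal F)=0$ by symmetry (you left this last cancellation implicit in the combined sum, which is fine but worth making explicit, since your $N_u$ contains the linear piece $\partial_x f$).

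The genuine gap is in the product estimate for the quadratic source when $m=5$. Your plan is to put the factor carrying six $x$-derivatives (e.g.\ $\partial_x^6 u$ in $u\partial_x^6 u$) into $L^2_\mu$ and invoke Lemma~\ref{lemu8}, which only yields the slow decay $\comi t^{-(1-\delta)/4}$. If you then place the low factor in $L^\infty$ via Lemma~\ref{lemv} and Corollary~\ref{coro}, the bookkeeping leaves a surplus power of $\comi t$ of order at least $\comi t^{\delta/4}$ (and up to $\comi t^{1/2+\delta/4}$ depending on how you distribute), so the contribution cannot be absorbed into $C\varepsilon\delta^{-1}\mathcal D_\delta$ pointwise in $t$. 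Switching which factor carries the $\partial_y$, or integrating by parts in $x$, does not cure this: six derivatives always land somewhere with only $H^{8,0}$-type decay.

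The paper closes this gap not with Lemma~\ref{lemu8} but with an interpolation between the two decay scales,
\[
\norm{\partial_x u}_{H^{5,0}_{\mu_{3/4}}}\leq C\norm{u}_{H^{8,0}_{\mu_{3/4}}}^{1/3}\norm{u}_{H^{5,0}_{\mu_{3/4}}}^{2/3}\leq C\varepsilon\,\comi t^{-\frac{11}{12}+\frac{\delta}{4}},
\]
so that the six-derivative factor inherits two thirds of the fast $\comi t^{-(5-\delta)/4}$ rate. This extra $\comi t^{-2/3}$ is precisely what is needed to reach the clean bound $\norm{R_{m,1}}_{L^2_\mu}\leq C\varepsilon\comi t^{-1/2}\norm{(\partial_y\mathcal U,\partial_y\mathcal F)}_{H^{5,0}_\mu}$, after which one application of Lemma~\ref{lema} closes as you described. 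Add this interpolation step and your argument goes through.
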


\begin{proof} {\it Step 1).} 
Note that $\mathcal U$ and $\mathcal F$ satisfy the following equations (see Appendix \ref{appendix1} for the derivation):
 \begin{multline}\label{eq4.2}
        (\partial_t-\partial_y^2)\mathcal U+\frac{1}{\comi t} \mathcal{U}-\partial_x \mathcal{F}=-(u\partial_x+v\partial_y)u+\big(f\partial_x+g\partial_y\big) f  \\
        +\frac{y}{2\comi t}\int^{+\infty}_y\Big[(u\partial_x+v\partial_y)u - \big(f \partial_x+g\partial_y\big) f \Big]d\tilde y  
\end{multline}
and  
\begin{multline}\label{evf}
	(\partial_t-\partial_y^2)\mathcal F+\frac{1}{\comi t} \mathcal{F}-\partial_x \mathcal{U}=-(u\partial_x+v\partial_y)  f+\big(f\partial_x+g\partial_y\big)u \\
        +\frac{y}{2\comi t}\int^{+\infty}_y\Big[(u\partial_x+v\partial_y)  f  - \big(f\partial_x+g\partial_y\big)u\Big]d\tilde y.
\end{multline}
Observe 
the two equations above are supplemented with   the boundary condition that 
\begin{align}\label{eq4.3}
    \mathcal{U}|_{y=0}= \mathcal{F}|_{y=0}=0.
\end{align}
We first perform estimates for $\mathcal U$.   For any  $m\in\mathbb Z_+$ with $0\leq m\leq 5$, applying $\partial_x^m$ to the equation  \eqref{eq4.2} yields
\begin{equation}\label{umrm}
        (\partial_t-\partial_y^2)\partial_x^m\mathcal{U}+\frac{1}{\comi t}\partial_x^m\mathcal{U}  =\partial_x^{m+1}\mathcal{F}+R_{m,1}+R_{m,2}
      \end{equation} 
      with 
      \begin{equation}\label{rm1}
      \left\{
      \begin{aligned} 
  R_{m,1} =&    \partial_x^m\Big [-(u\partial_x+v\partial_y)u+\big(f \partial_x+g\partial_y\big) f \Big ],  \\ 
     R_{m,2}  = &\frac{y}{2\comi t}\int^{+\infty}_y\partial_x^m\Big[(u\partial_x+v\partial_y)u - \big(f\partial_x+g\partial_y\big) f \Big]d\tilde y.    \end{aligned}
     \right.
\end{equation}
We take the $L^2_{\mu}$-product with $\partial_x^m\mathcal{U}$  in \eqref{umrm} and then observe Remark \ref{rmkest}, to obtain that
\begin{equation}\label{eq4.5}
    \begin{aligned}
&\frac{d}{dt}\inner{\comi t^{\frac{5-\delta}{2}}\norm{\partial_x^m\mathcal{U}}^2_{L^2_{\mu}}}+\delta\comi t^{\frac{5-\delta}{2}}\norm{\partial_x^m\partial_y\mathcal{U}}^2_{L^2_{\mu}}\\
&\qquad   \leq  2 \comi t^{\frac{5-\delta}{2}}\inner{\partial_x^{m+1}\mathcal{F},\ \partial_x^m \mathcal{U}}_{L^2_{\mu}}+ 2\comi t^{\frac{5-\delta}{2}}\sum_{1\leq j\leq 2} \norm{R_{m,j}}_{L^2_{\mu}}\norm{\partial_x^m\mathcal{U}}_{L^2_{\mu}}.
    \end{aligned}
\end{equation}

{\it Step 2).} In this step  we will deal with the last term on the right-hand side of \eqref{eq4.5}, and   prove  that
\begin{equation}\label{rm}
\sum_{1\leq j\leq 2}\norm{R_{m,j}}_{L^2_{\mu}}\leq C\varepsilon\comi t^{-\frac{1}{2}}\norm{(\partial_y\mathcal{U}, \partial_y\mathcal{F})}_{H^{5,0}_{\mu}}.
\end{equation}  
For each $0\leq m \leq 5$ we have  $  \norm{\partial_x^m(u\partial_xu)}_{L^2_{\mu}}
   \leq \norm{  (\mu_{1\over4}u) \mu_{1\over4}\partial_xu }_{H_x^{5} L_y^2}$ and thus
\begin{equation}\label{uxu}
    \norm{\partial_x^m(u\partial_xu)}_{L^2_{\mu}}
   \leq C \norm{   \mu_{1\over4}u  }_{H_x^{5}L_y^\infty}\norm{   \mu_{1\over4}\partial_xu }_{H_x^{5}L_y^2}  
    \leq C\comi t^{\frac{1}{4}} \norm{\partial_yu}_{H^{5,0}_{\mu_{3/4}}}\norm{\partial_xu}_{H^{5,0}_{\mu_{3/4}}},
\end{equation}
the last inequality using Lemma \ref{lemv}. For the last term on the right-hand side, it follows from the interpolation inequality   that
\begin{equation}\label{eq4.6}
\begin{aligned}
   \norm{\partial_xu}_{H^{5,0}_{\mu_{3/4}}}&\leq  \norm{u}_{H^{6,0}_{\mu_{3/4}}} \leq C\norm{u}_{H^{8,0}_{\mu_{3/4}}}^{\frac{1}{3}}\norm{u}_{H^{5,0}_{\mu_{3/4}}}^{\frac{2}{3}} \\
   &\leq C\varepsilon\comi t^{-\frac{1-\delta}{4}\times\frac{1}{3}}\comi t^{-\frac{5-\delta}{4}\times\frac{2}{3}}\leq C\varepsilon\comi t^{-\frac{11}{12}+\frac{\delta}{4}}, 
\end{aligned}
\end{equation}
where    in the last line   we  use  Lemma  \ref{lemu8} and  assertion \eqref{asslow}.    
As a result,  we substitute \eqref{eq4.6} into \eqref{uxu} and then   use Lemma \ref{lemu0} as well as the fact that $0<\delta\leq \frac{1}{25}$, to conclude  that
\begin{equation}
\begin{aligned}\label{eq4.7}
    \norm{\partial_x^m(u\partial_xu)}_{L^2_{\mu}} \leq   C\varepsilon\comi t^{-\frac{11}{12}+\frac{\delta}{4}} \comi t^{1\over 4}\norm{\partial_yu}_{H^{5,0}_{\mu_{3/4}}}\leq   C\varepsilon\comi t^{-\frac{1}{2}}  \norm{\partial_y\mathcal U}_{H^{5,0}_{\mu}} .
\end{aligned}
\end{equation}
Following the above argument with slight modification, we can verify that,   
for any $0\leq m \leq 5$,
\begin{multline*}
    \norm{\partial_x^m(v\partial_yu)}_{L^2_{\mu}}\leq C\norm{\mu_{1/4} v}_{H_x^{5}L_y^\infty}  \norm{\mu_{1/4}\partial_yu}_{H_x^{5}L_y^2} \\
     \leq C\comi t^{1\over 4} \norm{\partial_xu}_{H^{5,0}_{\mu_{3/4}}}  \norm{ \partial_yu}_{H^{5,0}_{\mu_{3/4}}}
    \leq C \varepsilon \comi t^{-{1\over 2}}   \norm{ \partial_y\mathcal U}_{H^{5,0}_{\mu}},
\end{multline*}
where  the first inequality in   the last line  follows from Lemma \ref{lemv}  and   the last inequality holds because of  \eqref{eq4.6}. 
Similarly,
\begin{equation*}
	 \norm{\partial_x^m(f\partial_xf)}_{L^2_{\mu}} +\norm{\partial_x^m(g\partial_yf)}_{L^2_{\mu}}
    \leq C \varepsilon \comi t^{-{1\over 2}}   \norm{ \partial_y\mathcal F}_{H^{5,0}_{\mu}}.
\end{equation*} 
 Combining the above estimates and \eqref{eq4.7} with \eqref{rm1}   yields
\begin{align}\label{eq4.9}
    \norm{R_{m,1}}_{L^2_{\mu}} \leq C\varepsilon\comi t^{-\frac{1}{2}}\norm{(\partial_y\mathcal{U}, \partial_y\mathcal{F})}_{H^{5,0}_{\mu}}.
\end{align}
In view of \eqref{rm1},  we apply  \eqref{eq2.6} in Lemma \ref{lema} with 
\begin{equation*}
	h=\int^{+\infty}_y\partial_x^m\Big[(u\partial_x+v\partial_y)u - \big(f \partial_x+g\partial_y\big) f \Big]d\tilde y, 
\end{equation*}
to obtain that, observing $\partial_yh=R_{m,1}, $  
\begin{align*}
    \norm{R_{m,2}}_{L^2_{\mu}}= \Big\| \frac{y}{2\comi t} h\Big\|_{L^2_{\mu}}  \leq C   \norm{R_{m,1}}_{L^2_{\mu}},
\end{align*}
 which with \eqref{eq4.9} yields \eqref{rm}. 

{\it Step 3).}  
For each  $m\leq 5,$ using  \eqref{rm} and Lemma \ref{lema}   yields  
\begin{align*}
 \comi t^{\frac{5-\delta}{2}}\sum_{1\leq j\leq 2} \norm{R_{m,j}}_{L^2_{\mu}}\norm{\partial_x^m\mathcal{U}}_{L^2_{\mu}}
 &\leq C\varepsilon  \comi t^{\frac{5-\delta}{2}} \comi t^{-\frac{1}{2}} \norm{(\partial_y\mathcal{U}, \partial_y\mathcal{F})}_{H^{5,0}_{\mu}}
 \norm{ \mathcal{U}} _{H^{5,0}_{\mu}}\\
 &\leq C\varepsilon  \comi t^{\frac{5-\delta}{2}}  \norm{(\partial_y\mathcal{U}, \partial_y\mathcal{F})}_{H^{5,0}_{\mu}}^2\leq C\eps\delta^{-1} \mathcal D_{\delta},
 \end{align*}
the last inequality using definition \eqref{ddelta} of $ \mathcal D_{\delta}$.  Substituting the above estimate into \eqref{eq4.5},  we conclude that
\begin{equation*}
	    \begin{aligned}
&\frac{d}{dt}\inner{\comi t^{\frac{5-\delta}{2}}\norm{\partial_x^m\mathcal{U}}^2_{L^2_{\mu}}}+\delta\comi t^{\frac{5-\delta}{2}}\norm{\partial_x^m\partial_y\mathcal{U}}^2_{L^2_{\mu}}\\
&\qquad \qquad \leq  2 \comi t^{\frac{5-\delta}{2}}\inner{\partial_x^{m+1}\mathcal{F},\ \partial_x^m \mathcal{U}}_{L^2_{\mu}}+C\eps\delta^{-1} \mathcal D_{\delta}.
    \end{aligned}
\end{equation*}
 Similarly,  we repeat the above argument for the evolution equation \eqref{evf} of $\mathcal F$ to conclude that 
\begin{equation*}
	    \begin{aligned}
&\frac{d}{dt}\Big(\comi t^{\frac{5-\delta}{2}}\norm{\partial_x^m\mathcal{F}}^2_{L^2_{\mu}}\Big)+\delta\comi t^{\frac{5-\delta}{2}}\norm{\partial_x^m\partial_y\mathcal{F}}^2_{L^2_{\mu}}  \\
&\qquad\qquad \leq  2 \comi t^{\frac{5-\delta}{2}}\inner{\partial_x^{m+1}\mathcal{U},  \partial_x^m \mathcal{F}}_{L^2_{\mu}}+ C\varepsilon \delta^{-1} \mathcal D_\delta.
    \end{aligned}
\end{equation*}
Combining the two estimates above and observing the fact that
\begin{equation*}
	\inner{\partial_x^{m+1}\mathcal{F},\ \partial_x^m \mathcal{U}}_{L^2_{\mu}}+\inner{\partial_x^{m+1}\mathcal{U},  \partial_x^m \mathcal{F}}_{L^2_{\mu}}=0,
\end{equation*}
we obtain   the   assertion  in Proposition \ref{propo3}.   The proof of   is thus completed.
\end{proof}

\begin{proposition}\label{propo4} Under the hypothesis of Proposition \ref{propo3}, we have
  \begin{multline}\label{eq4.14}
	\frac{d}{dt}\inner{\frac{\delta}{2}  \comi t^{\frac{7-\delta}{2}}\norm{ (\partial_y\mathcal{U}, \partial_y\mathcal{F})}^2_{H^{5,0}_{\mu}}}+ \frac{\delta^2}{2}\comi t^{\frac{7-\delta}{2}}\norm{(\partial_y^2\mathcal{U}, \partial_y^2\mathcal{F})}^2_{H^{5,0}_{\mu}}\\
	\leq \frac{\delta}{2} \comi t^{\frac{5-\delta}{2}}\norm{ (\partial_y\mathcal{U},\partial_y\mathcal{F})}^2_{H^{5,0}_{\mu}}+C\varepsilon  \delta^{-1} \mathcal D_\delta.
\end{multline}
\end{proposition}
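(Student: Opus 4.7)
\textbf{Proof plan for Proposition \ref{propo4}.} My plan is to apply $\partial_y$ to the $\partial_x^m\mathcal{U}$-equation \eqref{umrm} and to its $\mathcal{F}$-counterpart, and then to run a weighted $L^2_{\mu}$ energy estimate at tangential order $m\le 5$ but with the heavier time-weight $\comi t^{(7-\delta)/2}$. Since $\partial_y$ commutes with $\partial_t-\partial_y^2+\frac{1}{\comi t}$, the function $\varphi=\partial_x^m\partial_y\mathcal{U}$ satisfies
\[
(\partial_t-\partial_y^2)\varphi+\frac{1}{\comi t}\varphi=\partial_x^{m+1}\partial_y\mathcal{F}+\partial_y R_{m,1}+\partial_y R_{m,2},
\]
and analogously for $\mathcal{F}$, so the problem reduces to an $L^2_{\mu}$ estimate for a damped heat equation with source.

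Two preliminaries are needed before Lemma \ref{techlema}/Remark \ref{rmkest} can be applied. First, the boundary identity $(\partial_x^m\partial_y\mathcal{U})(\partial_x^m\partial_y^2\mathcal{U})|_{y=0}=0$ must be checked: evaluating \eqref{eq4.2} at $y=0$ and using \eqref{eq4.3} together with $(u,v,f,g)|_{y=0}=(0,0,0,0)$, every term on the right-hand side of \eqref{eq4.2} vanishes at $y=0$ (the nonlocal piece vanishes trivially thanks to its $y$-prefactor), hence $\partial_y^2\mathcal{U}|_{y=0}=0$, which is preserved by $\partial_x^m$; likewise for $\mathcal{F}$. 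Second, to produce the weight $\comi t^{(7-\delta)/2}$ rather than the $\comi t^{(5-\delta)/2}$ supplied directly by the second formula in Remark \ref{rmkest}, I start from the first inequality of Remark \ref{rmkest},
\[
\frac{d}{dt}\norm{\varphi}^2_{L^2_{\mu}}+\frac{5-\delta}{2\comi t}\norm{\varphi}^2_{L^2_{\mu}}+\delta\norm{\partial_y\varphi}^2_{L^2_{\mu}}\le 2\inner{\psi,\varphi}_{L^2_{\mu}},
\]
and multiply it by $\comi t^{(7-\delta)/2}$; the mismatch $\frac{7-\delta}{2}-\frac{5-\delta}{2}=1$ between the time derivative of the weight and the available damping leaves exactly $\comi t^{(5-\delta)/2}\norm{\varphi}^2_{L^2_{\mu}}$ on the right. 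After multiplying by $\delta/2$ and summing over $0\le m\le 5$ and over $\mathcal U,\mathcal F$, this reproduces both the dissipation $\frac{\delta^2}{2}\comi t^{(7-\delta)/2}\norm{(\partial_y^2\mathcal U,\partial_y^2\mathcal F)}^2_{H^{5,0}_{\mu}}$ and the linear term $\frac{\delta}{2}\comi t^{(5-\delta)/2}\norm{(\partial_y\mathcal U,\partial_y\mathcal F)}^2_{H^{5,0}_{\mu}}$ appearing in \eqref{eq4.14}; the cross contributions $\inner{\partial_x^{m+1}\partial_y\mathcal{F},\partial_x^m\partial_y\mathcal{U}}_{L^2_{\mu}}+\inner{\partial_x^{m+1}\partial_y\mathcal{U},\partial_x^m\partial_y\mathcal{F}}_{L^2_{\mu}}$ cancel by integration by parts in $x$ exactly as in Proposition \ref{propo3}.

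The main obstacle is the nonlinear source bound
\[
\delta\comi t^{(7-\delta)/2}\sum_{j=1,2}\inner{\partial_y R_{m,j},\ \partial_x^m\partial_y\mathcal{U}}_{L^2_{\mu}}\le C\varepsilon\delta^{-1}\mathcal D_\delta,
\]
and likewise for the $\mathcal{F}$-equation. Differentiating \eqref{rm1} in $y$ and using $\partial_y v=-\partial_x u$, $\partial_y g=-\partial_x f$ produces two kinds of terms: (i) first-derivative products such as $u\partial_x\partial_y u$, $(\partial_y u)(\partial_x u)$, $(\partial_x u)(\partial_y u)$ and their $(f,g)$-analogs, which are estimated exactly by the Sobolev/weight argument of Steps 1--2 of the proof of Proposition \ref{propo3} (using Lemma \ref{lemv}, Lemma \ref{lemu8} and Corollary \ref{coro}); and (ii) genuinely second-derivative terms $v\partial_y^2 u$ and $g\partial_y^2 f$. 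For (ii), Lemma \ref{lemu0} converts $\partial_y^2 u,\partial_y^2 f$ into $\partial_y^2\mathcal U,\partial_y^2\mathcal F$, Lemma \ref{lemv} pulls $v,g$ out in $L^\infty$ at the cost of a $\comi t^{1/4}$-factor which is absorbed by the decay of $(u,f)$ from Corollary \ref{coro} under the smallness \eqref{sassm}, and Cauchy--Schwarz against $\partial_x^m\partial_y\mathcal{U}$ places the remaining $\partial_y^2\mathcal{U}$-factor into the dissipation $\frac{\delta^2}{2}\comi t^{(7-\delta)/2}\norm{\partial_y^2\mathcal U}^2_{H^{5,0}_{\mu}}$ with at most a $\delta^{-1}$ overhead. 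The nonlocal source $\partial_y R_{m,2}$ reduces to the local one via \eqref{eq2.6} of Lemma \ref{lema}, so after summation \eqref{eq4.14} follows.
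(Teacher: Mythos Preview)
Your plan is correct and matches the paper's proof in its overall architecture: differentiate \eqref{umrm} in $y$, verify the boundary identity $\partial_y^2\mathcal U|_{y=0}=0$ so that Remark~\ref{rmkest} applies to $\varphi=\partial_x^m\partial_y\mathcal U$, multiply by $\comi t^{(7-\delta)/2}$ (absorbing the damping mismatch into the $\comi t^{(5-\delta)/2}\norm{\partial_y\mathcal U}^2$ term), cancel the cross terms $\inner{\partial_x^{m+1}\partial_y\mathcal F,\partial_x^m\partial_y\mathcal U}+\inner{\partial_x^{m+1}\partial_y\mathcal U,\partial_x^m\partial_y\mathcal F}$, and finally multiply by $\delta/2$.

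The only substantive difference is how you treat the source $\inner{\partial_y R_{m,j},\partial_x^m\partial_y\mathcal U}_{L^2_\mu}$. You expand $\partial_y R_{m,1}$ via the product rule (generating first-derivative products and the genuinely second-order pieces $v\partial_y^2 u$, $g\partial_y^2 f$), then estimate each piece with Lemmas~\ref{lemv}, \ref{lemu0}, Corollary~\ref{coro} and the interpolation \eqref{eq4.6}; for $\partial_y R_{m,2}$ you reduce to $\partial_y R_{m,1}$ through \eqref{eq2.6}. The paper instead integrates by parts in $y$ once, using $R_{m,j}|_{y=0}=0$ (which it records alongside $\partial_y^2\mathcal U|_{y=0}=0$), to move $\partial_y$ onto $\partial_x^m\partial_y\mathcal U$; combined with \eqref{eq2.6} this yields $\big|\inner{\partial_y R_{m,j},\partial_x^m\partial_y\mathcal U}_{L^2_\mu}\big|\le C\norm{R_{m,j}}_{L^2_\mu}\norm{\partial_x^m\partial_y^2\mathcal U}_{L^2_\mu}$, so the already-proved bound \eqref{rm} closes the estimate immediately. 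Your route works and gives the same $C\varepsilon\delta^{-1}\mathcal D_\delta$, but it re-derives at the $\partial_y$-level what the paper gets for free from \eqref{rm}; the paper's integration-by-parts shortcut is worth noting since it avoids the separate analysis of $v\partial_y^2 u$ and $g\partial_y^2 f$ altogether.
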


\begin{proof}
For any $0\leq m\leq 5$, applying $\partial_y$ to   equation  \eqref{umrm} of $\partial_x^m \mathcal{U}$ yields
\begin{equation}\label{yufr}
(\partial_t-\partial_y^2)\partial_x^m\partial_y \mathcal{U} +\frac{1}{\comi t} \partial_x^m\partial_y \mathcal{U}  = \partial_x^{m+1} \partial_y\mathcal{F}+\partial_y R_{m,1}  + \partial_y R_{m,2}, 
\end{equation}
recalling $R_{m,1}$ and $R_{m,2}$ are given in \eqref{rm1}. 
By the boundary condition in \eqref{eq2}, the trace at $y=0$ of the source term in \eqref{eq4.2} is equal to $0$. This with  \eqref{eq4.3} yields that, taking  $y=0$ in equation  \eqref{eq4.2},  
\begin{equation*}
	\partial_y^2\mathcal U|_{y=0}=0\  \textrm{ and }\ R_{m,1}|_{y=0} =R_{m,2}|_{y=0}=0.
\end{equation*} 
 Thus we take the $L^2_{\mu}$-product with $\partial_x^m\partial_y\mathcal{U}$ on both sides of \eqref{yufr} and then apply Remark \ref{rmkest} to get
 \begin{equation*}     \begin{aligned}
       & \frac{d}{dt}\norm{\partial_x^m\partial_y\mathcal{U}}^2_{L^2_{\mu}}+\frac{5-\delta}{2\comi t}\norm{\partial_x^m\partial_y\mathcal{U}}^2_{L^2_{\mu}}+ \delta\norm{\partial_x^m\partial_y^2\mathcal{U}}^2_{L^2_{\mu}}\\
        &\leq  2\inner{\partial_x^{m+1}\partial_y\mathcal{F},\ \partial_x^m\partial_y \mathcal{U}}_{L^2_{\mu}} +2\sum_{1\leq j\leq 2}\inner{\partial_y R_{m,j},\  \partial_x^m\partial_y \mathcal{U}}_{L^2_{\mu}},
    \end{aligned}
\end{equation*}
or equivalently, 
\begin{equation*} 
    \begin{aligned}
       & \frac{d}{dt}\norm{\partial_x^m\partial_y\mathcal{U}}^2_{L^2_{\mu}}+\frac{7-\delta}{2\comi t}\norm{\partial_x^m\partial_y\mathcal{U}}^2_{L^2_{\mu}}+\delta\norm{\partial_x^m\partial_y^2\mathcal{U}}^2_{L^2_{\mu}}\\
        &\leq \frac{1}{\comi t}\norm{\partial_x^m\partial_y\mathcal{U}}^2_{L^2_{\mu}}+ 2\inner{\partial_x^{m+1}\partial_y\mathcal{F},\ \partial_x^m\partial_y \mathcal{U}}_{L^2_{\mu}} +2\sum_{1\leq j\leq 2}\norm{R_{m,j}}_{L^2_{\mu}}\norm{\partial_x^m\partial_y^2\mathcal{U}}_{L^2_{\mu}}.
    \end{aligned}
\end{equation*}
Multiplying the above equation  by $\comi t^{\frac{7-\delta}{2}}$ and observing 
\begin{equation*}
	\frac{d}{dt}\comi t^{\frac{7-\delta}{2}}= \comi t^{\frac{7-\delta}{2}}\frac{7-\delta}{2\comi t},
\end{equation*}
we get
\begin{equation*} 
    \begin{aligned}
&\frac{d}{dt}\inner{\comi t^{\frac{7-\delta}{2}}\norm{\partial_x^m\partial_y\mathcal{U}}^2_{L^2_{\mu}}}+ \delta\comi t^{\frac{7-\delta}{2}}\norm{\partial_x^m\partial_y^2\mathcal{U}}^2_{L^2_{\mu}}\leq \comi t^{\frac{5-\delta}{2}}\norm{\partial_x^m\partial_y\mathcal{U}}^2_{L^2_{\mu}}\\
&\qquad  + 2\comi t^{\frac{7-\delta}{2}}\inner{\partial_x^{m+1}\partial_y\mathcal{F},\ \partial_x^m \partial_y\mathcal{U}}_{L^2_{\mu}}+  2\comi t^{\frac{7-\delta}{2}} \sum_{1\leq j\leq 2}\norm{R_{m,j}}_{L^2_{\mu}}\norm{\partial_x^m\partial_y^2\mathcal{U}}_{L^2_{\mu}}.
    \end{aligned}
\end{equation*}
For the last term on the right-hand side,  we use   \eqref{rm} and Lemma \ref{lema} as well as definition \eqref{ddelta} of $\mathcal D_\delta,$ to conclude that, for each $m\leq 5,$    
\begin{align*}
	\comi t^{\frac{7-\delta}{2}} \sum_{1\leq j\leq 2}\norm{R_{m,j}}_{L^2_{\mu}}\norm{\partial_x^m\partial_y^2\mathcal{U}}_{L^2_{\mu}}&\leq C\varepsilon \comi t^{\frac{7-\delta}{2}}    \comi t^{-\frac{1}{2}}\norm{(\partial_y\mathcal{U},\partial_y\mathcal{F})}_{H^{5,0}_{\mu}}  \norm{\partial_y^2\mathcal{U}}_{H^{5,0}_{\mu}}\\
	&\leq C\varepsilon\comi t^{\frac{7-\delta}{2}} \norm{(\partial_y^2\mathcal{U}, \partial_y^2\mathcal{F})}_{H^{5,0}_{\mu}}^2 \leq C\varepsilon \delta^{-2}\mathcal D_\delta.
\end{align*}
Combining the two  estimates above, we have, for any $0\leq m\leq 5,$
\begin{multline*}
	\frac{d}{dt}\inner{\comi t^{\frac{7-\delta}{2}}\norm{\partial_x^m\partial_y\mathcal{U}}^2_{L^2_{\mu}}}+ \delta\comi t^{\frac{7-\delta}{2}}\norm{\partial_x^m\partial_y^2\mathcal{U}}^2_{L^2_{\mu}}\\
	\leq \comi t^{\frac{5-\delta}{2}}\norm{\partial_x^m\partial_y\mathcal{U}}^2_{L^2_{\mu}}
+  2\comi t^{\frac{7-\delta}{2}}\inner{\partial_x^{m+1}\partial_y\mathcal{F},\ \partial_x^m \partial_y\mathcal{U}}_{L^2_{\mu}}+C\varepsilon  \delta^{-2} \mathcal D_\delta.
\end{multline*}
Similarly,  for any $0\leq m\leq 5$
it holds that
\begin{multline*}
	\frac{d}{dt}\inner{\comi t^{\frac{7-\delta}{2}}\norm{\partial_x^m\partial_y\mathcal{F}}^2_{L^2_{\mu}}}+ \delta\comi t^{\frac{7-\delta}{2}}\norm{\partial_x^m\partial_y^2\mathcal{F}}^2_{L^2_{\mu}}\\
	\leq \comi t^{\frac{5-\delta}{2}}\norm{\partial_x^m\partial_y\mathcal{F}}^2_{L^2_{\mu}}
+  2\comi t^{\frac{7-\delta}{2}}\inner{\partial_x^{m+1}\partial_y\mathcal{U},\ \partial_x^m \partial_y\mathcal{F}}_{L^2_{\mu}}+C\varepsilon  \delta^{- 2} \mathcal D_\delta.
\end{multline*}
As a result, we   combine the above two estimates  with the fact that
\begin{equation*}
	\inner{\partial_x^{m+1}\partial_y\mathcal{F},\ \partial_x^m \partial_y\mathcal{U}}_{L^2_{\mu}}+\inner{\partial_x^{m+1}\partial_y\mathcal{U},\ \partial_x^m \partial_y\mathcal{F}}_{L^2_{\mu}}=0,
\end{equation*}
to obtain that
\begin{multline*}
	\frac{d}{dt}\inner{\comi t^{\frac{7-\delta}{2}}\norm{ (\partial_y\mathcal{U}, \partial_y\mathcal{F})}^2_{H^{5,0}_{\mu}}}+ \delta\comi t^{\frac{7-\delta}{2}}\norm{(\partial_y^2\mathcal{U}, \partial_y^2\mathcal{F})}^2_{H^{5,0}_{\mu}}\\
	\leq \comi t^{\frac{5-\delta}{2}}\norm{ (\partial_y\mathcal{U},\partial_y\mathcal{F})}^2_{H^{5,0}_{\mu}}+C\varepsilon  \delta^{-2} \mathcal D_\delta.
\end{multline*}
Multiplying the above estimate by $\frac{\delta}{2}$ yields the assertion in Proposition \ref{propo4}. The proof is thus completed.
\end{proof}

\begin{proof}[Completing the proof  of Theorem \ref{thm:priori}]
We take summation of  estimates \eqref{eq4.1} and \eqref{eq4.14} and observe  the first term on the right-hand side of   \eqref{eq4.14} can be absorbed by that on  the left side of \eqref{eq4.1}; this gives 
\begin{multline*}
	\frac{d}{dt}	 \sum_{0\leq k\leq 1}\Big[\Big(\frac{\delta}{2} \Big)^{k} \comi t^{\frac{5-\delta}{2}+k} \norm{(\partial_y^k\mathcal{U},\partial_y^k\mathcal{F})}^2_{H^{5,0}_{\mu}}\Big]\\
	+\sum_{0\leq k\leq 1}\Big(\frac{\delta}{2} \Big)^{k+1} \comi t^{\frac{5-\delta}{2}+k} \norm{(\partial_y^{k+1}\mathcal{U},\partial_y^{k+1}\mathcal{F})}^2_{H^{5,0}_{\mu}}\leq C\varepsilon  \delta^{-1} \mathcal D_\delta. 
	\end{multline*}
	Moreover, by Proposition \ref{propo2},
	\begin{align*}
		 \frac{d}{dt}   \sum_{ m\leq 8}   \comi t^{\frac{1-\delta}{2}}\norm{(u_m,f_m)}^2_{L^2_{\mu}} +\sum_{ m\leq 8}  \delta\comi t^{\frac{1-\delta}{2}}\norm{(\partial_yu_m,\partial_yf_m)}^2_{L^2_{\mu}} \leq C\varepsilon\delta^{-2} \mathcal{D}_\delta.
	\end{align*}
In view of definitions \eqref{edelta} and \eqref{ddelta} of $\mathcal{E}_\delta$ and $\mathcal{D}_\delta$, we combine the above estimates  to conclude that 
\begin{align*}
 \forall \ t>0,\quad   \frac{d}{dt}\mathcal{E}_\delta(t)+ \mathcal{D}_\delta(t)\leq C\varepsilon \delta^{-2}\mathcal{D}_\delta(t).
\end{align*}
We choose $\varepsilon$ small enough such that $ C\varepsilon \delta^{-2}\leq \frac 12$. Then 
 \begin{equation*}
\forall \ t>0,\quad\mathcal E_\delta (t)+\frac{1}{2}\int^{t}_{0}\mathcal D_\delta (s)ds \leq \mathcal{E}_{\delta}(0)\leq \varepsilon^2.
\end{equation*}
  The proof of Theorem \ref{thm:priori} is thus completed.
\end{proof}

 \appendix
\section{Equations for $\mathcal{U}$ and $\mathcal{F}$}\label{appendix1} 
 This part is devoted to  deriving the  equations solved by $\mathcal{U}$ and $\mathcal F$, recalling 
\begin{equation}\label{uf}
	\mathcal U=u- \frac{y}{2\comi t}\int^{+\infty}_yud\tilde y \ \textrm{ and }\ \mathcal F=f- \frac{y}{2\comi t}\int^{+\infty}_yfd\tilde y.
\end{equation}  
To do so,  
integrating on $[y,+\infty)$ with respect to $y$ for the first equation in \eqref{eq2}, we get
 \begin{align*}
    \inner{\partial_t-\partial_y^2}\int^{+\infty}_yud\tilde y-\partial_x\int^{+\infty}_y f d\tilde y=\int^{+\infty}_y\Big[\big(f \partial_x+g\partial_y\big) f  - (u\partial_x+v\partial_y)u \Big]d\tilde y.
 \end{align*}
Multiplying the above equation by $\frac{y}{2\comi t}$ and observing that
 \begin{align*}
    & \frac{y}{2\comi t}\inner{\partial_t-\partial_y^2}\int^{+\infty}_yud\tilde y\\
    &\qquad\qquad =\inner{\partial_t-\partial_y^2}\inner{\frac{y}{2\comi t}\int^{+\infty}_yud\tilde y}+
  \underbrace{  \frac{y}{2\comi t^2}\int^{+\infty}_yud\tilde y-\frac{1}{\comi t}u}_{=-\frac{1}{\comi t}\mathcal U},
 \end{align*}
we get 
 \begin{multline*}
     \inner{\partial_t-\partial_y^2}\inner{\frac{y}{2\comi t}\int^{+\infty}_yud\tilde y}-\partial_x\inner{\frac{y}{2\comi t}\int^{+\infty}_y f d\tilde y}-\frac{1}{\comi t}\mathcal{U}\\=\frac{y}{2\comi t}\int^{+\infty}_y\Big[\big(f\partial_x+g\partial_y\big) f  - (u\partial_x+v\partial_y)u\Big] d\tilde y.
 \end{multline*}
  Then in view of \eqref{uf},  we subtract the above equation by  the first equation in \eqref{eq2}   to get
 \begin{multline*}
       (\partial_t-\partial_y^2)\mathcal U+\frac{1}{\comi t} \mathcal{U}-\partial_x \mathcal{F}=-(u\partial_x+v\partial_y)u+\big(f\partial_x+g\partial_y\big) f \\
        +\frac{y}{2\comi t}\int^{+\infty}_y\Big[(u\partial_x+v\partial_y)u - \big(f \partial_x+g\partial_y\big) f \Big]d\tilde y.
\end{multline*}
In the same way, we have 
\begin{multline*}
	 (\partial_t-\partial_y^2)\mathcal F+\frac{1}{\comi t} \mathcal{F}-\partial_x \mathcal{U}=-(u\partial_x+v\partial_y)  f+(f\partial_x+g\partial_y)u\\
        +\frac{y}{2\comi t}\int^{+\infty}_y\Big[(u\partial_x+v\partial_y) f  - (f\partial_x+g\partial_y)u\Big]d\tilde y.
\end{multline*}
 
 \section{Equations for $u_m$ and $f_m$}\label{appumfm}
 
 We begin with the evolution  equation solved by $g$. In view of \eqref{zeromean}, we may write
 \begin{equation*}
 	g=-\int_0^y \partial_x f d\tilde y =\int^{\infty}_{y}\partial_x f  d\tilde y\ \textrm{ and }\ v=-\int_0^y \partial_x u d\tilde y =\int^{\infty}_{y}\partial_x u  d\tilde y.
 \end{equation*}
 Using integration by part yields  
 \begin{align*}
 	 \int_y^{+\infty} v\partial_yfd\tilde y=\int_y^{+\infty} (\partial_xu )fd\tilde y-vf,  \  \int_y^{+\infty} g\partial_y ud\tilde y=\int_y^{+\infty} (\partial_xf )ud\tilde y-gu.
 \end{align*}
 Then integrating   the second equation in \eqref{eq2} over $(y, +\infty)$ with respect to the normal variable, we have
\begin{align*}
    \partial_t\int^{\infty}_{y} f (\tilde y)d\tilde y-v f -\partial_y^2\int^{\infty}_{y} f (\tilde y)d\tilde y=\partial_x\int^{\infty}_{y}u(\tilde y)d\tilde y-gu.
\end{align*}
Applying $\partial_x$ to the above equation yields
\begin{align*}
       \partial_tg-(\partial_xv) f -v\partial_x f -\partial_y^2g=\partial_xv-(\partial_xg)u-g\partial_xu. 
\end{align*}
This with the divergence-free condition  yields
\begin{align}\label{dix1}
        \partial_tg+u\partial_xg+v\partial_yg-\partial_y^2g=(1+f)\partial_xv-g\partial_xu.    
\end{align} 
Applying $\partial_x^{m-1}$ to equation \eqref{dix1}   and using divergence conditions again, we have 
\begin{equation}\label{m-1g}
	\begin{aligned}
	&\inner{\partial_t+u\partial_x+v\partial_y-\partial_y^2}\partial_x^{m-1}g= (1+f)\partial_x^mv-g\partial_x^mu+I_m, 
	\end{aligned}
\end{equation}
where
\begin{multline*}
	I_m=\sum^{m-1}_{k=1}\binom{m-1}{k}\left[(\partial_x^k f )\partial_x^{m-k}v-(\partial_x^kg)\partial_x^{m-k}u \right]\\
	-\sum^{m-1}_{k=1}\binom{m-1}{k}\left[ (\partial_x^ku)\partial_x^{m-k}g-(\partial_x^kv)\partial_x^{m-k} f \right].
\end{multline*}
On the other hand, we apply $\partial_x^m$ to first second equations in \eqref{eq2} to obtain
\begin{multline}\label{xmu}
	  \inner{\partial_t+u\partial_x+v\partial_y-\partial_y^2}\partial_x^mu-\inner{(1+f)\partial_x+g\partial_y}\partial_x^m f \\ =-(\partial_x^mv)\partial_yu+(\partial_x^mg)\partial_y f +J_m 
	  	\end{multline} 
with
\begin{multline*}
	J_m=\sum^m_{k=1}\binom{m}{k}\left[(\partial_x^k f )\partial_x^{m-k+1} f -(\partial_x^ku)\partial_x^{m-k+1}u\right]\\
  +\sum^{m-1}_{k=1}\binom{m}{k}\left[(\partial_x^kg)\partial_x^{m-k}\partial_y f -(\partial_x^kv)\partial_x^{m-k}\partial_yu\right].
\end{multline*}
Multiplying  equation   \eqref{m-1g} by $\frac{\partial_yu}{1+f}$ and taking summation with the  equation \eqref{xmu} for $\partial_x^m u$, we obtain that, recalling $u_m=\partial_x^mu+\frac{\partial_yu}{1+f}\partial_x^{m-1}g, $
\begin{equation}\label{umxmf}
\begin{aligned}
   &\inner{\partial_t+u\partial_x+v\partial_y-\partial_y^2}u_m-\inner{(1+f)\partial_x+g\partial_y}\partial_x^m f  \\
   &\qquad =(\partial_x^mg)\partial_y f +J_m-\frac{g\partial_yu}{1+f}\partial_x^mu+\frac{\partial_yu}{1+f} I_m+K_m, 
   \end{aligned}
\end{equation}
where, denoting by $[P,Q]$ the commutator between two operators $P$ and $Q,$
 \begin{equation*}
 	\begin{aligned}
 	K_m&=\Big[\inner{\partial_t+u\partial_x+v\partial_y-\partial_y^2},\  \frac{\partial_yu}{1+f}\Big ]\partial_x^{m-1}g\\
 	&=\left[\partial_x\partial_y f +\frac{g\partial_y^2 f }{1+f} \right]\partial_x^{m-1}g-\left[ \frac{(\partial_yu)\partial_xu}{1+f}+\frac{g(\partial_yu)^2}{(1+f)^2}\right]\partial_x^{m-1}g\\
 	&\quad+\left[2\frac{(\partial_y f )\partial_y^2u}{(1+f)^2}-2\frac{(\partial_y f )^2\partial_yu}{(1+f)^3}\right]\partial_x^{m-1}g+2\inner{\partial_y\frac{\partial_yu}{1+f}}\partial_x^m f. 	
 	\end{aligned}
 \end{equation*}
 Moreover,  recalling $f_m=\partial_x^mf+\frac{\partial_yf}{1+f}\partial_x^{m-1}g, $
 \begin{align*}
    &-\big ((1+f)\partial_x+g\partial_y\big )\partial_x^m f = -\big ((1+f)\partial_x+g\partial_y\big )\Big(f_m-\frac{\partial_y f }{1+f}\partial_x^{m-1}g\Big)\\
    &\qquad= -\big ((1+f)\partial_x+g\partial_y\big )f_m+(\partial_y f )\partial_x^mg-\frac{g\partial_y f }{1+f}\partial_x^m f \\
    &\qquad\quad+\left[\partial_x\partial_y f +\frac{g\partial_y^2 f }{1+f} \right]\partial_x^{m-1}g-\left[ \frac{(\partial_y f )\partial_x f }{1+f}+\frac{g(\partial_y f )^2}{(1+f)^2}\right]\partial_x^{m-1}g.
\end{align*}
Substituting the above equation into \eqref{umxmf} and observing the term $(\partial_x^mg)\partial_y f$ therein is cancelled out, we have
\begin{equation*}
	\begin{aligned}
	&\inner{\partial_t+u\partial_x+v\partial_y-\partial_y^2}u_m-\inner{(1+f)\partial_x+g\partial_y}f_m\\
	&\quad=\frac{g\partial_y f }{1+f}\partial_x^m f -\frac{g\partial_yu}{1+f}\partial_x^mu+J_m+\frac{\partial_yu}{1+f} I_m	+\widetilde K_m,
	\end{aligned}
\end{equation*}
 where
 \begin{equation*}
 \begin{aligned}
 	\widetilde K_m=&\left[ \frac{(\partial_y f )\partial_x f }{1+f}+\frac{g(\partial_y f )^2}{(1+f) ^2}\right]\partial_x^{m-1}g-\left[ \frac{(\partial_yu)\partial_xu}{1+f}+\frac{g(\partial_yu)^2}{(1+f)^2}\right]\partial_x^{m-1}g\\
 &+	\left[2\frac{(\partial_y f )\partial_y^2u}{(1+f)^2}-2\frac{(\partial_y f )^2\partial_yu}{(1+f)^3}\right]\partial_x^{m-1}g+2\inner{\partial_y\frac{\partial_yu}{1+f}}\partial_x^m f .
 	\end{aligned}
 \end{equation*}
 Equivalently, 
 \begin{equation*}  	\inner{\partial_t+u\partial_x+v\partial_y-\partial_y^2}u_m-\inner{(1+f)\partial_x+g\partial_y}f_m=\sum_{1\leq j\leq 3} S_{m,j}
 \end{equation*}
 with
 \begin{equation}\label{sm1}
 \begin{aligned}
 	S_{m,1}=J_m=& \sum^m_{k=1}\binom{m}{k}\left[(\partial_x^k f )\partial_x^{m-k+1} f -(\partial_x^ku)\partial_x^{m-k+1}u\right]\\
  &+\sum^{m-1}_{k=1}\binom{m}{k}\left[(\partial_x^kg)\partial_x^{m-k}\partial_y f -(\partial_x^kv)\partial_x^{m-k}\partial_yu\right],
 \end{aligned}
 \end{equation}
 and
 \begin{equation}\label{sm2}
 \begin{aligned}
 	S_{m,2}=\frac{\partial_yu}{1+f} I_m	=&\frac{\partial_yu}{1+f} \sum^{m-1}_{k=1}\binom{m-1}{k}\left[(\partial_x^k f )\partial_x^{m-k}v-(\partial_x^kg)\partial_x^{m-k}u \right]\\
	&-\frac{\partial_yu}{1+f}\sum^{m-1}_{k=1}\binom{m-1}{k} \left[ (\partial_x^ku)\partial_x^{m-k}g-(\partial_x^kv)\partial_x^{m-k} f \right],
 \end{aligned}
 \end{equation}
 and
  \begin{equation}\label{sm3}
 \begin{aligned}
 	S_{m,3}&=\frac{g\partial_y f }{1+f}\partial_x^m f -\frac{g\partial_yu}{1+f}\partial_x^mu+\widetilde K_m\\
	&= \left[\frac{g\partial_y f }{1+f}+2\partial_y\inner{\frac{\partial_yu}{1+f}}\right]\partial_x^m f -\frac{g\partial_yu}{1+f}\partial_x^mu-2\frac{(\partial_y f )^2\partial_yu}{(1+f)^3} \partial_x^{m-1}g\\
	&\quad+\left[ \frac{(\partial_y f )\partial_x f -(\partial_yu)\partial_xu}{1+f}+\frac{g(\partial_y f )^2-g(\partial_yu)^2+2(\partial_y f )\partial_y^2u}{(1+f) ^2}\right]\partial_x^{m-1}g.
 \end{aligned}
 \end{equation}
By similar computation to above, we have that
  \begin{equation*} 
 	\inner{\partial_t+u\partial_x+v\partial_y-\partial_y^2}f_m-\inner{(1+f)\partial_x+g\partial_y}u_m=\sum_{1\leq j\leq 3} \tilde S_{m,j}
 \end{equation*}
 with 
  \begin{equation}\label{sm1t}
 \begin{aligned}
 	\tilde S_{m,1}= & \sum^m_{k=1}\binom{m}{k}\left[(\partial_x^k f )\partial_x^{m-k+1}u-(\partial_x^ku)\partial_x^{m-k+1}f\right]\\
  &+\sum^{m-1}_{k=1}\binom{m}{k}\left[(\partial_x^kg)\partial_x^{m-k}\partial_yu-(\partial_x^kv)\partial_x^{m-k}\partial_yf\right],
 \end{aligned}
 \end{equation}
 \begin{equation}\label{sm2t}
 \begin{aligned}
 	\tilde S_{m,2}= & \sum^{m-1}_{k=1}\binom{m-1}{k}\frac{\partial_yf}{1+f}\left[(\partial_x^k f )\partial_x^{m-k}v-(\partial_x^kg)\partial_x^{m-k}u \right]\\
	&-\sum^{m-1}_{k=1}\binom{m-1}{k}\frac{\partial_yf}{1+f}\left[ (\partial_x^ku)\partial_x^{m-k}g-(\partial_x^kv)\partial_x^{m-k} f \right],
 \end{aligned}
 \end{equation}
 and
  \begin{equation}\label{sm3t}
 \begin{aligned}
 	\tilde S_{m,3}& = \left[\frac{g\partial_yu}{1+f}+2\partial_y\inner{\frac{\partial_yf}{1+f}}\right]\partial_x^m f -\frac{g\partial_yf}{1+f}\partial_x^mu\\
	&\quad+\left[ \frac{ (\partial_xu)\partial_y f -(\partial_yu)\partial_xf}{1+f}+\frac{ 2(\partial_y f )\partial_y^2f}{(1+f) ^2}- \frac{2(\partial_y f )^3}{(1+f)^3}  \right]\partial_x^{m-1}g.
 \end{aligned}
 \end{equation}
 
 \bigskip
  \noindent{\bf Acknowledgements}.
The work was supported by NSF of China (Nos. 12325108,  12131017, 12221001),  and  the Natural Science Foundation of Hubei Province (No. 2019CFA007).

	

\end{document}